\documentclass{amsproc}

\usepackage{amsmath}
\usepackage{graphicx}
\graphicspath{{./figs/}} 
\usepackage{lipsum}
\usepackage{amsfonts}
\usepackage{graphicx}
\usepackage{algorithm}
\usepackage{algpseudocode}
\usepackage{hyperref}

\theoremstyle{thmstyletwo}%
\newtheorem{theorem}{Theorem}
\newtheorem{proposition}[theorem]{Proposition}%
\newtheorem{corollary}[theorem]{Corollary}

\newtheorem{assumption}{Assumption}
\newtheorem{lemma}{Lemma}

\numberwithin{equation}{section}

\begin{document}


\title[Algorithmic framework for interface related optimization]{A modularized algorithmic framework for interface related optimization problems using characteristic functions}

\author{Dong Wang}
\address{School of Science and Engineering, The Chinese University of Hong Kong, Shenzhen \& Shenzhen International Center for Industrial and Applied Mathematics, Shenzhen Research Institute of Big Data, Guangdong, China}
\curraddr{}
\email{wangdong@cuhk.edu.cn}
\thanks{}

\author{Shangzhi Zeng}
\address{Department of Mathematics and Statistics, University of Victoria, Victoria, BC, Canada}
\curraddr{}
\email{zengshangzhi@uvic.ca}
\thanks{}

\author{Jin Zhang}
\address{Department of Mathematics, SUSTech International Center for Mathematics, Southern University of Science and Technology, National Center for Applied Mathematics Shenzhen, Guangdong, China}
\curraddr{}
\email{zhangj9@sustech.edu.cn}
\thanks{Corresponding author: Jin Zhang, Department of Mathematics, SUSTech International Center for Mathematics, Southern University of Science and Technology, National Center for Applied Mathematics Shenzhen, Guangdong, China ({zhangj9@sustech.edu.cn}).}

\begin{abstract}

{In this paper, we consider the algorithms and convergence for a general optimization problem, which has a wide range of applications in image segmentation, topology optimization, flow network formulation, and surface reconstruction. In particular, the problem focuses on interface related optimization problems where the interface is implicitly described by characteristic functions of the corresponding domains. Under such representation and discretization, the problem is then formulated into a discretized optimization problem where the objective function is concave with respect to characteristic functions and convex with respect to state variables. We show that under such structure, the iterative scheme based on alternative minimization can converge to a local minimizer. Extensive numerical examples are performed to support the theory.}
\end{abstract}

\keywords{Interface problems; thresholding; characteristic function; convergence analysis}

\maketitle

\section{Introduction}
     
Interface related optimization problem is a fundamental problem in many applications, including problems in material science \cite{Chen_2002}, image processing \cite{Chan_2001}, topology optimization \cite{Sigmund_2013}, surface reconstruction \cite{Hong_Kai_Zhao} and so on. A lot of numerical approaches have been developed to solve such problems, including front track based methods \cite{glimm2003conservative,Unverdi_1992}, phase-field based methods \cite{Chen_2002,du2019maximum,qiao2021two,shen2018convergence}, level set based methods \cite{Osher_1988}, parametric finite element based methods \cite{li2020convergence}, two-stage thresholding \cite{cai2013two,chan2014two}, Centroidal Voronoi Tessellations (CVT) based methods \cite{du2006convergence,du1999centroidal,liu2011fast}, primal dual methods \cite{bae2011global,boykov2004experimental,yuan2010continuous,wan2012reconstructing}, and many others. Usually, solving such problems includes three ingredients: 1. representation of the interface (implicit or explicit), 2. approximation of the objective functional under the representation, and 3. approaches to minimize the approximate objective functional. In particular, the representation of the interface is the most fundamental part of a model or a method for interface related optimization problems. 

This paper focuses on a wide class of approximate interface related optimization problems where the interface is implicitly represented by indicator functions of corresponding domains. It is motivated by the MBO method for approximating mean curvature flow using indicator functions \cite{barles1995simple,merriman1992diffusion}.  Esedoglu and Otto \cite{esedoglu2015threshold}  then develop a novel interpretation using minimizing movement and generalize this type of method to multiphase flow with arbitrary surface tensions. The method has subsequently been extended to deal with many other applications, including 
image processing \cite{esedog2006threshold,Ma_2021,merkurjev2013mbo,wang2016efficient}, 
problems of anisotropic interface motions \cite{Elsey_2017,esedoglu2017convolution,merriman2000convolution}, 
the wetting problem on solid surfaces \cite{lu2021efficient,WWX2018,xu2016efficient,ying2021adaptive}, convex object segmentation \cite{luo2023binary,luo2020convex}, and so on. The method can also be considered as piecewise constant level set methods \cite{jiang2022topology,tai2007image,tai2023potts,wei2009piecewise,zhang2018approach}. 

Recently, based on Esedoglu and Otto's novel interpretation, in \cite{wang2016efficient,wang2019iterative}, the authors develop an efficient iterative convolution thresholding method (ICTM) for image segmentation and extend into topology optimization problems \cite{chen2018efficient,hu2022} and surface reconstructions from point clouds \cite{Wang_2021}. In general, the problem could be formulated into 
\begin{align}
\min_{\Theta_i \in \mathcal S, \Omega_i} \mathcal{E} \colon = \sum_{i=1}^n \int_{\Omega_i} F_i(\Theta_1, \ldots, \Theta_n) \ dx +  \sum_{i=1}^n \lambda_i |\partial \Omega_i| \label{EnergyGeneral}
\end{align}
Here, $F_i$ are usually fidelity terms, $\Theta_i = (\Theta_{i,1}, \Theta_{i,2}, \ldots, \Theta_{i,m})$ contains all possible parameters in fidelity terms,  $\Omega = \cup_{i=1}^n \Omega_i$,  $\mathcal{S} =\mathcal{S}_1 \times\mathcal{S}_2\times \ldots\times \mathcal{S}_n \cap  \mathcal{S}_1^{o} \times\mathcal{S}_2^{o} \times \ldots\times \mathcal{S}_n^{o}  $ as the admissible set of $\Theta = (\Theta_1,\Theta_2, \ldots, \Theta_n)$, $\mathcal{S}_i \cap \mathcal{S}_i^{o} $ as the admissible set of $\Theta_i$ where $\mathcal{S}_i^{o}$ is the admissible set for  satisfying some constraints of $\Theta_i$ which are dependent on  the partition of $\Omega = \cup_{i=1}^n \Omega_i$ and $\mathcal{S}_i$ is the admissible set for satisfying some constraints of $\Theta_i$ which are independent on  the partition, and $\lambda_i$ are fixed parameters. 

Denote $u_i$ to be indicator functions of $\Omega_i$ ($i \in [n]$), as pointed out in  \cite{esedoglu2015threshold}, when $\tau \ll 1$, the measure of $\partial \Omega_i$ can be approximated by
\begin{align}
|\partial \Omega_i|\approx \sqrt{\frac{\pi}{\tau}} \sum_{j\in[n], j \neq i}\int_{\Omega} u_i G_{\tau} *u_j \ dx, \label{approxPeri}
\end{align}
where $*$ represents convolution and $G_\tau$ is
\begin{align}\label{kernel}
G_{\tau}(x)=\frac{1}{4\pi\tau} \exp(-\frac{|x|^2}{4\tau}).
\end{align}

Then, problem~\eqref{EnergyGeneral} can be approximately written as
\begin{align}
\min_{\Theta_i \in \mathcal S, u_i \in \mathcal{B} } \mathcal{E}^\tau \colon = \sum_{i=1}^n \int_{\Omega} u_i F_i(\Theta_1, \ldots, \Theta_n) \ dx + \sqrt{\frac{\pi}{\tau}}  \sum_{i=1}^n \lambda_i  \sum_{j\in[n], j \neq i}\int_{\Omega} u_i G_{\tau} *u_j \ dx\label{EnergyApprox}
\end{align}
where \[\mathcal{B} \colon = \{u\in BV(\Omega,\mathbb{R}) \ |  \ u =\{0,1\} \}\]
and $BV(\Omega,\mathbb{R})$ denotes the bounded variation functional space.

We note that special cases of problem~\eqref{EnergyApprox} include but not limited to the approximate models for Chan-Vese model for image segmentation \cite{Chan_2001}, local binary fitting model (LBF) for intensity inhomogeneous image segmentation \cite{Li_2007}, local statistical active contour model \cite{Zhang_2016}, topology optimization for Stokes flow \cite{Borrvall_2002}, formulation of biological flow networks \cite{Hu_2013,hu2022}, composite materials \cite{Cherkaev_2011}, Dirichlet partition problems \cite{wang2022efficient}, and surface reconstruction from point clouds \cite{Wang_2021}.
In all these cases, $F_i$ is usually taken to be convex in the admissible set $\mathcal S$ with respect to $\Theta$.

It is not difficult to check that the second term in \eqref{EnergyApprox} is strictly concave by the following direct calculation using the fact that $G_\tau*$ is a self adjoint operator :
\begin{align*}
\sum_{j\in[n], j \neq i}\int_{\Omega} u_i G_{\tau} *u_j \ dx  =  \int_{\Omega} u_i G_{\tau} *(1-u_i) \ dx =  \int_{\Omega} (G_{\tau/2} *u_i) G_{\tau/2} *(1-u_i) \ dx.
\end{align*}

Obeying the fact that either images are defined on discretized pixels or computational domains are discretized for fluid problems, we thus arrive at the following optimization problem in a compact form

\begin{equation}\label{prb2}
	\min_{u \in {C}, \theta \in \mathcal S } \Phi(u,\theta) := h(\theta) + \sum_i \sum_j u_{i,j}{f_{i,j}(\theta)} + g(u),
\end{equation}
where $h$ is a convex (can be nonsmooth) function, $f_{i,j}$ are convex functions with respect to $\theta$ for each $i$ and $j$, $h$ and $f_{i,j}$ are continuous on $\mathcal S$, $\mathcal S$ is the admissible set for $\theta$, $g$ is a strictly concave function and $C$ is the admissible set for $u:= (u_1;u_2; \ldots; u_n)\in \mathbb{R}^{n \times p}$;
\begin{align}
C := \{ u ~|~ u_i = (u_{i,1},u_{i,2},\cdots, u_{i,p})  \in \{0,1\}^p, \,\, \sum_{i=1}^n u_{i,j} = 1,  \ \  \forall \ j \in [p]\}.
\end{align}
In particular, $n$ (usually $n\geq 2$) denotes the number of phases in different applications (for example, number segments in image segmentation or number of materials in optimal composite of materials) and $p$ usually describes the degree of freedom for problems after discretization. In addition, summation equals $1$ constraint implies that at each fixed point (say point $j$) of discretization, only one index $i$ such that $u_{i,j}$ takes $1$ and all others take $0$.

Existing ICTM  for \eqref{prb2} usually achieves stable solutions in a couple of steps, which is empirically observed in the literature; see  \cite{chen2018efficient,wang2016efficient,wang2019iterative}. However, theoretical guarantee of convergence towards solutions with ``good" qualities is still lacking. In response to this limitation, 
in this paper, we develop a convex relaxation based alternating minimization 
 algorithmic framework for solving \eqref{prb2}. 
We justify the validity of our scheme by that the relaxation yields exact solutions.
This new framework is
flexible and modularized to handle a
broad class of application-driven problems in the form of \eqref{prb2}.
We then ask, is it possible to achieve stationary or even local solutions? We
provide a positive answer to this question by separating our discussion for different embedded modules.

In this paper, we give a theoretical and systematical study on the convergence of the ICTM. 
The convergence to a local minimizer is rigorously proved  in accordance with three practical situations. Among existing literature of interface related optimization problems, only energy decaying property and the convergence of the value of objective function have been studied. However, there is no study on the convergence of the generated sequence itself.  As for related interface motion problems where only the perimeter term is minimized, existing work mainly considers the convergence on the dynamics when the approximation parameter ({\it i.e.}, $\tau$ in \eqref{approxPeri}) goes to $0$; see \cite{laux2016convergence2}.  Indeed, individual treatments on the three situations enable us to take advantage of their special
algorithmic structures more effectively and thus to derive some new results.  This is
a striking feature of our study that leads to 
 a big difference between our convergence analysis and others. To our knowledge, this is the first and systematical convergence result of the ICTM for interface related optimization problems.

The paper is organized as follows. The detail on the convergence is introduced in Section~\ref{sec:conv} and numerical experiments for different applications are performed in Section~\ref{sec:exp}. We draw some conclusion and discussions in Section~\ref{sec:con}.

\section{Algorithmic framework}\label{sec:alg}

The nonconvex optimization problem \eqref{prb2}  is in general 
computationally challenging. The difficulty is mainly due to the binary variables. 
To develop efficient solution scheme for solving problem \eqref{prb2}, we first include a convex relaxation of the binary constraint:
\begin{equation}\label{prb3}
	\min_{u \in {\hat{C}}, \theta \in \mathcal S } \Phi(u,\theta) := h(\theta) + \sum_i \sum_j u_{i,j}{f_{i,j}(\theta)} + g(u),
\end{equation}
where $\mathcal S$ is a closed convex set and $\hat C$ is the convex hull of the binary constraint $C$,
\[\hat C := \{ u ~|~ u_i = (u_{i,1},u_{i,2},\cdots, u_{i,n})  \in [0,1]^p,\ \  \sum_{i=1}^n u_{i,j} = 1,  \forall \ j \in [p]\}.\] 

Before we consider continuous optimization algorithms for solving problem \eqref{prb3},  it is hoped that the convex relaxation yields exact solution. Thanks to the strict concavity of $g$,  the following results exhibit the relaxation exactness
by showing that any relaxed local solution maps to
a true solution.

\begin{lemma}\label{lem_r}
	For any $\bar{\theta} \in \mathcal S$, let $\bar{u}$ be a local minimum of problem $$\underset{u \in \hat{C}}{\mathrm{argmin}} ~ \Phi (u,\bar{\theta}),$$ that is, there exists $\epsilon > 0$ such that
	\[
	\Phi (\bar{u},\bar{\theta}) \le \Phi (u,\bar{\theta}), \quad \forall u \in \hat{C} \cap \mathbb{B}_{\epsilon}(\bar{u}),
	\]
	where $\mathbb{B}_{\epsilon}(\bar{u}) := \{ u ~|~ \|u - \bar{u} \| \le \epsilon \}$.
	Then $\bar{u} \in C$.
\end{lemma}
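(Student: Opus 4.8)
The plan is to exploit the strict concavity of $g$ together with the polyhedral structure of $\hat{C}$. Fix $\bar{\theta}\in\mathcal{S}$. For fixed $\theta=\bar\theta$, the map $u\mapsto\Phi(u,\bar\theta)$ differs from $g(u)$ only by the affine term $\sum_{i,j}u_{i,j}f_{i,j}(\bar\theta)$ (the term $h(\bar\theta)$ is constant), so $u\mapsto\Phi(u,\bar\theta)$ is itself strictly concave on $\hat{C}$. The core fact I would invoke is the standard one that a strictly concave function on a convex set cannot attain a local minimum at a point that is a proper (strict) convex combination of two distinct feasible points; a local minimum of a concave function over a convex set is necessarily an extreme point of that set, at least locally. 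Therefore the plan reduces to two steps: (i) show $\bar{u}$ must be an extreme point of $\hat{C}$ in the relevant local sense, and (ii) identify the extreme points of $\hat{C}$ with the binary set $C$.

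For step (i), I would argue by contradiction. Suppose $\bar{u}\notin C$; then some coordinate $\bar{u}_{i_0,j_0}\in(0,1)$, and because of the simplex constraint $\sum_i u_{i,j_0}=1$ there is a second index $i_1$ with $\bar{u}_{i_1,j_0}\in(0,1)$ as well. Consider the perturbation direction $d$ supported on the $j_0$-th block that increases coordinate $i_0$ and decreases coordinate $i_1$ (and is zero elsewhere); then $\bar{u}\pm td\in\hat{C}$ for all sufficiently small $t>0$, and both points lie in $\mathbb{B}_{\epsilon}(\bar{u})$ once $t$ is small. Writing $\phi(t):=\Phi(\bar{u}+td,\bar\theta)$, strict concavity of $\Phi(\cdot,\bar\theta)$ along the segment through $\bar{u}$ in direction $d$ gives $\phi(0)>\tfrac12\phi(t)+\tfrac12\phi(-t)$ for $t$ small, hence $\min\{\phi(t),\phi(-t)\}<\phi(0)=\Phi(\bar{u},\bar\theta)$, contradicting local minimality. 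One mild technical point: I must make sure $d\neq 0$ is a genuine feasible direction in both $\pm$ senses, which is exactly guaranteed by having two coordinates strictly inside $(0,1)$ in the same block — so it is important to phrase the argument around a fractional block rather than a single fractional coordinate.

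The main obstacle, such as it is, is ensuring the strict concavity is actually usable along every such segment. "Strictly concave" as stated means strict inequality for distinct points, and since $\bar u + td$ and $\bar u - td$ are distinct for $t>0$, strict concavity applied to the midpoint $\bar u=\tfrac12(\bar u+td)+\tfrac12(\bar u-td)$ delivers the strict inequality directly; no differentiability of $g$ is needed, which is worth keeping since the paper only assumes $g$ strictly concave (not smooth). A secondary point is the harmless bookkeeping that the linear term and the constant $h(\bar\theta)$ do not destroy strict concavity — linear perturbations preserve strict concavity — so $\Phi(\cdot,\bar\theta)$ inherits it from $g$. Putting these together: no fractional block can occur at a local minimizer, so every block of $\bar{u}$ is a $0$–$1$ vector summing to $1$, i.e. $\bar{u}\in C$, which is the claim. (Step (ii), identifying $C$ with the extreme points of $\hat C$, is then immediate and in fact subsumed by the contradiction argument above, so it need not be carried out separately.)
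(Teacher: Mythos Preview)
Your proposal is correct and follows essentially the same approach as the paper: argue by contradiction, locate a block $j_0$ with two fractional coordinates, perturb along the feasible direction that trades mass between those two coordinates, and use strict concavity of $\Phi(\cdot,\bar\theta)$ along that line to contradict local minimality. Your write-up is in fact slightly more careful than the paper's in noting explicitly that strict concavity of $\Phi(\cdot,\bar\theta)$ is inherited from $g$ since the remaining terms are affine in $u$, and that no differentiability of $g$ is required.
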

\begin{proof} We prove the result for by contradiction.
	Assume that there exist $i$ and $j$ such that $\bar{u}_{i,j} \notin \{0,1\}$, then the set $$A := \{j ~|~ \bar{u}_{i,j} \notin \{0,1\}~\text{for some}~i \}$$ is nonempty.
	Since $\sum_i \bar{u}_{i,j} = 1$, there exist $\bar{j} \in A$ and $i_1, i_2 \in \{1,\ldots,n\}$ such that $\bar{u}_{i_1,\bar{j}}, \bar{u}_{i_2,\bar{j}} \in (0,1)$. And we can find $c_0 \in (0,\frac{1}{2})$ such that $$c_0 \le \bar{u}_{i_1,\bar{j}}, \ \ \bar{u}_{i_2,\bar{j}}  \le 1-c_0.$$
	
	Define $u(t) = (u_1(t); u_2(t); \ldots; u_n(t))$, where 
	$$u_{i_1,\bar{j}}(t) = \bar{u}_{i_1,\bar{j}} + t,  \ \ u_{i_2,\bar{j}}(t) = \bar{u}_{i_2,\bar{j}} - t, \ \  u_{i,j}(t) = \bar{u}_{i,j}, \ \  \textrm{if}  \ \ i \notin \{i_1, i_2\}, \ j\neq \bar{j}. $$ 
	When $-c_0 \le t \le c_0$, we have that $0 \le u_{i,j}(t) \le 1$ for all $i$ and $j$, and $ \sum_i u_{i,j}(t) = 1$ for all $j$, which yields that $u(t) \in \hat{C}$. 
	
	Define further $\varphi(t):= \Phi(u(t),\bar{\theta})$, then $\varphi(t)$ is strictly concave since $\Phi$ is strictly concave with respect to $u$. 
	Since $\bar{u}$ is a local minimum to $\min_{u \in \hat{C}} ~ \Phi (u,\bar{\theta})$, immediately $0$ is a local minimum to problem $$\min_{t \in [-c_0,c_0]} \varphi(t),$$
which contradicts to the strict concavity of $\varphi$.
\end{proof}

The relaxation exactness regarding local minimums follows immediately.
\begin{proposition}
	Let $(\bar{u}, \bar{\theta})$ be a local minimum of problem \eqref{prb3},
	then $\bar{u}_{i,j} \in \{0,1\}$ for any $i$ and $j$.
\end{proposition}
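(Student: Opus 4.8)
The plan is to obtain this as an immediate corollary of Lemma~\ref{lem_r}, by freezing the state variable $\theta$ at its optimal value. First I would unpack the hypothesis: since $(\bar{u},\bar{\theta})$ is a local minimum of \eqref{prb3}, there exists $\epsilon > 0$ such that $\Phi(\bar{u},\bar{\theta}) \le \Phi(u,\theta)$ for every feasible pair $(u,\theta) \in \hat{C} \times \mathcal{S}$ with $\|(u,\theta) - (\bar{u},\bar{\theta})\| \le \epsilon$. Restricting attention to the subfamily of competitors of the form $(u,\bar{\theta})$ with $u \in \hat{C}$ and $\|u - \bar{u}\| \le \epsilon$, I note that each such pair lies in the prescribed product-norm ball because its $\theta$-component coincides with $\bar{\theta}$; hence $\Phi(\bar{u},\bar{\theta}) \le \Phi(u,\bar{\theta})$ for all $u \in \hat{C} \cap \mathbb{B}_{\epsilon}(\bar{u})$. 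In other words, $\bar{u}$ is a local minimum of $\min_{u \in \hat{C}} \Phi(u,\bar{\theta})$ in exactly the sense required by Lemma~\ref{lem_r}.

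Next I would simply invoke Lemma~\ref{lem_r} with this particular $\bar{\theta} \in \mathcal{S}$ and the local minimizer $\bar{u}$, which yields $\bar{u} \in C$. By the definition of $C$ this is precisely the assertion that $\bar{u}_{i,j} \in \{0,1\}$ for every $i$ and every $j$, so the proof is complete.

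There is essentially no serious obstacle in this argument; the only point deserving a moment's care is the reduction from a joint local minimum to a local minimum in the $u$-variable alone, which rests on the elementary observation that perturbing only $u$ while holding $\theta = \bar{\theta}$ keeps the pair inside whatever neighborhood the joint minimality furnishes. All of the real work — the use of the strict concavity of $g$ (equivalently, of $\Phi$ in $u$) to rule out fractional components — has already been carried out in Lemma~\ref{lem_r}, and nothing further is needed here; in particular the conclusion is insensitive to the structure of $\mathcal{S}$ beyond its being the feasible set in which $\bar{\theta}$ lives.
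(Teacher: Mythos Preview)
Your proof is correct and follows the same approach as the paper, which simply states that the proposition ``follows immediately'' from Lemma~\ref{lem_r}. Your explicit reduction---fixing $\theta=\bar{\theta}$ and observing that $\bar{u}$ is then a local minimizer of the $u$-subproblem so that Lemma~\ref{lem_r} applies---is exactly what the paper has in mind.
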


We are now in the position to solve the relaxed problem \eqref{prb3}.
We present an alternating minimization algorithmic framework. That is, the algorithms are based on generating the following updating sequence starting with an initial guess $u^0$;
\[\theta^0, u^1, \theta^1, u^2, \theta^2, \cdots, u^n, \theta^n, \cdots\]
where 
\begin{align}
\theta^k = \arg\min_{\theta\in \mathcal S} \Phi(u^k,\theta) \label{subproblem1}
\end{align}
and
\begin{align}
u^{k+1} = \arg\min_{u\in \hat C} \Phi(u,\theta^k). \label{subproblem2}
\end{align}

\section{Convergence results}\label{sec:conv}

The algorithmic framewrok stated in the preceding section consists of sequentially solving subproblems (\ref{subproblem1}) and (\ref{subproblem2})  with respect to one variable when the other variable is fixed in an alternative manner. 
Naturally, for different structures of $h, f_{i,j}$, and $g$, we may utilize efficiently their structural information and hence call suitable solution schemes to tackle subproblems (\ref{subproblem1}) and (\ref{subproblem2}).  Thereby, our convergence analysis in this section mainly relates to three cases where we employ appropriate method for minimizations which aims at finding for each $k$, 
\begin{itemize}
\item [1.] a global minimizer of convex subproblem \eqref{subproblem1} and a global minimizer of subproblem \eqref{subproblem2},
\item [2.] a global minimizer of convex subproblem \eqref{subproblem1} and a local minimizer of subproblem \eqref{subproblem2},
\item [3.] an approximate stationary solution of convex subproblem \eqref{subproblem1} and a thresholding approach for subproblem  \eqref{subproblem2}.
\end{itemize}

The main convergence results for the three situations, namely {\bf Global $\theta$ $\&$ Global $u$ case, Global $\theta$ $\&$ Local $u$ case and inexact Stationay  $\theta$ $\&$ Thresholding $u$ case} are presented in Theorems \ref{thm_min}, \ref{thm_min2}, \ref{thm_conv_inexact} and \ref{thm_min3}.

\subsection{Convergence analysis for Global $\theta$ $\&$ Global $u$ case}
We first analyze the alternating minimization method on problem \eqref{prb2} in the situation where both global minimizers of subproblems \eqref{subproblem1} and \eqref{subproblem2} are accessible. The algorithm update is summarized in Algorithm~\ref{alg1}.

\begin{algorithm}[ht!]
\begin{flushleft}

 {\bf Input:} ($u^0, \theta^0$)

{\bf Output:} ($\bar u, \bar \theta$)  that approximately minimizes  \eqref{prb2} .
	
	Set $k=0$\;
	
{\bf While} the following condition fail to hold: \begin{equation}\label{stop1}
u^k \in \mathrm{argmin}_{u \in { \hat{C}}} \Phi(u,\theta^k),
\end{equation}

{
		{\bf 1.} Find $u^{k+1}$ such that
		 \begin{equation}\label{AM1}
		u^{k+1} \in \mathrm{argmin}_{u \in { \hat{C}}} \Phi(u,\theta^k) =  \sum_i \sum_j u_{i,j}{f_{i,j}(\theta^k)} + g(u).
		\end{equation}		
		{\bf 2.} Find $\theta^{k+1}$ such that
		\begin{equation}\label{AM2}
		\theta^{k+1} = \mathrm{argmin}_{\theta \in \mathcal S} \Phi(u^{k+1},\theta) = h(\theta) + \sum_i \sum_j u_{i,j}^{k+1}{f_{i,j}(\theta)}.
		\end{equation}

		 Set $k = k+1$\;
	}
\end{flushleft}

	\caption{An iterative scheme with Global $\theta$ $\&$ Global $u$ for \eqref{prb2} }  \label{alg1}
\end{algorithm}

Note that, as shown in Lemma \ref{lem_r}, that $\mathrm{argmin}_{u \in { \hat{C}}} \Phi(u,\theta^k) \subset C $. Hence, condition \eqref{stop1} is equivalent to that $u^{k}  \in \mathrm{argmin}_{u \in C} \Phi(u, \theta^k)$. Furthermore, we can derive the following result immediately.

\begin{lemma}
	Let $\{(u^k, \theta^k)\}$ be the sequence generated by \eqref{AM1} and \eqref{AM2}, then ${u}_{i,j}^k \in \{0,1\}$ for any $i$, $j$ and $k$.
\end{lemma}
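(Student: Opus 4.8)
The plan is to read this off directly from the relaxation exactness result, Lemma~\ref{lem_r}. First I would fix $k \ge 0$ and observe that, by construction \eqref{AM1}, the iterate $u^{k+1}$ is a \emph{global} minimizer of $u \mapsto \Phi(u,\theta^k)$ over the convex set $\hat C$; since $\theta^k \in \mathcal S$, this iterate is in particular a local minimizer in the precise sense required by Lemma~\ref{lem_r} (any $\epsilon > 0$ serves, because a global minimizer over $\hat C$ is a fortiori a minimizer over $\hat C \cap \mathbb{B}_\epsilon(u^{k+1})$). Applying Lemma~\ref{lem_r} with $\bar\theta = \theta^k$ and $\bar u = u^{k+1}$ then yields $u^{k+1} \in C$, i.e.\ $u^{k+1}_{i,j} \in \{0,1\}$ for every $i$ and $j$.

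It remains only to dispose of the base index $k = 0$: the scheme is initialized from a feasible binary point $u^0 \in C$ (this is part of the input to Algorithm~\ref{alg1}), so $u^0_{i,j} \in \{0,1\}$ trivially. A one-line induction on $k$ — the step $k \to k+1$ being exactly the paragraph above, and not even using the inductive hypothesis — then gives the conclusion for all $k$.

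I do not anticipate any real obstacle here; the statement is an immediate corollary of Lemma~\ref{lem_r} together with the fact that \eqref{AM1} produces global (hence local) minimizers of $\Phi(\cdot,\theta^k)$ over $\hat C$. The only point worth stating carefully is that the minimization in \eqref{AM1} is genuinely taken over the relaxed feasible set $\hat C$ rather than over $C$ itself, so that Lemma~\ref{lem_r} is actually applicable and the conclusion $u^{k+1} \in C$ is not circular — this is precisely the remark already made after the stopping condition \eqref{stop1}.
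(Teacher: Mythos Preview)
Your proposal is correct and matches the paper's approach exactly. The paper does not spell out a separate proof; it simply notes in the paragraph preceding the lemma that Lemma~\ref{lem_r} gives $\mathrm{argmin}_{u \in \hat C}\,\Phi(u,\theta^k) \subset C$ and declares the result immediate --- which is precisely your argument (your extra care about the base case $k=0$ is harmless, though one may also read the lemma as concerning only the iterates $u^k$ actually produced by \eqref{AM1}).
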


The subsequent result establishes the stability of the solution set of $u$-minimization problem with respect to variable $\theta$, denoted as $C^*(\theta) := \mathrm{argmin} _{u \in \hat{C}} \Phi(u, {\theta}) $.

\begin{lemma}\label{lem_min}
	For any $\bar{\theta} \in \mathcal S$ and $\bar{u} \in \mathrm{argmin}_{u \in\hat{C}} \Phi(u, \bar{\theta})$, then there exists $\epsilon > 0$ such that for any $\theta_\epsilon \in \mathbb{B}_{\epsilon}(\bar{\theta}) \cap \mathcal S$,  {where $\mathbb{B}_{\epsilon}(\bar{\theta}) := \{ \theta ~|~ \|\theta - \bar{\theta} \| \le \epsilon \}$,} $\mathrm{argmin}_{u \in\hat{C}} \Phi(u,\theta_\epsilon) \subseteq C^*(\bar{\theta}) := \mathrm{argmin} _{u \in \hat{C}} \Phi(u, \bar{\theta}) $.
\end{lemma}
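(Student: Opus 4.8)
The plan is to leverage the finiteness of the binary set $C$, which is what upgrades a routine upper-semicontinuity statement about solution maps into the strong set-containment $\mathrm{argmin}_{u\in\hat C}\Phi(u,\theta_\epsilon)\subseteq C^*(\bar\theta)$ asserted here.

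First I would reduce the $u$-subproblem from the polytope $\hat C$ to the finite vertex set $C$. For fixed $\theta\in\mathcal S$, the objective $u\mapsto\Phi(u,\theta)$ differs from $u\mapsto\psi_\theta(u):=\sum_i\sum_j u_{i,j}f_{i,j}(\theta)+g(u)$ only by the $u$-independent term $h(\theta)$, and $\psi_\theta$ is strictly concave in $u$, being an affine function plus the strictly concave $g$. Since $\hat C$ is compact the minimum is attained, and by Lemma \ref{lem_r} every global --- hence a fortiori every local --- minimizer over $\hat C$ lies in $C$. Hence $C^*(\theta)=\mathrm{argmin}_{u\in\hat C}\Phi(u,\theta)=\mathrm{argmin}_{u\in C}\psi_\theta(u)$ for every $\theta\in\mathcal S$; in particular each such solution set is a nonempty subset of the finite set $C$, and the set $C^*(\bar\theta)$ appearing in the statement equals $\mathrm{argmin}_{u\in C}\psi_{\bar\theta}(u)$.

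Next I would run the finite-set stability estimate at $\bar\theta$. Put $V^*:=\min_{u\in C}\psi_{\bar\theta}(u)$; if $C^*(\bar\theta)=C$ the claim is trivial, so I may assume $C\setminus C^*(\bar\theta)\neq\emptyset$ and set $\delta:=\min_{u\in C\setminus C^*(\bar\theta)}\psi_{\bar\theta}(u)-V^*>0$, a minimum over a finite nonempty set. For each fixed $u\in C$ the map $\theta\mapsto\psi_\theta(u)$ is continuous on $\mathcal S$, since the $f_{i,j}$ are continuous there and $g(u)$ is a fixed scalar; as $C$ is finite I can choose $\epsilon>0$ so that $|\psi_\theta(u)-\psi_{\bar\theta}(u)|<\delta/2$ for all $u\in C$ and all $\theta\in\mathbb B_\epsilon(\bar\theta)\cap\mathcal S$. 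Then, for such $\theta$, every $u\in C^*(\bar\theta)$ has $\psi_\theta(u)<V^*+\delta/2$, whereas every $u\in C\setminus C^*(\bar\theta)$ has $\psi_\theta(u)>\psi_{\bar\theta}(u)-\delta/2\ge V^*+\delta/2$; so the minimum of $\psi_\theta$ over $C$ stays below $V^*+\delta/2$ and is attained only at points of $C^*(\bar\theta)$. Together with the first step this gives $\mathrm{argmin}_{u\in\hat C}\Phi(u,\theta_\epsilon)\subseteq C^*(\bar\theta)$, which is the assertion.

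The main obstacle is minor: it is the reduction in the first step --- invoking Lemma \ref{lem_r} to pass from $\hat C$ to $C$ and verifying that ``the minimum over $\hat C$ is attained and lies in $C$'' actually yields the equality $C^*(\theta)=\mathrm{argmin}_{u\in C}\psi_\theta(u)$, not merely a one-sided inclusion. After that, the conclusion is the elementary fact that the $\mathrm{argmin}$ over a fixed finite set varies upper-semicontinuously --- here in the sharp set-containment sense --- with a continuously varying parameter.
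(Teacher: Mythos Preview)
Your proof is correct and follows essentially the same approach as the paper: reduce from $\hat C$ to the finite set $C$ via Lemma~\ref{lem_r}, exploit the strictly positive optimality gap $\delta$ over $C\setminus C^*(\bar\theta)$, and use continuity of $\Phi$ in $\theta$ (uniformly over the finitely many $u\in C$) to produce $\epsilon$. The only cosmetic differences are that you strip off the $u$-independent term $h(\theta)$ and compare against the threshold $V^*+\delta/2$, whereas the paper keeps $\Phi$ and compares directly with the reference value $\Phi(\bar u,\theta_\epsilon)$.
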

\begin{proof}
	From the definition of $C$ we have $|C| < \infty$.
	As $\bar{u} \in C^*(\bar{\theta}) $, 
	\[\Phi(\bar{u}, \bar{\theta}) < \Phi(u, \bar{\theta}), \  \forall u \in C\backslash C^*(\bar{\theta}).\] 
	
	Furthermore, because $|C| < \infty$, there exists $\delta > 0$ such that $\Phi(\bar{u}, \bar{\theta}) < \Phi(u, \bar{\theta}) - \delta$ for any $u \in C\backslash C^*(\bar{\theta})$. 
	
	Since $h$ and $f_{i,j}$ are continuous on $\mathcal S$, for each $u \in C\backslash C^*(\bar{\theta})$, there exists $\epsilon_u > 0$ such that $\Phi(\bar{u}, \theta_\epsilon) < \Phi(u, \theta_\epsilon) - \delta/2$ for any $\theta_\epsilon \in \mathbb{B}_{\epsilon_u}(\bar{\theta}) \cap \mathcal S$. In addition, because $|C| < \infty$, there exists $\epsilon > 0$ such that $\min_{u \in C\backslash C^*(\bar{\theta})} \epsilon_u \ge \epsilon$ and thus \[ \Phi(\bar{u}, \theta_\epsilon) < \Phi(u, \theta_\epsilon) - \delta/2, \ \ \ \forall u \in C\backslash C^*(\bar{\theta})\] and $\theta_\epsilon \in \mathbb{B}_{\epsilon}(\bar{\theta}) \cap \mathcal S$.
	
	Combining above with Lemma \ref{lem_r} yields that 
	\[ \mathrm{argmin}_{u \in\hat{C}} \Phi(u,\theta_\epsilon) \subseteq C^*(\bar{\theta}),   \ \ \ \forall \theta_\epsilon \in \mathcal S \cap \mathbb{B}_{\epsilon}(\bar{\theta}).\] 
\end{proof}

{
	To analyze the convergence property of the sequence generated by alternating minimization scheme  \eqref{AM1} and \eqref{AM2}, we introduce the following assumption on the problem \eqref{prb2}. This assumption is considered mild, particularly for the problem \eqref{prb2} derived from the problem~\eqref{EnergyGeneral}, where the functions $f_{i,j}(\theta)$ typically vary distinctively for each $i, j$.
	
	\begin{assumption}\label{ass1}
		For any $u_1, u_2 \in C$ and $\theta \in \mathcal S$, if $u_1 \neq u_2$, then $	 \Phi(u_1,\theta) \neq  \Phi(u_2,\theta)$.
	\end{assumption}

It can be established that, under Assumption \ref{ass1}, $C^*(\theta) $ is a singleton for any $\theta \in \mathcal S$.

\begin{proposition}\label{prop_min}
	Suppose Assumption \ref{ass1} is satisfied. Let $\{(u^k, \theta^k)\}$ be the sequence generated by alternating minimization scheme  \eqref{AM1} and \eqref{AM2}, suppose that condition \eqref{stop1} is satisfied for some $k$, then $(u^{k}, \theta^{k})$ is a local minimizer of problem \eqref{prb2} in the sense that there exists $\epsilon > 0$ such that
	\[
	\Phi(u^{k}, \theta^{k}) \le \Phi(u, \theta), \quad \forall (u,\theta) \in \hat{C} \times (\mathcal S \cap \mathbb{B}_{\epsilon}(\theta^{k})).
	\]
\end{proposition}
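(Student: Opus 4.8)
The plan is to deduce the joint local optimality at $(u^{k},\theta^{k})$ by combining two separate one-variable facts: that $\theta^{k}$ is a \emph{global} minimizer of $\theta\mapsto\Phi(u^{k},\theta)$ over $\mathcal S$ (this is the defining property of the $\theta$-subproblem \eqref{AM2}), and that, by the stability Lemma~\ref{lem_min}, the $u$-minimizer over $\hat C$ stays inside $C^{*}(\theta^{k})$ for $\theta$ in a neighborhood of $\theta^{k}$. Under Assumption~\ref{ass1} this set is a singleton, which is what upgrades the two facts into a genuine local-minimum statement.

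First I would pin down $C^{*}(\theta^{k})$. The set $\mathrm{argmin}_{u\in\hat C}\Phi(u,\theta^{k})$ is nonempty because $\hat C$ is compact and $\Phi(\cdot,\theta^{k})$ is continuous, and by Lemma~\ref{lem_r} each of its elements lies in $C$. If it contained two distinct points $u_1\neq u_2$ (both in $C$), they would share the same objective value $\Phi(u_1,\theta^{k})=\Phi(u_2,\theta^{k})$, contradicting Assumption~\ref{ass1}; hence $C^{*}(\theta^{k})$ is a singleton. Condition \eqref{stop1} gives $u^{k}\in C^{*}(\theta^{k})$, so $C^{*}(\theta^{k})=\{u^{k}\}$, which is exactly the fact announced right before the proposition.

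Next I would invoke Lemma~\ref{lem_min} with $\bar\theta=\theta^{k}$ and $\bar u=u^{k}$: there is $\epsilon>0$ such that $\mathrm{argmin}_{u\in\hat C}\Phi(u,\theta)\subseteq C^{*}(\theta^{k})=\{u^{k}\}$ for every $\theta\in\mathbb{B}_{\epsilon}(\theta^{k})\cap\mathcal S$. Since this argmin set is again nonempty (compactness and continuity), it equals $\{u^{k}\}$, and therefore $\Phi(u,\theta)\ge\Phi(u^{k},\theta)$ for all $u\in\hat C$ and all $\theta\in\mathbb{B}_{\epsilon}(\theta^{k})\cap\mathcal S$. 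On the other hand, the $\theta$-update \eqref{AM2} yields $\theta^{k}\in\mathrm{argmin}_{\theta\in\mathcal S}\Phi(u^{k},\theta)$, hence $\Phi(u^{k},\theta)\ge\Phi(u^{k},\theta^{k})$ for all $\theta\in\mathcal S$. Chaining these two inequalities over $(u,\theta)\in\hat C\times(\mathcal S\cap\mathbb{B}_{\epsilon}(\theta^{k}))$ gives $\Phi(u,\theta)\ge\Phi(u^{k},\theta^{k})$, which is the assertion.

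I expect the only substantive ingredient to be Lemma~\ref{lem_min}, which is already proved; everything else is bookkeeping. The points that need a little care are the nonemptiness of the various argmin sets over $\hat C$ (used to turn the inclusion ``$\subseteq\{u^{k}\}$'' coming from Lemma~\ref{lem_min} into an equality) and the iterate-indexing convention: one uses precisely that $\theta^{k}$ arises as a minimizer of $\Phi(u^{k},\cdot)$ over $\mathcal S$, which for $k\ge 1$ is step \eqref{AM2} and for $k=0$ is the role played by the initialization $\theta^{0}$ in the framework of Section~\ref{sec:alg}.
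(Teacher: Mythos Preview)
Your argument is correct and follows essentially the same route as the paper: establish $C^{*}(\theta^{k})=\{u^{k}\}$ via Assumption~\ref{ass1} and condition~\eqref{stop1}, invoke Lemma~\ref{lem_min} to get $\mathrm{argmin}_{u\in\hat C}\Phi(u,\theta)\subseteq\{u^{k}\}$ for nearby $\theta$, and combine this with the optimality of $\theta^{k}$ from \eqref{AM2}. The only cosmetic difference is that the paper phrases the final step as a contradiction while you chain the two inequalities directly; your version is in fact slightly more careful in noting the nonemptiness of the argmin and the indexing subtlety at $k=0$.
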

\begin{proof}
	Let $(\bar{u}, \bar{\theta})$ denote the iterate $(u^k, \theta^k)$ satisfying conditions \eqref{stop1}.
	Given the fulfillment of Assumption \ref{ass1}, $C^*(\bar{\theta})$ is a singleton, yielding $C^*(\bar{\theta}) = \{\bar{u}\}$.
	Lemma \ref{lem_min} shows that there exists $\epsilon > 0$ such that 
	$$\mathrm{argmin}_{u \in\hat{C}} \Phi(u,\theta_\epsilon) \subseteq C^*(\bar{\theta})= \{\bar{u}\}, \ \ \forall \theta_\epsilon \in \mathcal S \cap \mathbb{B}_{\epsilon}(\bar{\theta}).$$
	If there exists $(u_0, \theta_0) \in \hat{C} \times (\mathcal S \cap \mathbb{B}_{\epsilon}(\bar{\theta}))$ such that $$\Phi(u_0, \theta_0) < \Phi(\bar{u}, \bar{\theta}),$$ then $\Phi(\bar{u}, \theta_0) \le \Phi(u_0, \theta_0)$ because of the fact that 
	$$\mathrm{argmin}_{u \in \hat{C}} \Phi(u,\theta_0) \subseteq C^*(\bar{\theta}) = \{\bar{u}\}.$$
	In addition, because $\bar{\theta} = \mathrm{argmin}_{\theta \in \mathcal S} \Phi(\bar{u},\theta)$, we have \[\Phi(\bar{u}, \bar{\theta})  \le \Phi(\bar{u}, \theta_0) \le \Phi(u_0, \theta_0) < \Phi(\bar{u}, \bar{\theta}),\] which leads a contradiction and we get the conclusion.
\end{proof}

Next, we have following result for the fixed point of Algorithm~\ref{alg1}.

\begin{theorem}\label{thm_min}
	Suppose Assumption \ref{ass1} is satisfied. Let $\{(u^k, \theta^k)\}$ be the sequence generated by alternating minimization scheme \eqref{AM1} and \eqref{AM2}, then ${u}_{i,j}^k \in \{0,1\}$ for any $i$, $j$ and $k$, and
	\[
	\Phi(u^{k+1}, \theta^{k+1}) < \Phi(u^{k}, \theta^{k}),
	\]
	provided condition \eqref{stop1} is not satisfied at iteration $k$. Furthermore, if $\Phi$ is bounded below on $C \times \mathcal{S}$, there exists $K \ge 0$ such that $(u^{K}, \theta^{K})$ satisfies condition \eqref{stop1}, and is a local minimizer of problem \eqref{prb2} in the sense that there exists $\epsilon > 0$ such that
	\[
	\Phi(u^{K}, \theta^{K}) \le \Phi(u, \theta), \quad \forall (u,\theta) \in \hat{C} \times (\mathcal S \cap \mathbb{B}_{\epsilon}(\theta^{K})).
	\]
\end{theorem}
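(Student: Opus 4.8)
The plan is to combine the three ingredients already assembled: the binary-exactness of relaxed minimizers (Lemma~\ref{lem_r}), the local stability of the $u$-solution map $\theta \mapsto C^*(\theta)$ (Lemma~\ref{lem_min}), and the characterization of fixed points as local minimizers (Proposition~\ref{prop_min}). First I would establish the binary claim: since $u^{k+1}$ is defined by \eqref{AM1} as a minimizer over $\hat C$, Lemma~\ref{lem_r} applied with $\bar\theta = \theta^k$ gives $u^{k+1} \in C$, i.e.\ $u_{i,j}^{k+1} \in \{0,1\}$; this is the content of the Lemma preceding Lemma~\ref{lem_min}, so I would simply cite it.

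Next I would prove the strict descent inequality under the hypothesis that \eqref{stop1} fails at iteration $k$, i.e.\ $u^k \notin \mathrm{argmin}_{u\in\hat C}\Phi(u,\theta^k)$. The chain is
\[
\Phi(u^{k+1},\theta^{k+1}) \le \Phi(u^{k+1},\theta^k) < \Phi(u^k,\theta^k),
\]
where the first inequality is because $\theta^{k+1}$ minimizes $\Phi(u^{k+1},\cdot)$ over $\mathcal S$ (so in particular $\Phi(u^{k+1},\theta^{k+1}) \le \Phi(u^{k+1},\theta^k)$), and the strict second inequality is because $u^{k+1}$ is a \emph{global} minimizer of $\Phi(\cdot,\theta^k)$ over $\hat C$ while $u^k$ is feasible in $\hat C$ but, by the failure of \eqref{stop1}, is not a minimizer — hence $\Phi(u^{k+1},\theta^k) < \Phi(u^k,\theta^k)$. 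I should note that this step uses only that \eqref{AM1} returns a global minimizer, not Assumption~\ref{ass1}.

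For the termination and local-minimality conclusion: consider the value sequence $\{\Phi(u^k,\theta^k)\}$, which by the descent inequality is strictly decreasing as long as \eqref{stop1} fails. Since each $(u^k,\theta^k) \in C \times \mathcal S$ and $\Phi$ is assumed bounded below on $C\times\mathcal S$, the sequence of values is bounded below. If \eqref{stop1} never held, the values would strictly decrease forever; the key observation is that, because $u^k \in C$ for all $k$ and $|C| < \infty$, Assumption~\ref{ass1} forces the strict decrements $\Phi(u^k,\theta^k) - \Phi(u^{k+1},\theta^{k+1})$ to be bounded below by a positive quantity — more carefully, one can argue that an infinite strictly decreasing sequence in a bounded-below set is impossible to sustain once we use that consecutive iterates that both fail \eqref{stop1} produce a genuine drop. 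I expect this is the main obstacle: turning ``strictly decreasing and bounded below'' into ``terminates in finitely many steps'' requires a uniform lower bound on the per-step decrease, which does not follow from strict descent alone. The cleanest route is probably to argue by contradiction using the finiteness of $C$: the pairs $(u^k, u^{k+1})$ live in the finite set $C \times C$, and if \eqref{stop1} fails infinitely often one can extract indices $k < \ell$ with $u^k = u^\ell$ and $u^{k+1} = u^{\ell+1}$; combined with $\theta^k = \mathrm{argmin}_\theta \Phi(u^k,\theta)$ and $\theta^\ell = \mathrm{argmin}_\theta\Phi(u^\ell,\theta)$ (same minimization problem, hence comparable values), this would contradict the strict monotonicity $\Phi(u^k,\theta^k) > \Phi(u^{k+1},\theta^{k+1}) \ge \cdots > \Phi(u^{\ell+1},\theta^{\ell+1})$. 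Once finite termination at some index $K$ is established, \eqref{stop1} holds at $K$, and Proposition~\ref{prop_min} applies verbatim to give the local-minimality statement with the claimed $\epsilon$-neighborhood over $\hat C \times (\mathcal S \cap \mathbb B_\epsilon(\theta^K))$, completing the proof.
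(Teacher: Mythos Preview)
Your proposal is correct and follows the same route as the paper: binary-ness from Lemma~\ref{lem_r}, the descent chain $\Phi(u^{k+1},\theta^{k+1}) \le \Phi(u^{k+1},\theta^k) < \Phi(u^k,\theta^k)$, finite termination from $|C|<\infty$, and then Proposition~\ref{prop_min} for local minimality. The only place you work harder than necessary is termination: the paper records the chain with the endpoints written as $\min_{\theta\in\mathcal S}\Phi(u^{k+1},\theta) < \min_{\theta\in\mathcal S}\Phi(u^{k},\theta)$ (valid since $\theta^k$ minimizes $\Phi(u^k,\cdot)$ for $k\ge 1$), which makes the values a strictly decreasing function of $u^k\in C$ alone and forces all $u^k$ to be pairwise distinct --- so there is no need to extract a repeated \emph{pair} $(u^k,u^{k+1})$, no need for a uniform gap on decrements, and the bounded-below hypothesis is in fact not used in this step.
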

\begin{proof}
	Given the fulfillment of Assumption \ref{ass1}, $C^*({\theta})$ is a singleton for any $\theta \in \mathcal S$.
	Becuase $u^{k+1} \in \mathrm{argmin}_{u \in \hat{C}} \Phi(u,\theta^k)$, we have that $C^*({\theta^k}) = \{u ^{k+1}\}$. When condition \eqref{stop1} is not satisfied, {\it i.e.}, $u^{k+1} \neq u^k$, it follows that
	 \[
	\mathrm{min}_{\theta \in \mathcal S} \Phi(u^{k+1},\theta) =  \Phi(u^{k+1}, \theta^{k+1}) \le \Phi(u^{k+1}, \theta^{k}) < \Phi(u^{k}, \theta^{k}) = \mathrm{min}_{\theta \in \mathcal S} \Phi(u^{k},\theta).
	\]
	 
The fact that ${u}_{i,j}^k \in \{0,1\}$ for any $i$, $j$ and $k$ follows from Lemma \ref{lem_r}.  Thus $u^k \in C$ for any $k$.
	Finally, recall the fact that {$|C|$} is finite, combining with Proposition \ref{prop_min}, the conclusion easily follows.
\end{proof}

}

\subsection{Convergence analysis for Global $\theta$ $\&$ Local $u$ case}

A weakness of Proposition \ref{prop_min} is that it relates exclusively to global (as opposed to local) minima of subproblem 
(\ref{subproblem2}). This part remedies the situation somewhat.   
We replace the $u$-update \eqref{AM1} by only reaching a local minimum on $C$ in sense of
\begin{equation}\label{wAM1}
	u^{k+1} \in C_{r}^*(\theta^k),
\end{equation}	
where 
\[
C_{r}^*(\theta) := \{ \bar{u} \in C ~|~ \Phi(\bar{u},\theta) \le \Phi(u,\theta), \ \forall u \in C \cap \mathbb{B}_{r}(\bar{u}) \},
\]
with given $r > 0$.
We then accordingly replace condition \eqref{stop1} by 
\begin{equation}\label{wstop1}
u^k \in C_{r}^*(\theta^k).
\end{equation}
The algorithm is thus illustrated in Algorithm \ref{alg2}. 

\begin{algorithm}[ht!]
	\begin{flushleft}
	{\bf Input} ($u^0, \theta^0$) and $r > 0$.
	
	{\bf Output} ($\bar u, \bar \theta$)  that approximately minimizes  \eqref{prb2} .
	
	Set $k=0$\;
	
	{\bf While}{ the following condition doesn't hold: \begin{equation}\label{lstop1}
			u^k \in C_{r}^*(\theta^k),
		\end{equation}
	}{
		{\bf 1.} Find $u^{k+1}$ being a local minimizer in sense of
		\begin{equation}\label{lAM1}
			u^{k+1} \in C_{r}^*(\theta^k),
		\end{equation}		
		{\bf 2.} Find $\theta^{k+1}$ such that
		\begin{equation}\label{lAM2}
			\theta^{k+1} = \mathrm{argmin}_{\theta \in \mathcal S} \Phi(u^{k+1},\theta) = h(\theta) + \sum_i \sum_j u_{i,j}^{k+1}{f_{i,j}(\theta)}.
		\end{equation}
		Set $k = k+1$\;
	}
\end{flushleft}
	\caption{An iterative scheme with Global $\theta$ $\&$ Local $u$  for \eqref{prb2} }  \label{alg2}
\end{algorithm}

Similar to  Proposition \ref{prop_min} and Theorem \ref{thm_min}, we may derive convergence results as follows. The proofs are purely technical and thus omitted.

\begin{proposition}\label{prop_wmin}
	Suppose Assumption \ref{ass1} is satisfied. Let $\{(u^k, \theta^k)\}$ be the sequence generated by alternating minimization scheme \eqref{lAM1} with given $r > 0$ and \eqref{lAM2}, suppose that condition \eqref{lstop1} is satisfied for some $k$, then $(u^{k}, \theta^{k})$ is a local minimizer of problem \eqref{prb2} in the sense that there exists $\epsilon > 0$ such that
	\[
	\Phi(u^{k}, \theta^{k}) \le \Phi(u, \theta), \quad \forall (u,\theta) \in \left( C \cap \mathbb{B}_{r}(u^k) \right) \times (\mathcal S \cap \mathbb{B}_{\epsilon}(\theta^{k})).
	\]
\end{proposition}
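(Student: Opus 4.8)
The plan is to mirror the proof of Proposition~\ref{prop_min}, replacing the global-minimality arguments with local ones and carrying the radius $r$ through the comparison set. Write $(\bar u,\bar\theta) := (u^k,\theta^k)$ for the iterate satisfying condition \eqref{lstop1}, so that $\bar u \in C_r^*(\bar\theta)$. Since Assumption~\ref{ass1} holds, $C^*(\theta)$ is a singleton for every $\theta\in\mathcal S$; in particular $\bar\theta = \mathrm{argmin}_{\theta\in\mathcal S}\Phi(\bar u,\theta)$ because the $\theta$-update \eqref{lAM2} is an exact global minimization and condition \eqref{lstop1} being active means no further change occurs — more precisely, $\bar\theta$ is a global minimizer of $\Phi(\bar u,\cdot)$ over $\mathcal S$ by construction of the iteration.

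The key step is a local analogue of Lemma~\ref{lem_min} adapted to the finite set $C\cap\mathbb B_r(\bar u)$. Since $\bar u \in C_r^*(\bar\theta)$, we have $\Phi(\bar u,\bar\theta) \le \Phi(u,\bar\theta)$ for all $u\in C\cap\mathbb B_r(\bar u)$, and by Assumption~\ref{ass1} the inequality is strict for $u\neq\bar u$. Because $C\cap\mathbb B_r(\bar u)$ is finite, there is $\delta>0$ with $\Phi(\bar u,\bar\theta) < \Phi(u,\bar\theta)-\delta$ for every such $u\neq\bar u$; continuity of $h$ and $f_{i,j}$ on $\mathcal S$ then gives $\epsilon>0$ such that $\Phi(\bar u,\theta_\epsilon) < \Phi(u,\theta_\epsilon)-\delta/2$ for all $u\in (C\cap\mathbb B_r(\bar u))\setminus\{\bar u\}$ and all $\theta_\epsilon\in\mathcal S\cap\mathbb B_\epsilon(\bar\theta)$. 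Hence $\bar u$ is the unique minimizer of $\Phi(\cdot,\theta_\epsilon)$ over $C\cap\mathbb B_r(\bar u)$ for every such $\theta_\epsilon$. Now argue by contradiction: if some $(u_0,\theta_0)\in (C\cap\mathbb B_r(\bar u))\times(\mathcal S\cap\mathbb B_\epsilon(\bar\theta))$ satisfies $\Phi(u_0,\theta_0) < \Phi(\bar u,\bar\theta)$, then $\Phi(\bar u,\theta_0) \le \Phi(u_0,\theta_0)$ by the uniqueness just established, and combining with $\Phi(\bar u,\bar\theta)\le\Phi(\bar u,\theta_0)$ (optimality of $\bar\theta$ in the $\theta$-subproblem) yields $\Phi(\bar u,\bar\theta)\le\Phi(u_0,\theta_0)<\Phi(\bar u,\bar\theta)$, a contradiction. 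This delivers the claimed inequality over $\left(C\cap\mathbb B_r(u^k)\right)\times(\mathcal S\cap\mathbb B_\epsilon(\theta^k))$.

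The main subtlety — and the reason one cannot just invoke Lemma~\ref{lem_min} verbatim — is that Lemma~\ref{lem_r} (hence Lemma~\ref{lem_min}) concludes membership in the full solution set $C^*(\bar\theta)$ by ruling out fractional minimizers over all of $\hat C$, whereas here we only have local minimality on the integer lattice $C$ and the comparison must stay within the ball $\mathbb B_r(\bar u)$; we never leave $C$ and never need the relaxation-exactness argument, so the finiteness of $C\cap\mathbb B_r(\bar u)$ does all the work that $|C|<\infty$ did before. One should also check that $\bar u\in C$ automatically (it is, since $C_r^*(\theta)\subseteq C$ by definition), so no appeal to Lemma~\ref{lem_r} is needed in this case. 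Everything else is a routine transcription of the Proposition~\ref{prop_min} argument, which is presumably why the authors state that "the proofs are purely technical and thus omitted."
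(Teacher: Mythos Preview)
Your proof is correct and follows exactly the route the paper signals: mirror Proposition~\ref{prop_min} with the finite comparison set $C\cap\mathbb B_r(\bar u)$ in place of $C^*(\bar\theta)$, using Assumption~\ref{ass1} for strict inequality, finiteness for a uniform gap $\delta$, and continuity of $h,f_{i,j}$ for the $\epsilon$-stability in $\theta$. Since the paper omits the proof as ``purely technical,'' there is nothing further to compare; your write-up is the intended argument.
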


\begin{theorem}\label{thm_min2}
	Suppose Assumption \ref{ass1} is satisfied. Let $\{(u^k, \theta^k)\}$ be the sequence generated by alternating minimization scheme \eqref{lAM1} with given $r > 0$ and \eqref{lAM2}, then 
	\[
	\Phi(u^{k+1}, \theta^{k+1}) < \Phi(u^{k}, \theta^{k}),
	\]
	provided conditions \eqref{lstop1} is not satisfied at iteration $k$. Furthermore, if $\Phi$ is bounded below, there exists $K \ge 0$ such that $(u^{K}, \theta^{K})$ satisfies condition \eqref{lstop1}  and is a local minimizer of problem \eqref{prb2} in the sense that there exists $\epsilon > 0$ such that
	\[
	\Phi(u^{K}, \theta^{K}) \le \Phi(u, \theta), \quad \forall (u,\theta) \in \left({C} \cap \mathbb{B}_r(u^{K}) \right)\times (\mathcal S \cap \mathbb{B}_{\epsilon}(\theta^{K})).
	\]
\end{theorem}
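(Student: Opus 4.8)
The plan is to follow the template of the proof of Theorem~\ref{thm_min}, adapting it to the weaker $u$-update. Two features force changes relative to that argument. First, the set $C_{r}^*(\theta)$ of $r$-local minimizers of $\Phi(\cdot,\theta)$ over $C$ need not be a singleton even under Assumption~\ref{ass1} (which only makes the \emph{global} minimizer set $C^*(\theta)$ a singleton), so strict descent cannot simply be read off from global optimality. Second, I will use that the $u$-update \eqref{lAM1} is realized by a descent local search initialized at the current iterate $u^k$, so that $\Phi(u^{k+1},\theta^k)\le\Phi(u^k,\theta^k)$ always holds, and is strict whenever at least one improving step is taken; this is the natural reading of ``find $u^{k+1}$ being a local minimizer'' and is what makes the descent conclusion correct.

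First I would establish the strict descent. Fix an iteration $k$ at which \eqref{lstop1} fails, so $u^k\notin C_{r}^*(\theta^k)$; then some $u'\in C\cap\mathbb{B}_{r}(u^k)$ satisfies $\Phi(u',\theta^k)<\Phi(u^k,\theta^k)$, so the local search makes at least one strictly improving move before halting at $u^{k+1}\in C_{r}^*(\theta^k)$, whence $\Phi(u^{k+1},\theta^k)<\Phi(u^k,\theta^k)$ (or: $u^{k+1}\neq u^k$, so Assumption~\ref{ass1} forbids $\Phi(u^{k+1},\theta^k)=\Phi(u^k,\theta^k)$, which together with monotonicity of the $u$-step yields the strict inequality). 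Since $\theta^{k+1}=\mathrm{argmin}_{\theta\in\mathcal S}\Phi(u^{k+1},\theta)$, I would then chain
\[
\Phi(u^{k+1},\theta^{k+1})=\min_{\theta\in\mathcal S}\Phi(u^{k+1},\theta)\le\Phi(u^{k+1},\theta^k)<\Phi(u^k,\theta^k),
\]
which is the asserted decrease.

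Next I would argue finite termination and then invoke the companion proposition. For $k\ge 1$ one has $u^k\in C_{r}^*(\theta^{k-1})\subseteq C$ and, by \eqref{lAM2}, $\Phi(u^k,\theta^k)=\min_{\theta\in\mathcal S}\Phi(u^k,\theta)$, so the objective values of the iterates lie in the finite set $V:=\{\min_{\theta\in\mathcal S}\Phi(u,\theta)\ :\ u\in C\}$, whose entries are finite because $\Phi$ is bounded below on $C\times\mathcal S$. Were \eqref{lstop1} to fail at every iteration, the previous step would force an infinite strictly decreasing sequence inside the finite set $V$, a contradiction; hence there is a smallest $K\ge 0$ with $u^K\in C_{r}^*(\theta^K)$. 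Applying Proposition~\ref{prop_wmin} at $(u^K,\theta^K)$ then supplies $\epsilon>0$ with $\Phi(u^K,\theta^K)\le\Phi(u,\theta)$ for all $(u,\theta)\in(C\cap\mathbb{B}_{r}(u^K))\times(\mathcal S\cap\mathbb{B}_{\epsilon}(\theta^K))$, which is exactly the claimed local minimality. (If one also wishes to spell out Proposition~\ref{prop_wmin}, its proof parallels Proposition~\ref{prop_min}: $C\cap\mathbb{B}_{r}(u^K)$ is finite, Assumption~\ref{ass1} makes $u^K$ its strict minimizer for $\Phi(\cdot,\theta^K)$, continuity of $h$ and $f_{i,j}$ keeps $u^K$ the minimizer over this finite set for all $\theta$ near $\theta^K$, and the $\theta$-optimality of $\theta^K$ closes the contradiction.)

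The step I expect to be the main obstacle is the strict descent when \eqref{lstop1} fails. In Theorem~\ref{thm_min} strictness is free because $u^{k+1}$ is the \emph{unique} global minimizer of $\Phi(\cdot,\theta^k)$ over $\hat C$, so $u^{k+1}\neq u^k$ already forces $\Phi(u^{k+1},\theta^k)<\Phi(u^k,\theta^k)$; here $C_{r}^*(\theta^k)$ may contain several local minimizers with different objective values, and Assumption~\ref{ass1} alone only gives $\Phi(u^{k+1},\theta^k)\neq\Phi(u^k,\theta^k)$, not the needed inequality. One must therefore genuinely exploit that the $u$-subroutine is a monotone descent method to rule out the returned local minimizer having objective value no smaller than $\Phi(u^k,\theta^k)$. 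Everything else is, as the authors note, routine adaptation of the Global~$u$ arguments.
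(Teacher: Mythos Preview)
The paper omits the proof entirely, stating only that it is ``purely technical'' and parallels Proposition~\ref{prop_min} and Theorem~\ref{thm_min}. Your proposal follows exactly that template --- strict descent, then finite termination via the finiteness of $C$, then Proposition~\ref{prop_wmin} --- so it matches the intended approach.

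Your observation in the final paragraph is well taken and worth emphasizing: the update rule \eqref{lAM1} as literally written only requires $u^{k+1}\in C_r^*(\theta^k)$, which by itself does \emph{not} force $\Phi(u^{k+1},\theta^k)<\Phi(u^k,\theta^k)$ when $u^k\notin C_r^*(\theta^k)$, since one could in principle select an $r$-local minimizer in a different basin with larger objective value than $\Phi(u^k,\theta^k)$, and Assumption~\ref{ass1} only rules out equality, not the wrong inequality. The strict descent claim therefore relies on the implicit (and natural) interpretation that $u^{k+1}$ is produced by a monotone local search initialized at $u^k$; your making this explicit is a clarification the paper leaves unstated, not a deviation from it. With that interpretation in place, the rest of your argument --- the finite-set descent contradiction using $V=\{\min_{\theta\in\mathcal S}\Phi(u,\theta):u\in C\}$ and the appeal to Proposition~\ref{prop_wmin} --- is correct and routine.
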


\subsection{Convergence analysis for  inexact Stationay $\theta$ $\&$ Thresholding $u$ case}

Both Algorithms \ref{alg1} and \ref{alg2} assume that a global or local minimum $u^k$ of subproblem \eqref{AM1} or \eqref{lAM1}, and  a global minimum $\theta^k$ of subproblem (\ref{subproblem1}) can be found exactly.
In this part, assuming that $\Phi$ is continuously differentiable with respect to $u$,  and denoting $\nabla_{u_{i,j}} \Phi$ as the gradient of $\Phi$ with respect to $u_{i, j}$, we update the variable $u$ by applying the very simple thresholding approach to solve the $u$-subproblem $$\mathrm{min}_{u \in { {C}}} \Phi(u,\theta^k).$$
To elaborate, given $u^{k,t} \in C$, for any $j \in [p]$, we choose $I(j) \in \mathrm{argmin}_{i \in [n]}  \nabla_{u_{i,j}} \Phi(u^{k,t},\theta^k)$ and set
\begin{equation}\label{th_u}
	u_{i,j}^{k, t+1} = 
	\begin{cases}
		1 & \textrm{for} \ \ i = I(j) , \\
		0 & \textrm{otherwise.}
	\end{cases}
\end{equation}
This $u^{k, t+1}$ is also a solution to the following approximation problem of $\mathrm{min}_{u \in { {C}}} \Phi(u,\theta^k)$, where the objective  is the linearization of $\Phi(u,\theta^k)$ at $u^{k, t}$,
\[
u^{k, t+1}  \in \mathrm{argmin}_{u \in C} \Phi(u^{k, t},\theta^k) + \langle \nabla_u\Phi(u^{k, t},\theta^k), u - u^{k, t} \rangle. 
\] 
On the other hand, unconstrained convex minimization method is usually solved iteratively, and only approximate first-order stationary solutions can be pursued by internal iterations.  As a consequence, it is mathematically important to involve a suitable inexactness criterion for solving convex subproblem (\ref{subproblem1})  and study the convergence with $\theta$ being accessed inexactly.
To be specific,  in $\theta$-update, we compute an approximate stationary point to subproblem (\ref{subproblem1}) 
\[\min_{\theta \in \mathcal S} \Phi(u^{k+1},\theta) = h(\theta) + \sum_i \sum_j u_{i,j}^{k+1}{f_{i,j}(\theta)}\] satisfying
\begin{equation}\label{iAM2}
\Phi(u^{k+1},\theta^{k+1}) \le \Phi(u^{k+1},\theta^k), \quad \text{and} \quad	\mathrm{dist}\left(0, \partial_\theta \Phi(u^{k+1},\theta^{k+1}) + \mathcal{N}_{\mathcal S}(\theta^{k+1}) \right) \le \varepsilon_{k+1}, 
\end{equation}
where in the sense of convex analysis, $\partial_\theta \Phi$ denotes the subgradient of $\Phi$ with respect to $\theta$, $\mathcal{N}_{\mathcal S}(\theta)$ denotes the normal cone of $\mathcal S$ at $\theta$, {i.e., $\mathcal{N}_{\mathcal S}(\theta) := \{ \xi ~|~ \langle \xi, s - \theta \rangle \le 0, \forall s \in \mathcal{S} \}$,} and $\{\varepsilon_k\}$ is a positive vanishing sequence which controls the accuracy. 
The algorithm is illustrated in Algorithm \ref{alg3}. 

\begin{algorithm}[ht!]
	\begin{flushleft}
	{\bf Input:} {$(u^0, \theta^0) \in C \times \mathcal{S}$, and sequence $\{\varepsilon_k\}$ that $\varepsilon_k \rightarrow 0$}
	
	{\bf Output:} { ($\bar u, \bar \theta$)  that approximately minimizes  \eqref{prb2} .}
	
	Set $k=0$\;
	
	{\bf While} { $\|u^{k+1} - u^k\| + \|\theta^{k+1} - \theta^k\| > tol$
	}{
		
		{\bf 1.} Find $u^{k+1} \in C$  by using \eqref{th_u} with $ u^{k,0} =  u^{k}$ until $u^{k,T+1}  = u^{k,T}$. Set $u^{k+1} = u^{k,T}$.
		
		{\bf 2.} Find $\theta^{k+1}$ being an approximate stationary point to problem $$\mathrm{min}_{\theta \in \mathcal S} \Phi(u^{k+1},\theta)$$ satisfying \eqref{iAM2}.
		
		Set $k = k+1$\;
	}
\end{flushleft}
	\caption{An iterative scheme with inexact Stationay $\theta$ $\&$  Local $u$ for \eqref{prb2} }  \label{alg3}
\end{algorithm}

\subsubsection{Analysis on the thresholding on $u$ }
The following lemma presents the basic property of the thresholding on $u $ given in \eqref{th_u}.

\begin{lemma}\label{lem1}
	Given $u \in C$ and $\theta \in \mathcal{S}$, let
	\[
	u^+ \in  \mathrm{argmin}_{v \in C} \Phi(u,\theta) + \langle \nabla_u\Phi(u,\theta), v - u \rangle,
	\]
	which, for any $j \in [p]$, can be expressed as
	\begin{equation*}
		u_{i,j}^{+} = 
		\begin{cases}
			1 & \textrm{for} \ \ i = I(j) , \\
			0 & \textrm{otherwise},
		\end{cases}
	\end{equation*}
where $I(j) \in \mathrm{argmin}_{i \in [n]}  \nabla_{u_{i,j}} \Phi(u,\theta)$.
Then 
$$\Phi(u^{+}, \theta) < \Phi(u, \theta),$$ if $u^{+} \neq u$. Moreover, $u$ is a strict local minimum of $\mathrm{min}_{u \in { \hat{C}}} \Phi(u,\theta)$ if $ u^{+} = u$ and $\mathrm{argmin}_{i \in [n]} \nabla_{u_{i,j}}\Phi(u, \theta) $ is a singleton for any $j \in [p]$.
\end{lemma}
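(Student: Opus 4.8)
The plan is to treat the two assertions of the lemma separately, both times using that $\Phi(\cdot,\theta)$ is strictly concave (it is the affine function $h(\theta)+\sum_{i,j}u_{i,j}f_{i,j}(\theta)$ plus the strictly concave $g$) and, for the second assertion, continuously differentiable in $u$ as assumed throughout this subsection.

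For the strict-descent claim I would abbreviate the linearization by $\ell(v):=\Phi(u,\theta)+\langle\nabla_u\Phi(u,\theta),\,v-u\rangle$. Since $u^+$ minimizes $\ell$ over $C$ and $u\in C$, we get $\ell(u^+)\le\ell(u)=\Phi(u,\theta)$. On the other hand, the first-order inequality for the differentiable strictly concave function $\Phi(\cdot,\theta)$ gives $\Phi(v,\theta)<\ell(v)$ for every $v\neq u$ (one line: compare $\Phi$ at the midpoint of $u$ and $v$ with $\ell$ at that midpoint, using strict concavity). Applying this at $v=u^+\neq u$ and chaining the two inequalities yields $\Phi(u^+,\theta)<\ell(u^+)\le\Phi(u,\theta)$. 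The explicit thresholding form of $u^+$ is not needed here beyond its being an element of $C$.

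For the local-minimality claim, suppose $u^+=u$ and that $\mathrm{argmin}_{i\in[n]}\nabla_{u_{i,j}}\Phi(u,\theta)=\{I(j)\}$ for every $j\in[p]$. Because $u\in C$ and $u=u^+$, necessarily $u_{I(j),j}=1$ and $u_{i,j}=0$ for $i\neq I(j)$, so $u$ is a vertex of $\hat C$, and the singleton hypothesis makes $\gamma:=\min_{j\in[p]}\min_{i\neq I(j)}\bigl(\nabla_{u_{i,j}}\Phi(u,\theta)-\nabla_{u_{I(j),j}}\Phi(u,\theta)\bigr)$ strictly positive. The crux is the bound $\langle\nabla_u\Phi(u,\theta),\,v-u\rangle\ge\tfrac{\gamma}{2}\|v-u\|$ for all $v\in\hat C$. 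To get it, write $d=v-u$; feasibility of $v$ forces $d_{I(j),j}\le0$, $d_{i,j}\ge0$ for $i\neq I(j)$, and $d_{I(j),j}=-\sum_{i\neq I(j)}d_{i,j}$, so substituting eliminates the $I(j)$-components and leaves $\langle\nabla_u\Phi(u,\theta),d\rangle=\sum_j\sum_{i\neq I(j)}\bigl(\nabla_{u_{i,j}}\Phi(u,\theta)-\nabla_{u_{I(j),j}}\Phi(u,\theta)\bigr)d_{i,j}\ge\gamma\sum_j\sum_{i\neq I(j)}d_{i,j}$, while $\|d\|_1=2\sum_j\sum_{i\neq I(j)}d_{i,j}$ (the signs make everything line up) and $\|d\|_1\ge\|d\|$ in finite dimensions; this gives the claimed linear bound. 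Finally, differentiability of $\Phi(\cdot,\theta)$ at $u$ gives $\Phi(v,\theta)=\Phi(u,\theta)+\langle\nabla_u\Phi(u,\theta),v-u\rangle+o(\|v-u\|)$, so on a small enough ball the remainder is below $\tfrac{\gamma}{4}\|v-u\|$ and hence $\Phi(v,\theta)\ge\Phi(u,\theta)+\tfrac{\gamma}{4}\|v-u\|>\Phi(u,\theta)$ for every nearby $v\in\hat C$ with $v\neq u$; thus $u$ is a strict local minimum of $\min_{u\in\hat C}\Phi(u,\theta)$.

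I expect the linear lower bound on the directional derivative to be the one genuine step: it uses the vertex structure of $u$ to see that any feasible move can only lower the coordinate $u_{I(j),j}$ and raise the others, so the positive gradient gaps accumulate with a consistent sign, after which it is just $\ell_1$-versus-$\ell_2$ bookkeeping. It is worth noting that since $\Phi(\cdot,\theta)$ is concave rather than convex, positivity of the directional derivatives by itself would not force a local minimum; the $C^1$ hypothesis of this subsection is genuinely used to absorb the first-order remainder.
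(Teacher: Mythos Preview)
Your proof is correct. The first assertion is handled essentially as in the paper: both arguments chain the strict-concavity gradient inequality $\Phi(u^+,\theta)<\ell(u^+)$ with the optimality of $u^+$ over $C$ to get $\ell(u^+)\le\ell(u)=\Phi(u,\theta)$.

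For the second assertion your route genuinely differs from the paper's. The paper argues that the directional derivative along each vertex direction $d\in C-u$ is strictly positive, picks a common $\epsilon>0$ by finiteness of $C$, and then passes from the finitely many rays $\{u+sd:d\in C-u\}$ to all of $\hat C$ via the convex-hull identity $\hat C-u=\mathrm{conv}(C-u)$ together with concavity of $\Phi(\cdot,\theta)$. Your argument instead exploits the vertex structure of $u$ to produce a \emph{quantitative} linear lower bound $\langle\nabla_u\Phi(u,\theta),v-u\rangle\ge\tfrac{\gamma}{2}\|v-u\|$ valid on the whole polytope $\hat C$, and then absorbs the $o(\|v-u\|)$ Taylor remainder using the $C^1$ hypothesis of the subsection. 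Your approach is more explicit (it yields a coercivity constant) and sidesteps the convex-hull manipulation; the paper's approach uses concavity rather than the first-order Taylor expansion to control the nonlinear part, so it would survive even if $\Phi(\cdot,\theta)$ were merely differentiable at $u$ without the uniform remainder control, though under the standing $C^1$ assumption this distinction is moot.
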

\begin{proof}
	Given $u \in C$, if $u^+ \neq u$, since $\Phi(u, \theta)$ is strictly concave with respect to the variable $u$, it follows from \cite[Exercise 16(a), Section 3.1]{borwein2006convex} that 
	\[
	\Phi(u^+,\theta) < \Phi(u,\theta) + \langle \nabla_u\Phi(u,\theta), u^+ - u \rangle \le \Phi(u,\theta),
	\]
	where the last inequality follows from $ u^+ \in  \mathrm{argmin}_{v \in C} \Phi(u,\theta) + \langle \nabla_u\Phi(u,\theta), v - u \rangle$.
	
	Next, for any $u \in C$, if $u^{+} = u$, we will have $$I_u(u,j):= \{ i \in [n] ~|~ u_{i,j} = 1 \} \subseteq \mathrm{argmin}_{i \in [n]} \nabla_{u_{i,j}}\Phi(u, \theta) .$$
	According to the assumption that $\mathrm{argmin}_{i \in [n]} \nabla_{u_{i,j}}\Phi(u, \theta) $ is a singleton for any $j \in [p]$, we have $$ \nabla_{u_{I_u(u,j),j}} \Phi(u, \theta) < \nabla_{u_{i,j}} \Phi(u, \theta) $$ for any $j \in [p]$ and $i \neq I_u(u,j)$. 
	Then for any $d \in C - u := \{ v - u ~|~ v \in C\}$ satisfying $d \neq 0$, it holds that 
	\[
	\langle \nabla_u \Phi(u, \theta), d \rangle = \sum_{j = 1}^p \left(  \nabla_{u_{I_u(v,j),j}} \Phi(u, \theta) - \nabla_{u_{I_u(u,j),j}} \Phi(u, \theta) \right)  > 0,
	\]
	where $I_u(v, j):= \{ i \in [n] ~|~ v_{i,j} = 1 \}$.
	Therefore, there exists $\epsilon_d >0$ such that $$\Phi(u + sd, \theta) > \Phi(u, \theta), $$ for any $s \in (0,\epsilon_d)$. 
	
	Since set $C$ is with finite elements, we can find a constant $\epsilon > 0$ such that $\Phi(u + sd, \theta) > \Phi(u, \theta) $ for any $d \in  C - u $ and $s \in (0,\epsilon)$. As $\hat{C} = \mathrm{conv}(C)$, thus $\hat{C}-u = \mathrm{conv}(C - u)$ and hence there exists $\epsilon_0 > 0$ such that $$\hat{C}\cap \mathbb{B}_{\epsilon_0}(u) - u = \left( \hat{C}  - u \right)\cap \mathbb{B}_{\epsilon_0}(0) \subseteq  \mathrm{conv}(C - u) \cap \mathbb{B}_{\epsilon}(0).$$
	Then because $\Phi(u + sd, \theta) > \Phi(u, \theta) $ for any $d \in C - u $ and $s \in (0,\epsilon)$ and $\Phi$ is strictly concave with respect to $u$, we get that $\Phi(v, \theta) > \Phi(u, \theta) $ for any $v \in \hat{C}\cap \mathbb{B}_{\epsilon_0}(u) \backslash \{u\}$.  
\end{proof}

\begin{proposition}\label{localmu}
	Let $\{u^{k,t}\}_t$ be the sequence generated by \eqref{th_u} with $u^{k,0} \in C$. Then, there exists a finite $T > 0$ such that $u^{k,T+1}  = u^{k,T}$ and $\Phi(u^{k, T}, \theta^k) \le \Phi(u^{k,0}, \theta^k)$. Additionally, if $\mathrm{argmin}_{i \in [n]} \nabla_{u_{i,j}}\Phi(u^{k,T},\theta^k)$ is a singleton for any $j \in [p]$, then $u^{k,T}$ is a local minimum of $\mathrm{min}_{u \in { \hat{C}}} \Phi(u,\theta^k)$.
\end{proposition}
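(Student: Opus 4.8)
The plan is to combine the one-step descent property of the thresholding map established in Lemma~\ref{lem1} with the finiteness of the feasible set $C$. First I would note that, by the standing assumption of this subsection, $\Phi$ is continuously differentiable with respect to $u$, so $\nabla_u\Phi$ is well defined, and that every iterate $u^{k,t}$ produced by \eqref{th_u} lies in $C$. The key observation is that, in the notation of Lemma~\ref{lem1}, $u^{k,t+1}$ is exactly the point $u^+$ associated with $u = u^{k,t}$ and $\theta = \theta^k$: the rule \eqref{th_u} selects, block $j$ by block $j$, an index $I(j)\in\mathrm{argmin}_{i\in[n]}\nabla_{u_{i,j}}\Phi(u^{k,t},\theta^k)$, which is precisely a minimizer over $C$ of the linearization $\Phi(u^{k,t},\theta^k) + \langle \nabla_u\Phi(u^{k,t},\theta^k), v - u^{k,t}\rangle$ (the linear objective decouples across the blocks $j\in[p]$ and over each block $C$ restricts $u_{\cdot,j}$ to the standard simplex vertices). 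Hence Lemma~\ref{lem1} applies and gives $\Phi(u^{k,t+1},\theta^k) < \Phi(u^{k,t},\theta^k)$ whenever $u^{k,t+1}\neq u^{k,t}$.

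Next I would establish finite termination. Since $|C|<\infty$, the set of real values $\{\Phi(u,\theta^k) : u\in C\}$ is finite, and the map $t\mapsto \Phi(u^{k,t},\theta^k)$ is strictly decreasing as long as $u^{k,t+1}\neq u^{k,t}$; a strictly decreasing sequence drawn from a finite set cannot be infinite, so there is a first index $T$ with $u^{k,T+1}=u^{k,T}$. (Equivalently, the iteration cannot cycle: a cycle of length $\ge 2$ would force $\Phi(\cdot,\theta^k)$ to decrease strictly around the cycle yet return to its starting value, a contradiction.) Telescoping the inequalities $\Phi(u^{k,t+1},\theta^k)\le\Phi(u^{k,t},\theta^k)$ for $t=0,\dots,T-1$ then yields $\Phi(u^{k,T},\theta^k)\le\Phi(u^{k,0},\theta^k)$, which is the second assertion.

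Finally, for the local-minimality claim I would feed the terminal iterate into the second half of Lemma~\ref{lem1}. At $u=u^{k,T}$ one has $u^+ = u^{k,T+1} = u^{k,T} = u$, so the hypothesis ``$u^+=u$'' of Lemma~\ref{lem1} is met; combining this with the assumed singleton property of $\mathrm{argmin}_{i\in[n]}\nabla_{u_{i,j}}\Phi(u^{k,T},\theta^k)$ for every $j\in[p]$, Lemma~\ref{lem1} concludes directly that $u^{k,T}$ is a (strict) local minimum of $\min_{u\in\hat C}\Phi(u,\theta^k)$.

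I do not expect a genuine obstacle here: the only points needing care are (i) checking that the coordinate-wise thresholding rule \eqref{th_u} really coincides with the linearized-subproblem minimizer $u^+$ of Lemma~\ref{lem1}, which is a routine block-by-block verification, and (ii) making the ``a strictly decreasing sequence in a finite set must terminate'' step fully rigorous, i.e. explicitly excluding cycling. Both are immediate once Lemma~\ref{lem1} is available, so the proposition is essentially a corollary of Lemma~\ref{lem1} plus $|C|<\infty$.
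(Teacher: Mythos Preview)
Your proposal is correct and follows essentially the same approach as the paper's own proof: invoke Lemma~\ref{lem1} to obtain strict descent whenever $u^{k,t+1}\neq u^{k,t}$, use finiteness of $C$ to force termination at some $T$, and then apply the second half of Lemma~\ref{lem1} (with the singleton hypothesis) to conclude local minimality. Your write-up is in fact more explicit than the paper's on why \eqref{th_u} coincides with the $u^+$ of Lemma~\ref{lem1} and on the telescoping/non-cycling step, but there is no substantive difference in strategy.
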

\begin{proof}
	As shown in Lemma \ref{lem1}, if $u^{k,t+1} \neq u^{k,t} $, we will have  $$\Phi(u^{k,t+1},\theta^k) < \Phi(u^{k,t},\theta^k).$$
	
	Then, since $u^{k,t} \in C$  and $|C|$ is finite, there must exist $T > 0$ such that $u^{k,T+1} = u^{k,T}$, and the conclusion follows from Lemma \ref{lem1}.
\end{proof}

\subsubsection{Convergence analysis}

In this part, we present the basic convergence result for Algorithm \ref{alg3}. Firstly, as established in Proposition \ref{localmu}, $\Phi(u^{k+1}, \theta^k) \le \Phi(u^k, \theta^k)$. This, combined with the fact that $\theta^{k+1}$ satisfies \eqref{iAM2}, ensures that $\Phi(u^{k+1}, \theta^{k+1}) \le \Phi(u^k, \theta^k)$, indicating a non-increasing energy trend throughout the iteration. Considering that $u^k$ is within the bounded set $C$, once $\Phi(u, \theta)$ is coercive with respect to variable $\theta$ for any $u \in C$, that is,  for any $u \in C$, $\Phi(u, \theta) \rightarrow \infty$ as $\| \theta\| \rightarrow \infty$, the boundedness of the generated sequence $\{(u^k, \theta^k)\}$ is ensured, thereby guaranteeing the existence of the limit point of the sequence $\{(u^k, \theta^k)\}$ generated by Algorithm \ref{alg3}.

\begin{theorem}\label{thm_conv_inexact}
	Let $\{(u^k, \theta^k)\}$ be the sequence generated by alternating minimization Algorithm \ref{alg3} that satisfies condition \eqref{iAM2} with  $\varepsilon_k \rightarrow 0$.
	Consider any limit point  $(\bar{u}, \bar{\theta})$ of sequence $\{(u^k, \theta^k)\}$. Then $\bar{u} \in C$.
	 Suppose that $\|\theta^{k+1}-\theta^k\| \rightarrow 0$, and that $\mathrm{argmin}_{i \in [n]} \nabla_{u_{i,j}}\Phi(\bar{u}, \bar{\theta})$ is a singleton for any $j \in [p]$,
	 then $(\bar{u}, \bar{\theta})$  is a local minimizer of problem \eqref{prb2} in the sense that there exists $\epsilon > 0$ such that
	\[
	\Phi(\bar{u}, \bar{\theta}) \le \Phi(u, \theta), \quad \forall (u,\theta) \in \left(  \hat{C} \cap \mathbb{B}_{{\epsilon}}(\bar{u})  \right) \times \left( \mathcal{S} \cap \mathbb{B}_{\epsilon}(\bar{\theta}) \right) .
	\]
\end{theorem}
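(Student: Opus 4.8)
The plan is to split the argument into two independent pieces, mirroring the structure of the other convergence theorems: first show that every limit point $\bar u$ lies in $C$, and then show the local-minimality statement under the extra hypotheses $\|\theta^{k+1}-\theta^k\|\to 0$, $\varepsilon_k\to 0$, and uniqueness of $\mathrm{argmin}_{i\in[n]}\nabla_{u_{i,j}}\Phi(\bar u,\bar\theta)$. For the first part, observe that by construction each $u^{k+1}$ is produced by the thresholding loop in Step 1, so $u^{k+1}\in C$ for every $k$; since $C$ is a finite set it is closed, so any limit point of $\{u^k\}$ must itself be in $C$. This is the easy piece and requires essentially no work beyond quoting Proposition \ref{localmu}.

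Next I would set up the limiting objects. Take a subsequence $\{(u^{k_\ell},\theta^{k_\ell})\}$ converging to $(\bar u,\bar\theta)$. Because $C$ is finite and $u^{k_\ell}\to\bar u\in C$, we have $u^{k_\ell}=\bar u$ for all large $\ell$. I want to pass this information to the $\theta$-side and to the $u$-update at the next iterate. First, since the energy $\Phi(u^k,\theta^k)$ is non-increasing (Proposition \ref{localmu} gives $\Phi(u^{k+1},\theta^k)\le\Phi(u^k,\theta^k)$, and \eqref{iAM2} gives $\Phi(u^{k+1},\theta^{k+1})\le\Phi(u^{k+1},\theta^k)$) and is assumed bounded below, the whole sequence $\{\Phi(u^k,\theta^k)\}$ converges; in particular the per-step decreases tend to zero. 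Using $\|\theta^{k+1}-\theta^k\|\to 0$, along the subsequence we also get $\theta^{k_\ell+1}\to\bar\theta$, so by continuity of $\Phi$ the next iterate satisfies $u^{k_\ell+1}\to$ some point of $C$ and, combined with vanishing energy decrease plus Assumption-free finiteness of $C$, in fact $u^{k_\ell+1}=\bar u$ for large $\ell$ as well. Then the inexact-stationarity condition \eqref{iAM2} at step $k_\ell$ reads $\mathrm{dist}(0,\partial_\theta\Phi(\bar u,\theta^{k_\ell+1})+\mathcal N_{\mathcal S}(\theta^{k_\ell+1}))\le\varepsilon_{k_\ell+1}$; letting $\ell\to\infty$ and using outer semicontinuity of $\partial_\theta\Phi(\bar u,\cdot)+\mathcal N_{\mathcal S}(\cdot)$ (convexity of $\Phi$ in $\theta$ and of $\mathcal S$) gives $0\in\partial_\theta\Phi(\bar u,\bar\theta)+\mathcal N_{\mathcal S}(\bar\theta)$, i.e. $\bar\theta=\mathrm{argmin}_{\theta\in\mathcal S}\Phi(\bar u,\theta)$ by convexity.

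For the $u$-side I would apply Lemma \ref{lem1} with $u=\bar u$, $\theta=\bar\theta$: the thresholding fixed-point relation for $\bar u$ at $\bar\theta$ needs to be established. Since $u^{k_\ell+1}=\bar u=u^{k_\ell}$ for large $\ell$, the inner loop at iteration $k_\ell$ terminated immediately, meaning $\bar u$ is already a fixed point of the thresholding map \eqref{th_u} at $\theta^{k_\ell}$; passing to the limit (continuity of $\nabla_u\Phi$) and using that $\mathrm{argmin}_{i\in[n]}\nabla_{u_{i,j}}\Phi(\bar u,\bar\theta)$ is a singleton, $\bar u$ is a fixed point of the thresholding map at $\bar\theta$. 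Lemma \ref{lem1} then yields that $\bar u$ is a strict local minimum of $\min_{u\in\hat C}\Phi(u,\bar\theta)$, say on $\hat C\cap\mathbb B_{\epsilon_1}(\bar u)$. Finally I would glue the two halves exactly as in the proof of Proposition \ref{prop_min}: for $(u,\theta)\in(\hat C\cap\mathbb B_\epsilon(\bar u))\times(\mathcal S\cap\mathbb B_\epsilon(\bar\theta))$ with $\epsilon\le\epsilon_1$, the chain $\Phi(\bar u,\bar\theta)\le\Phi(\bar u,\theta)\le\Phi(u,\theta)$ would close the argument — except the second inequality is not immediate here, since unlike the global-$u$ case we only know $\bar u$ is a \emph{local} minimizer in $u$, not a global one. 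The main obstacle is therefore this coupling step: I expect to need a stability lemma analogous to Lemma \ref{lem_min} but for the thresholding setting, showing that for $\theta$ near $\bar\theta$ the set of local (thresholding) minima near $\bar u$ still collapses onto $\{\bar u\}$, which is where the singleton-$\mathrm{argmin}$ assumption and finiteness of $C$ do the real work; once that local stability is in hand, the final contradiction with $\bar\theta=\mathrm{argmin}_\theta\Phi(\bar u,\theta)$ goes through verbatim.
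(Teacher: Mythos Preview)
Your overall architecture matches the paper's, and you correctly isolate the final coupling step as the real difficulty. But there is a genuine gap in the middle.

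The problematic step is your claim that $u^{k_\ell+1}=\bar u$ for all large $\ell$. Your justification (``vanishing energy decrease plus finiteness of $C$'') only yields that any subsequential limit $u^*\in C$ of $\{u^{k_\ell+1}\}$ satisfies $\Phi(u^*,\bar\theta)=\Phi(\bar u,\bar\theta)$; without Assumption~\ref{ass1} (which is \emph{not} hypothesized in this theorem) this does not force $u^*=\bar u$. You then build both the $\theta$-optimality argument (via \eqref{iAM2} at step $k_\ell$, which involves $u^{k_\ell+1}$) and the thresholding fixed-point argument on this unproven claim, so both inherit the gap.

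The paper sidesteps this entirely by shifting the index backward rather than forward. For the $\theta$-side, condition \eqref{iAM2} applied at step $k_\ell-1$ already reads $\mathrm{dist}\bigl(0,\partial_\theta\Phi(u^{k_\ell},\theta^{k_\ell})+\mathcal N_{\mathcal S}(\theta^{k_\ell})\bigr)\le\varepsilon_{k_\ell}$; since $u^{k_\ell}=\bar u$ directly, one passes to the limit using convexity (the subgradient inequality) and continuity to obtain $\Phi(\bar u,\bar\theta)\le\Phi(\bar u,\theta)$ for all $\theta\in\mathcal S$. For the $u$-side, the termination criterion of the inner loop at iteration $k_\ell-1$ says that $u^{k_\ell}$ itself is a thresholding fixed point at $\theta^{k_\ell-1}$ (not at $\theta^{k_\ell}$); the hypothesis $\|\theta^{k+1}-\theta^k\|\to 0$ is invoked precisely here, to get $\theta^{k_\ell-1}\to\bar\theta$ and hence pass the inequalities $\nabla_{u_{I_u(\bar u,j),j}}\Phi(\bar u,\theta^{k_\ell-1})\le\nabla_{u_{i,j}}\Phi(\bar u,\theta^{k_\ell-1})$ to the limit. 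No statement about $u^{k_\ell+1}$ is ever needed.

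Your final paragraph is on target. The paper carries out the stability step you anticipate not via a separate lemma but by a direct compactness-and-contradiction argument inside the proof: if $\bar u\notin\mathrm{argmin}_{u\in\hat C\cap\mathbb B_{\epsilon_u}(\bar u)}\Phi(u,\theta_\epsilon)$ for $\theta_\epsilon$ arbitrarily close to $\bar\theta$, one extracts (using Lemma~\ref{lem_r}) a sequence $(u_\ell,\theta_\ell)$ with $u_\ell\in(\hat C\cap\mathrm{bdy}\,\mathbb B_{\epsilon_u}(\bar u))\cup(C\setminus\{\bar u\}\cap\mathbb B_{\epsilon_u}(\bar u))$, $\theta_\ell\to\bar\theta$, and $\Phi(u_\ell,\theta_\ell)<\Phi(\bar u,\theta_\ell)$; a limit point $\tilde u\neq\bar u$ then contradicts the strict local minimality from Lemma~\ref{lem1}. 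Once that is in hand, the concluding chain $\Phi(\bar u,\bar\theta)\le\Phi(\bar u,\theta_0)\le\Phi(u_0,\theta_0)$ is exactly as you describe.
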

\begin{proof}
	Let $(\bar{u}, \bar{\theta})$ be any limit point of sequence $\{(u^k, \theta^k)\}$ and $\{(u^l, \theta^l)\}$ be the subsequence of $\{(u^k, \theta^k)\}$ such that $(u^l, \theta^l) \rightarrow (\bar{u}, \bar{\theta})$. Since for each $k$, $\theta^{k}$ satisfies \eqref{iAM2}, there exists $\xi^{k} \in \partial_\theta \Phi(u^{k},\theta^{k}) + \mathcal{N}_{\mathcal S}(\theta^{k})$ such that $\| \xi^k \| \le \varepsilon_{k}$. And by the convexity of $\Phi$ with respect to $\theta$, for any $\theta \in \mathcal S$,
	\[
	\Phi(u^{l},\theta^{l}) + \langle \xi^l, \theta  - \theta^{l} \rangle \le \Phi(u^{l},\theta).
	\]
	Since $\Phi$ is assumed to be continuous on $\mathcal S$, $(u^l, \theta^l) \rightarrow (\bar{u}, \bar{\theta})$ and $\xi^l \rightarrow 0$, by taking $l \rightarrow \infty$, we can obtain from the above inequality that
	\[
	\Phi(\bar{u}, \bar{\theta}) \le \Phi(\bar{u},\theta), \quad \forall \theta \in \mathcal S.
	\]
	Next, since $u^l$ satisfies
	\[
	u^l \in \mathrm{argmin}_{u \in C} \Phi(u^{l},\theta^{l-1}) + \langle \nabla_u\Phi(u^{l},\theta^{l-1}), u - u^{l} \rangle,
	\]
	it follows that, for any $j \in [p]$ and $I_u(u^l,j) := \{ i \in [n] ~|~ u^l_{i,j} = 1 \}$,
	\begin{equation}\label{thm_conv_inexact_eq1}
			 \nabla_{u_{I_u(u^l,j),j}} \Phi(u^l, \theta^{l-1}) \le \nabla_{u_{i,j}} \Phi(u^l, \theta^{l-1}), \quad \forall i \in [n].
	\end{equation}
	 Because $u^l \in C$, and thus there exists $\bar{u} \in C$ such that $u^l = \bar{u}$ for all sufficiently large $l$. 
	 By taking $l \rightarrow \infty$ in \eqref{thm_conv_inexact_eq1}, it follows from the continuity of $\Phi$, $\|\theta^{k+1}-\theta^k\| \rightarrow 0$ and $\theta^l \rightarrow \bar{\theta}$, that for any $j \in [p]$ and $I_u(\bar{u},j) := \{ i \in [n] ~|~ \bar{u}_{i,j} = 1 \}$,
	 \begin{equation}\label{}
	 	\nabla_{u_{I_u(\bar{u},j),j}} \Phi(\bar{u}, \bar{\theta}) \le \nabla_{u_{i,j}} \Phi(\bar{u}, \bar{\theta}), \quad \forall i \in [n].
	 \end{equation}
	This implies that
	\[
	\bar{u} \in  \mathrm{argmin}_{u \in C} \Phi(\bar{u}, \bar{\theta}) + \langle \nabla_u\Phi(\bar{u}, \bar{\theta}), u - \bar{u} \rangle.
	\]
	Then it follows from the assumption that $\mathrm{argmin}_{i \in [n]} \nabla_{u_{i,j}}\Phi(\bar{u}, \bar{\theta})$ is a singleton for any $j \in [p]$ and Lemma \ref{lem1} that there exists  $\epsilon_u >0$ such that
	\begin{equation}\label{eq1}
	\Phi(\bar{u}, \bar{\theta})  < \Phi(v, \bar{\theta}), \quad \forall v \in \hat{C}\cap \mathbb{B}_{\epsilon_u}(\bar{u}) \backslash \{\bar{u}\}.
	\end{equation}
	We now show that there exists $\epsilon > 0$ such that for any $\theta_\epsilon \in \mathcal{S} \cap \mathbb{B}_{\epsilon}(\bar{\theta}) $, 
	$$\bar{u}  \in \mathrm{argmin}_{u \in \hat{C} \cap \mathbb{B}_{\epsilon_u}(\bar{u}) } \Phi(u,\theta_\epsilon). $$
	If this is not the case, according to Lemma \ref{lem_r}, there must be sequence $\{(u_\ell, \theta_\ell)\}$ such that $u_\ell \in (\hat{C} \cap \mathrm{bdy}\ \mathbb{B}_{\epsilon_u}(\bar{u}) ) \cup (C\backslash\{\bar{u}\} \cap\mathbb{B}_{\epsilon_u}(\bar{u}) )$, $\theta_\ell \in \mathcal{S}$, $\theta_\ell \rightarrow \bar{\theta}$ and 
	\[
	\Phi(u_\ell, \theta_\ell) < \Phi(\bar{u},\theta_\ell).
	\]
	Since $\{u_\ell\}$ is bounded, let $\tilde{u}$ be a limit point of $\{u_\ell\}$. We have $\tilde{u} \in (\hat{C} \cap \mathrm{bdy}\ \mathbb{B}_{\epsilon_u}(\bar{u}) ) \cup (C\backslash\{\bar{u}\} \cap\mathbb{B}_{\epsilon_u}(\bar{u}) ) \subseteq \hat{C}\cap \mathbb{B}_{\epsilon_u}(\bar{u}) \backslash \{\bar{u}\}$. Taking $l \rightarrow \infty$ in above inequality, the continuity of $\Phi$ on its domain and $\theta_\ell \rightarrow \bar{\theta}$ yields that 
	\[
	\Phi(\tilde{u}, \bar{\theta}) \le \Phi(\bar{u},\bar{\theta}),
	\]
	which contradicts \eqref{eq1}. 	
	
	If there exists $(u_0, \theta_0) \in \left( \hat{C} \cap \mathbb{B}_{\epsilon_u}(\bar{u}) \right) \times \left( \mathcal{S} \cap \mathbb{B}_{\epsilon}(\bar{\theta}) \right)$ such that $\Phi(u_0, \theta_0) < \Phi(\bar{u}, \bar{\theta})$, since $\bar{u} \in \mathrm{argmin}_{u \in \hat{C} \cap \mathbb{B}_{\epsilon_u}(\bar{u})} \Phi(u,\theta_0) $, then $$\Phi(\bar{u}, \bar{\theta}) \le \Phi(\bar{u} , \theta_0) \le \Phi(u_0, \theta_0) < \Phi(\bar{u}, \bar{\theta}),$$ which arrives at a contradiction and we get the conclusion. 
\end{proof}

Because the functions $f_{i,j}(\theta)$ usually vary distinctly for each $i, j$ in the problem \eqref{prb2} derived from \eqref{EnergyGeneral}, we introduce the following mild assumption.

\begin{assumption}\label{ass2}
	For any $u \in C$, $\theta \in \mathcal S$, $j \in [p]$ and $i_1, i_2 \in [n]$, if $i_1 \neq i_2$, then $	\nabla_{u_{i_1,j}}\Phi({u}, {\theta}) \neq  	\nabla_{u_{i_2,j}}\Phi({u}, {\theta})$.
\end{assumption}

With Assumption \ref{ass2} fulfilled, the assumption that $\mathrm{argmin}_{i \in [n]} \nabla_{u_{i,j}}\Phi(\bar{u}, \bar{\theta})$ is a singleton for any $j \in [p]$ in Theorem \ref{thm_conv_inexact} holds. Consequently, we can straightforwardly derive the following result from Theorem \ref{thm_conv_inexact}.

\begin{corollary}\label{coro1}
Suppose Assumption \ref{ass2} is satisfied. Let $\{(u^k, \theta^k)\}$ be the sequence generated by alternating minimization Algorithm \ref{alg3} that satisfies condition \eqref{iAM2} with  $\varepsilon_k \rightarrow 0$.
Consider any limit point  $(\bar{u}, \bar{\theta})$ of sequence $\{(u^k, \theta^k)\}$. Then $\bar{u} \in C$.
Suppose that $\|\theta^{k+1}-\theta^k\| \rightarrow 0$, 
then $(\bar{u}, \bar{\theta})$  is a local minimizer of problem \eqref{prb2} in the sense that there exists $\epsilon > 0$ such that
\[
\Phi(\bar{u}, \bar{\theta}) \le \Phi(u, \theta), \quad \forall (u,\theta) \in \left(  \hat{C} \cap \mathbb{B}_{{\epsilon}}(\bar{u})  \right) \times \left( \mathcal{S} \cap \mathbb{B}_{\epsilon}(\bar{\theta}) \right) .
\]
\end{corollary}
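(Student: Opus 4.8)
The plan is to derive Corollary \ref{coro1} directly from Theorem \ref{thm_conv_inexact} by verifying that Assumption \ref{ass2} implies the only nontrivial hypothesis of that theorem which is not already assumed, namely that $\mathrm{argmin}_{i \in [n]} \nabla_{u_{i,j}}\Phi(\bar{u}, \bar{\theta})$ is a singleton for every $j \in [p]$. Indeed, Theorem \ref{thm_conv_inexact} already gives us, for any limit point $(\bar{u},\bar{\theta})$ of the sequence generated by Algorithm \ref{alg3} under \eqref{iAM2} with $\varepsilon_k \to 0$, that $\bar{u} \in C$; it additionally concludes the local minimality statement under the extra hypotheses $\|\theta^{k+1}-\theta^k\| \to 0$ and the singleton condition on the argmin of the partial gradients at $(\bar u,\bar\theta)$. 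The corollary keeps the hypothesis $\|\theta^{k+1}-\theta^k\| \to 0$ but replaces the singleton condition by the (structural, easier to check in practice) Assumption \ref{ass2}. So the whole task reduces to the implication ``Assumption \ref{ass2} $\Rightarrow$ singleton argmin''.

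First I would fix an arbitrary limit point $(\bar u, \bar\theta)$; by Theorem \ref{thm_conv_inexact} we have $\bar u \in C \subseteq \hat C \subseteq \mathcal S$-compatible domain, so in particular $(\bar u,\bar\theta) \in C \times \mathcal S$, which is where Assumption \ref{ass2} applies. Then I would fix an arbitrary $j \in [p]$ and consider the finite list of numbers $\nabla_{u_{1,j}}\Phi(\bar u,\bar\theta), \ldots, \nabla_{u_{n,j}}\Phi(\bar u,\bar\theta)$. Assumption \ref{ass2}, applied with $u = \bar u$ and $\theta = \bar\theta$, asserts precisely that these $n$ numbers are pairwise distinct. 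A finite set of pairwise distinct real numbers has a unique minimizer, so $\mathrm{argmin}_{i \in [n]} \nabla_{u_{i,j}}\Phi(\bar u,\bar\theta)$ is a singleton. Since $j$ was arbitrary, the singleton condition holds for all $j \in [p]$, and all hypotheses of Theorem \ref{thm_conv_inexact} are met; invoking it yields the desired $\epsilon > 0$ and the inequality $\Phi(\bar u,\bar\theta) \le \Phi(u,\theta)$ for all $(u,\theta) \in (\hat C \cap \mathbb{B}_\epsilon(\bar u)) \times (\mathcal S \cap \mathbb{B}_\epsilon(\bar\theta))$, which is exactly the conclusion of the corollary.

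There is essentially no obstacle here: the argument is a one-line reduction plus the trivial observation that distinct reals have a unique minimum. The only thing to be slightly careful about is that Assumption \ref{ass2} is stated for $u \in C$ (not merely $u \in \hat C$), so one must use the already-established fact $\bar u \in C$ from the first part of Theorem \ref{thm_conv_inexact} before applying it; but that fact is granted unconditionally by the theorem, so no circularity arises. Consequently I would simply write: ``By Theorem \ref{thm_conv_inexact}, $\bar u \in C$. Fix $j \in [p]$; by Assumption \ref{ass2} the values $\{\nabla_{u_{i,j}}\Phi(\bar u,\bar\theta)\}_{i \in [n]}$ are pairwise distinct, hence $\mathrm{argmin}_{i\in[n]}\nabla_{u_{i,j}}\Phi(\bar u,\bar\theta)$ is a singleton. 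As $j$ is arbitrary, the remaining hypothesis of Theorem \ref{thm_conv_inexact} holds, and the conclusion follows.''
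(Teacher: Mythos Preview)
Your proposal is correct and matches the paper's own reasoning: the paper states just before the corollary that Assumption~\ref{ass2} ensures the singleton condition on $\mathrm{argmin}_{i\in[n]}\nabla_{u_{i,j}}\Phi(\bar u,\bar\theta)$ and then derives the result directly from Theorem~\ref{thm_conv_inexact}. Your additional care in noting that $\bar u\in C$ (so Assumption~\ref{ass2} applies) is a reasonable point to make explicit.
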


\subsubsection{Analysis  on updating $\theta$ with one-step acceleration}

Finding the approximate stationary point during $\theta$  update in Algorithm \ref{alg3} can be challenging due to the vanishing accuracy $\{\varepsilon_k\}$. We therefore propose a simple acceleration scheme for the $\theta$ update
given that $\Phi$ satisfies the following Assumption \eqref{assum_smooth}. 
\begin{assumption}\label{assum_smooth}
$\Phi$ is a continuously differentiable and $\nabla_\theta \Phi$ is $L_{\theta}$-Lipschitz continuous.
\end{assumption}

The idea is,  during $\theta$ update in Algorithm \ref{alg3}, to approximate the stationary solution to problem $$\mathrm{min}_{\theta \in \mathcal S} \Phi(u^{k+1},\theta)$$
by using only a single projected gradient  iteration step, without solving the above subproblem until the accuracy $\{\varepsilon_k\}$ is met as in \eqref{iAM2}. The iterative procedure is outlined in Algorithm~\ref{alg4}.

\begin{algorithm}[ht!]
	\begin{flushleft}
        {\bf Input:}{ $(u^0, \theta^0) \in C \times \mathcal{S}$, $\alpha_\theta \in (0,2/L_{\theta})$ }
        
	{\bf Output:} { ($\bar u, \bar \theta$)  that approximately minimizes  \eqref{prb2} .}
	
	Set $k=0$\;

{\bf While} { $\|u^{k+1} - u^k\| + \|\theta^{k+1} - \theta^k\| > tol$
	}
	
	{\bf 1.} Find $u^{k+1} \in C$  by using \eqref{th_u} with $ u^{k,0} =  u^{k}$ until $u^{k,T+1}  = u^{k,T}$. Set $u^{k+1} = u^{k,T}$.
				
            {\bf 2.} Update $\theta^{k+1}$ by a single projected gradient step as
		\begin{equation}\label{subprob2_pg}
			\theta^{k+1} = \mathrm{Proj}_{\mathcal{S}} \left( \theta^k - \alpha_\theta \nabla_\theta \Phi(\theta^k,u^{k+1}) \right).
		\end{equation}
		
		Set $k = k+1$\;
\end{flushleft}
	\caption{An one-step acceleration version of  Algorithm \ref{alg3} }  \label{alg4}
\end{algorithm}

\begin{theorem}\label{thm_min3}
Suppose Assumption \ref{assum_smooth} is satisfied and $\Phi$ is bounded below on $\hat{C}\times \mathcal{S}$. Let $\{(u^k, \theta^k)\}$ be the sequence generated by alternating minimization Algorithm \ref{alg4}. 
Consider any limit point  $(\bar{u}, \bar{\theta})$ of sequence $\{(u^k, \theta^k)\}$. Then $\bar{u} \in C$.
Suppose that $\mathrm{argmin}_{i \in [n]} \nabla_{u_{i,j}}\Phi(\bar{u}, \bar{\theta})$ is a singleton for any $j \in [p]$, or Assumption \ref{ass2} is satisfied.
Then $(\bar{u}, \bar{\theta})$ is a local minimizer of problem \eqref{prb2} in the sense that there exists $\epsilon > 0$ such that
\[
\Phi(\bar{u}, \bar{\theta}) \le \Phi(u, \theta), \quad \forall (u,\theta) \in \left(  \hat{C} \cap \mathbb{B}_{\epsilon}(\bar{u})  \right) \times \left( \mathcal{S} \cap \mathbb{B}_{\epsilon}(\bar{\theta}) \right) .
\]
\end{theorem}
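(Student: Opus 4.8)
The plan is to follow the same template as the proof of Theorem~\ref{thm_conv_inexact}; the only genuinely new ingredient is to verify that a single projected–gradient step still forces $\|\theta^{k+1}-\theta^k\|\to 0$ and still delivers, in the limit, the stationarity condition $0\in\nabla_\theta\Phi(\bar u,\bar\theta)+\mathcal N_{\mathcal S}(\bar\theta)$ for the convex $\theta$-subproblem $\min_{\theta\in\mathcal S}\Phi(\bar u,\theta)$. Once these two facts are in hand, the handling of the $u$-variable and the gluing argument are copied almost verbatim from the proof of Theorem~\ref{thm_conv_inexact}. Note also that $\bar u\in C$ is immediate, since every $u^k$ lies in the finite (hence closed) set $C$.

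\textbf{Step 1: sufficient decrease.} First I would record a descent estimate for the $\theta$-step. Writing $\theta^{k+1}=\mathrm{Proj}_{\mathcal S}(\theta^k-\alpha_\theta\nabla_\theta\Phi(u^{k+1},\theta^k))$ and testing the projection inequality at $\theta^k$ gives $\langle\nabla_\theta\Phi(u^{k+1},\theta^k),\theta^{k+1}-\theta^k\rangle\le-\tfrac{1}{\alpha_\theta}\|\theta^{k+1}-\theta^k\|^2$. Combining this with the descent lemma for the $L_\theta$-Lipschitz gradient $\nabla_\theta\Phi$ (Assumption~\ref{assum_smooth}) yields
\[
\Phi(u^{k+1},\theta^{k+1})\le\Phi(u^{k+1},\theta^k)-\Bigl(\tfrac{1}{\alpha_\theta}-\tfrac{L_\theta}{2}\Bigr)\|\theta^{k+1}-\theta^k\|^2 ,
\]
and $c_\theta:=\tfrac{1}{\alpha_\theta}-\tfrac{L_\theta}{2}>0$ because $\alpha_\theta\in(0,2/L_\theta)$. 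Since Proposition~\ref{localmu} gives $\Phi(u^{k+1},\theta^k)\le\Phi(u^k,\theta^k)$, we get $\Phi(u^{k+1},\theta^{k+1})\le\Phi(u^k,\theta^k)-c_\theta\|\theta^{k+1}-\theta^k\|^2$. Telescoping and using that $\Phi$ is bounded below on $\hat C\times\mathcal S$ (with $u^k\in C\subseteq\hat C$) yields $\sum_k\|\theta^{k+1}-\theta^k\|^2<\infty$, hence $\|\theta^{k+1}-\theta^k\|\to 0$.

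\textbf{Step 2: stationarity at a limit point.} Let $(u^l,\theta^l)\to(\bar u,\bar\theta)$ be a convergent subsequence. Since $C$ is finite, $u^l=\bar u\in C$ for all large $l$, and by Step~1 also $\theta^{l-1}\to\bar\theta$. Rewriting the projected-gradient step as $\tfrac{\theta^{l-1}-\theta^l}{\alpha_\theta}-\nabla_\theta\Phi(u^l,\theta^{l-1})\in\mathcal N_{\mathcal S}(\theta^l)$ and passing to the limit — using continuity of $\nabla_\theta\Phi$, $\theta^{l-1}-\theta^l\to 0$, and the closed-graph property of the normal-cone map $\mathcal N_{\mathcal S}(\cdot)$ of the closed convex set $\mathcal S$ — gives $-\nabla_\theta\Phi(\bar u,\bar\theta)\in\mathcal N_{\mathcal S}(\bar\theta)$, i.e.\ $\bar\theta\in\mathrm{argmin}_{\theta\in\mathcal S}\Phi(\bar u,\theta)$, so $\Phi(\bar u,\bar\theta)\le\Phi(\bar u,\theta)$ for all $\theta\in\mathcal S$. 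Meanwhile, the inner thresholding loop terminates at $u^{k,T+1}=u^{k,T}=u^{k+1}$, so $I_u(u^{l},j)\subseteq\mathrm{argmin}_{i\in[n]}\nabla_{u_{i,j}}\Phi(u^{l},\theta^{l-1})$ for every $j\in[p]$; passing to the limit yields $\bar u\in\mathrm{argmin}_{u\in C}\Phi(\bar u,\bar\theta)+\langle\nabla_u\Phi(\bar u,\bar\theta),u-\bar u\rangle$. Since Assumption~\ref{ass2} (or the explicit singleton hypothesis) makes $\mathrm{argmin}_{i\in[n]}\nabla_{u_{i,j}}\Phi(\bar u,\bar\theta)$ a singleton for each $j$, Lemma~\ref{lem1} provides $\epsilon_u>0$ with $\Phi(\bar u,\bar\theta)<\Phi(v,\bar\theta)$ for all $v\in\hat C\cap\mathbb B_{\epsilon_u}(\bar u)\setminus\{\bar u\}$.

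\textbf{Step 3: gluing.} We are now exactly at the situation reached just after \eqref{eq1} in the proof of Theorem~\ref{thm_conv_inexact}, and I would reproduce that argument: a contradiction/limit argument invoking Lemma~\ref{lem_r} and the strict local minimality of $\bar u$ produces $\epsilon>0$ such that $\bar u\in\mathrm{argmin}_{u\in\hat C\cap\mathbb B_{\epsilon_u}(\bar u)}\Phi(u,\theta_\epsilon)$ for every $\theta_\epsilon\in\mathcal S\cap\mathbb B_\epsilon(\bar\theta)$; then any hypothetical $(u_0,\theta_0)\in(\hat C\cap\mathbb B_{\epsilon_u}(\bar u))\times(\mathcal S\cap\mathbb B_\epsilon(\bar\theta))$ with $\Phi(u_0,\theta_0)<\Phi(\bar u,\bar\theta)$ would give $\Phi(\bar u,\bar\theta)\le\Phi(\bar u,\theta_0)\le\Phi(u_0,\theta_0)<\Phi(\bar u,\bar\theta)$, a contradiction, establishing local minimality on $(\hat C\cap\mathbb B_{\epsilon'}(\bar u))\times(\mathcal S\cap\mathbb B_{\epsilon'}(\bar\theta))$ with $\epsilon'=\min\{\epsilon,\epsilon_u\}$. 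I expect the main obstacle to be Steps~1–2: making the sufficient-decrease estimate airtight (it hinges on the step-size bound $\alpha_\theta<2/L_\theta$ and the Lipschitz property) and correctly invoking the closed-graph property of $\mathcal N_{\mathcal S}$ when passing to the limit in the projected-gradient inclusion; the rest is a transcription of the proof of Theorem~\ref{thm_conv_inexact}.
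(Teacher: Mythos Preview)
Your proposal is correct and follows essentially the same approach as the paper: establish the sufficient-decrease estimate to get $\|\theta^{k+1}-\theta^k\|\to 0$, extract from the projected-gradient step the stationarity condition at the limit, and then invoke the machinery of Theorem~\ref{thm_conv_inexact}. The only cosmetic difference is packaging: the paper observes that the projected-gradient inclusion yields $\mathrm{dist}\bigl(0,\nabla_\theta\Phi(u^{k+1},\theta^{k+1})+\mathcal N_{\mathcal S}(\theta^{k+1})\bigr)\to 0$, so condition~\eqref{iAM2} is satisfied with vanishing $\varepsilon_k$, and then cites Theorem~\ref{thm_conv_inexact} and Corollary~\ref{coro1} wholesale, whereas you pass to the limit directly via the closed-graph property of $\mathcal N_{\mathcal S}$ and then reproduce the remainder of that proof by hand.
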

\begin{proof}
Since $\Phi$ is convex with respect to $\theta$ and $\nabla_\theta \Phi$ is $L_{\theta}$-Lipschitz continuous, by the celebrated sufficient decrease lemma for convex function, we have that $$\Phi(u^{k+1},\theta^{k+1}) \le \Phi(u^{k+1},\theta^{k})- (\frac{1}{\alpha_{\theta}} - \frac{L_{\theta}}{2})\|\theta^{k+1} - \theta^k\|^2.$$ Combining with the fact that $\Phi(u^{k+1}, \theta^k) \le \Phi(u^k, \theta^k)$ yields that 
\[
\Phi(u^{k+1},\theta^{k+1}) \le \Phi(u^{k},\theta^{k}) - (\frac{1}{\alpha_{\theta}} - \frac{L_{\theta}}{2})\|\theta^{k+1} - \theta^k\|^2.
\]
As $\Phi$ is assumed to be bounded below on $\hat{C}\times \mathcal{S}$, we have 
\[
\lim_{k\rightarrow \infty} \|\theta^{k+1} - \theta^k\| = 0.
\]
Next, since 
\[
\nabla_\theta \Phi(\theta^{k+1},u^{k+1}) -\nabla_\theta \Phi(\theta^k,u^{k+1}) +  \frac{1}{\alpha_\theta}(\theta^{k+1} - \theta^k) \in \nabla_\theta \Phi(u^{k+1},\theta^{k+1}) + \mathcal{N}_{\mathcal S}(\theta^{k+1}),
\]
we have $$\mathrm{dist}\left(0, \nabla_\theta \Phi(u^{k+1},\theta^{k+1}) + \mathcal{N}_{\mathcal S}(\theta^{k+1}) \right) \rightarrow 0$$ and then the conclusion follows from Theorem \ref{thm_conv_inexact} and Corollary \ref{coro1} immediately.
\end{proof}

\subsubsection{Interpretation for the theoretical validity of ICTM}

While people are not currently aware of the convergence guarantees for ICTM towards solutions with good qualities, in practice it is usually able to reach a local minimizer. 
We shall provide an interpretation for the theoretical validity of ICTM
from the perspective of $u$ update in \eqref{th_u}.
Recall that in the ICTM, an important step is to adopt a simple thresholding to update $u$. The precise scheme is based on finding an approximate solution via solving the minimization of the linearization of $\Phi(u,\theta^{k})$ at $u^k$, 
\[\min_{u\in \hat C} L_{u^k}(u) : = \Phi(u^k,\theta^{k})+ \sum_{i=1}^n\sum_{j=1}^p (u_{i,j}-u_{i,j}^k) {\nabla_{u_{i,j}} \Phi(u^k,\theta^k)}\]
which is further equivalent to
\[\min_{u\in \hat C}  \sum_{i=1}^n\sum_{j=1}^p u_{i,j} {\nabla_{u_{i,j}} \Phi(u^k,\theta^k)}.\]

This can be done for each $j$ independently because of the constraints $\sum_{i=1}^n u_{i,j} = 1$ and $u_{i,j}\geq 0$ as defined in $\hat C$. In this regard, the update of $u_{i,j}$ for a fixed $j$ in the ICTM reads  as
\begin{align}\label{thresholding}
u_{i,j} = 
\begin{cases}
1 & \textrm{if} \ \ i = \min\left\{\arg\min_m {\nabla_{u_{i,j}} \Phi(u^k,\theta^k)}\right\}, \\
0 & \textrm{otherwise.}
\end{cases}
\end{align}

\subsection{Summary of the convergence results}
{To the end of this section,
we summarize our convergence results for the three cases discussed above in following:
\begin{itemize}
	\item [1.] In global minimizer for $\theta$-update and global minimizer for $u$-update case, alternating minimization scheme generates a local minimizer $(\bar{u},\bar{\theta})$ of problem \eqref{prb2} in the sense that there exists $\epsilon > 0$ such that
	\[
	\Phi(\bar{u}, \bar{\theta}) \le \Phi(u, \theta), \quad \forall (u,\theta) \in \hat{C} \times (\mathcal S \cap \mathbb{B}_{\epsilon}(\bar{\theta})).
	\]
	\item [2.] In global minimizer for $\theta$-update and local minimizer for $u$-update case, alternating minimization scheme generates a local minimizer $(\bar{u},\bar{\theta})$ of problem \eqref{prb2} in the sense that there exists $\epsilon > 0$ such that
	\[
	\Phi(\bar{u},\bar{\theta}) \le \Phi(u, \theta), \quad \forall (u,\theta) \in \left({C} \cap \mathbb{B}_r(\bar{u}) \right)\times (\mathcal S \cap \mathbb{B}_{\epsilon}(\bar{\theta})).
	\]
	\item [3.] In approximate stationary solution for $\theta$-update and thresholding for $u$-update case, alternating minimization scheme generates a local minimizer $(\bar{u},\bar{\theta})$ of problem \eqref{prb2} in the sense that there exists $\epsilon > 0$ such that
	\[
	\Phi(\bar{u}, \bar{\theta}) \le \Phi(u, \theta), \quad \forall (u,\theta) \in \left(  \hat{C} \cap \mathbb{B}_{\epsilon}(\bar{u})  \right) \times \left( \mathcal{S} \cap \mathbb{B}_{\epsilon}(\bar{\theta}) \right) .
	\]
\end{itemize}
}

\section{Numerical experiments} \label{sec:exp}

In this section, we illustrate the performance of the algorithms for different examples via several numerical examples. We implemented the algorithms in MATLAB. All reported results were obtained on a laptop with a 3.8GHz Intel Core i7 processor and 8GB of RAM. We apply our methods to different models and numerical results show a clear advantage of the method in terms of simplicity and efficiency. 

\subsection{Application to the implicit surface reconstruction problem}\label{sec:surface} 

In the first example, we consider the following implicit surface reconstruction model from a point cloud,

\begin{align}
\min_{u(x)\in\{0,1\}} \mathcal E_{ISC}(u) \colon =\sqrt{\frac{\pi}{\tau}}\int_{\mathbb R^n}  |d|^{\frac{p}{2}} u \ G_\tau * \left(|d|^{\frac{p}{2}} (1-u)\right) \ dx.\label{eq:approx1}
\end{align}

In this model, $d(x)$ is the distance function from arbitrary points in the space to the point cloud, $u(x)$ is the decision variable which is an indicator function to implicitly determine the inside and outside of the reconstructed surface. Because the point cloud is fixed for the reconstruction, $d(x)$ is then fixed and the only decision variable in this case is $u$. 

After discretization, the problem reads, 
\begin{align} 
\min_{u \in \{0,1\}^p}  \Psi(u) =  [\tilde d \circ (1-u)] K_\tau (\tilde d  \circ u)^T \label{eq:isc}
\end{align}
where $u =(u_1,u_2, \cdots, u_p)$ is the indicator function at $p$ discrete points in the computational domain, $K_\tau$ is a symmetric positive definite matrix comes from the discretization and rearrangement of scaled Gaussian kernel, $\circ$ represents the Hadamard product, and $\tilde d$ is the distance from discrete points in the computational domain to the point cloud (see Figure~\ref{fig:pointd}). We refer more details on the model and related work to \cite{Wang_2021}.

\begin{figure}[ht!]
\centering
\includegraphics[width = 0.5\textwidth, clip,trim = 6cm 6cm 6cm 5cm]{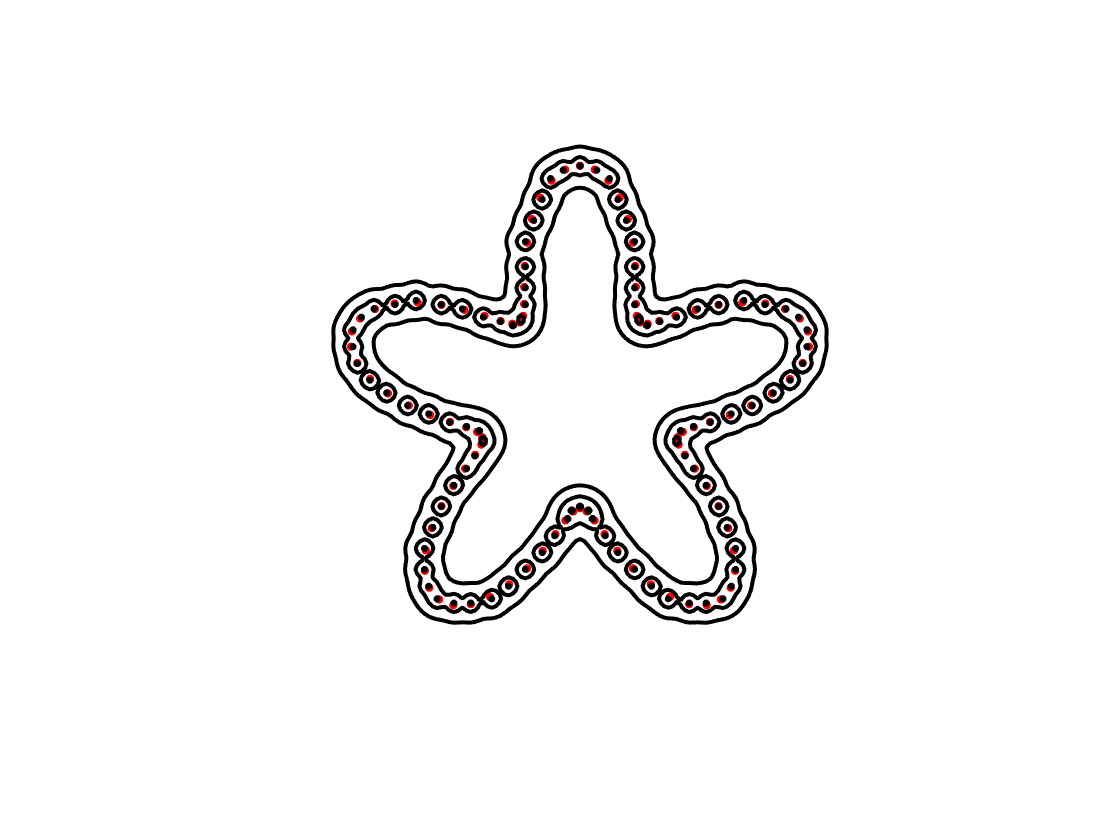}
\caption{A diagram to several level sets of the distance function $\tilde d$ to a discrete point cloud. See Section~\ref{sec:surface}.}\label{fig:pointd}
\end{figure}

It is clear that the problem \eqref{eq:isc} is concave with respect to $u$, we simple apply the thresholding step as that in \eqref{th_u}  to general 2- and 3-dimensional point clouds as displayed in Figure~\ref{fig:isc3}:

\begin{align} \label{eq:iscthresholding}
u_i^{k+1} =  
\begin{cases}
1 & \textrm{if} \ \ (K_\tau (\tilde d  \circ (1-2u))^T)_i <0\\
0 & \textrm{otherwise}
\end{cases}.
\end{align}

All converge in fewer than 100 iteration steps to the minimizer, which implies the efficiency of the algorithm and is consistent with our analysis.

\begin{figure}[ht!]
\centering
\includegraphics[width = 0.24\textwidth, clip,trim = 6cm 3cm 5cm 3cm]{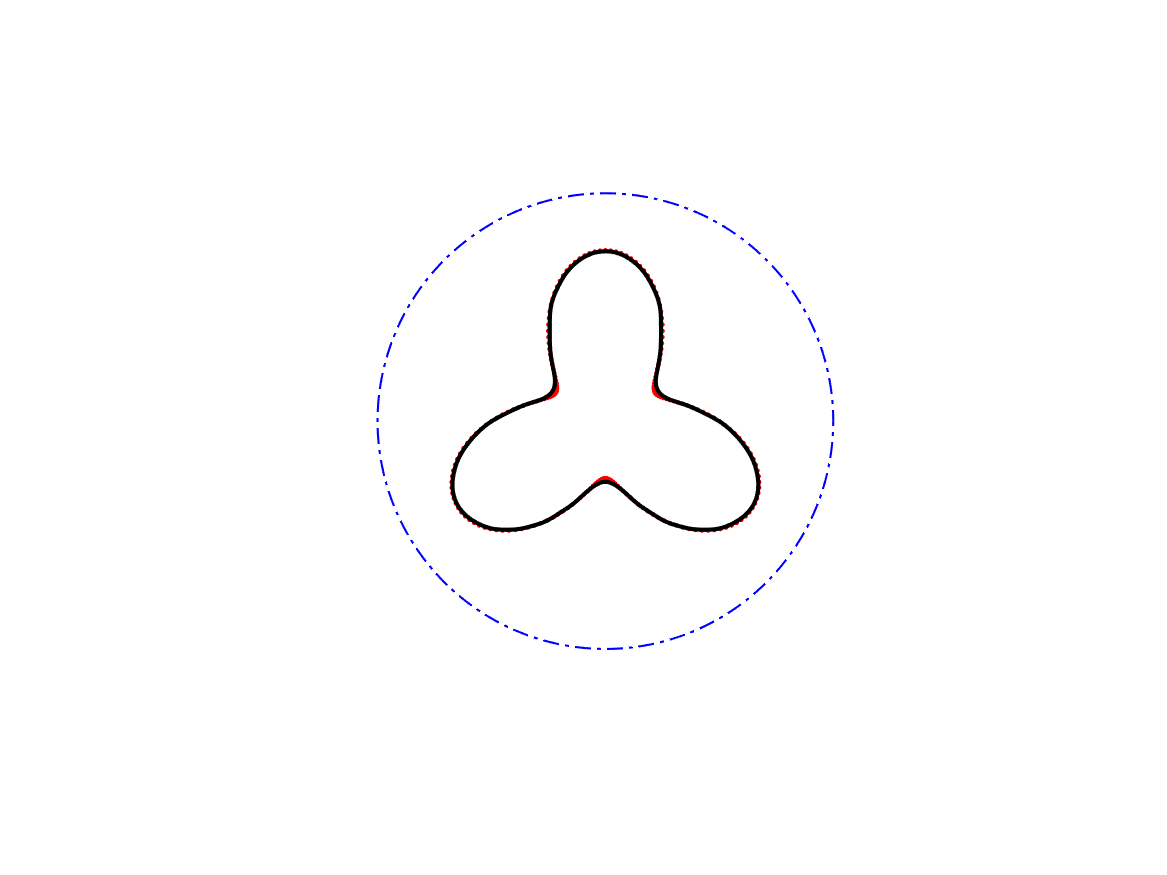}
\includegraphics[width = 0.24\textwidth, clip,trim = 6cm 3cm 5cm 3cm]{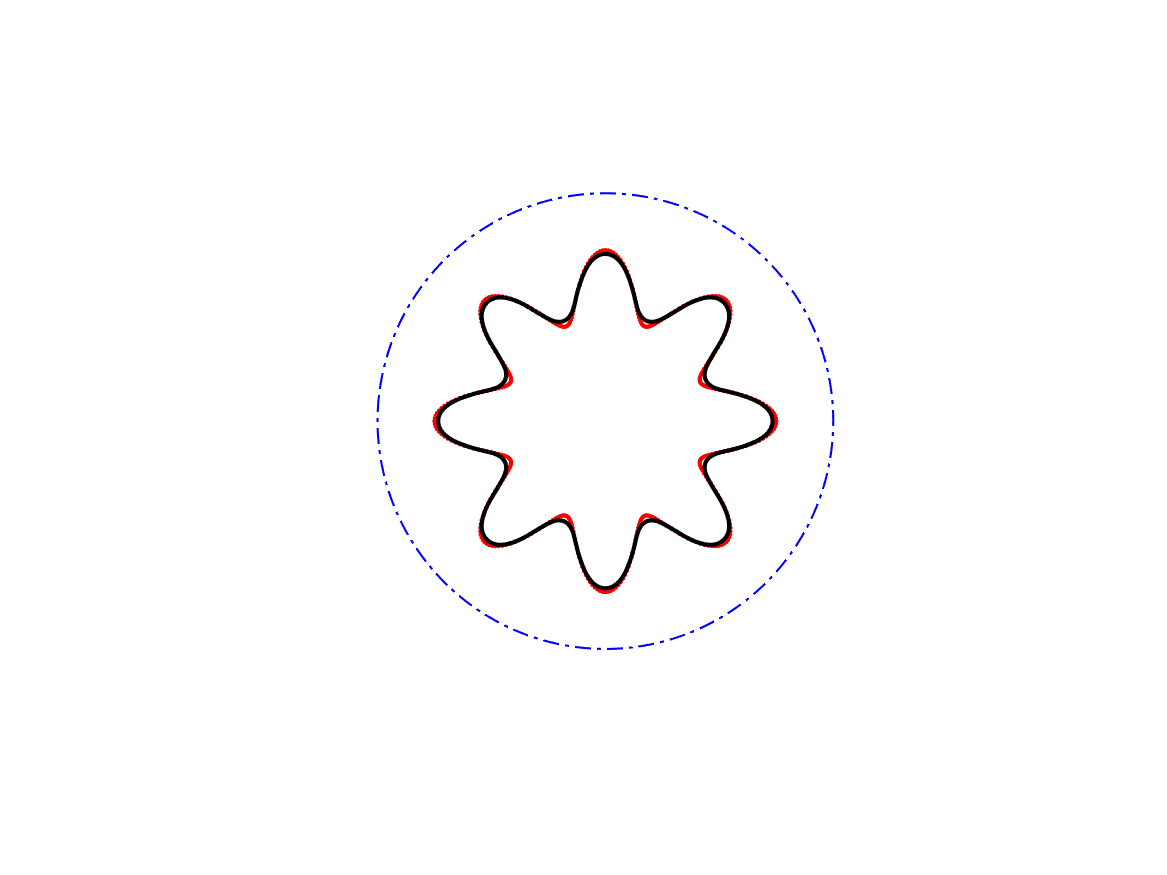}
\includegraphics[width = 0.24\textwidth, clip,trim = 6cm 3cm 5cm 3cm]{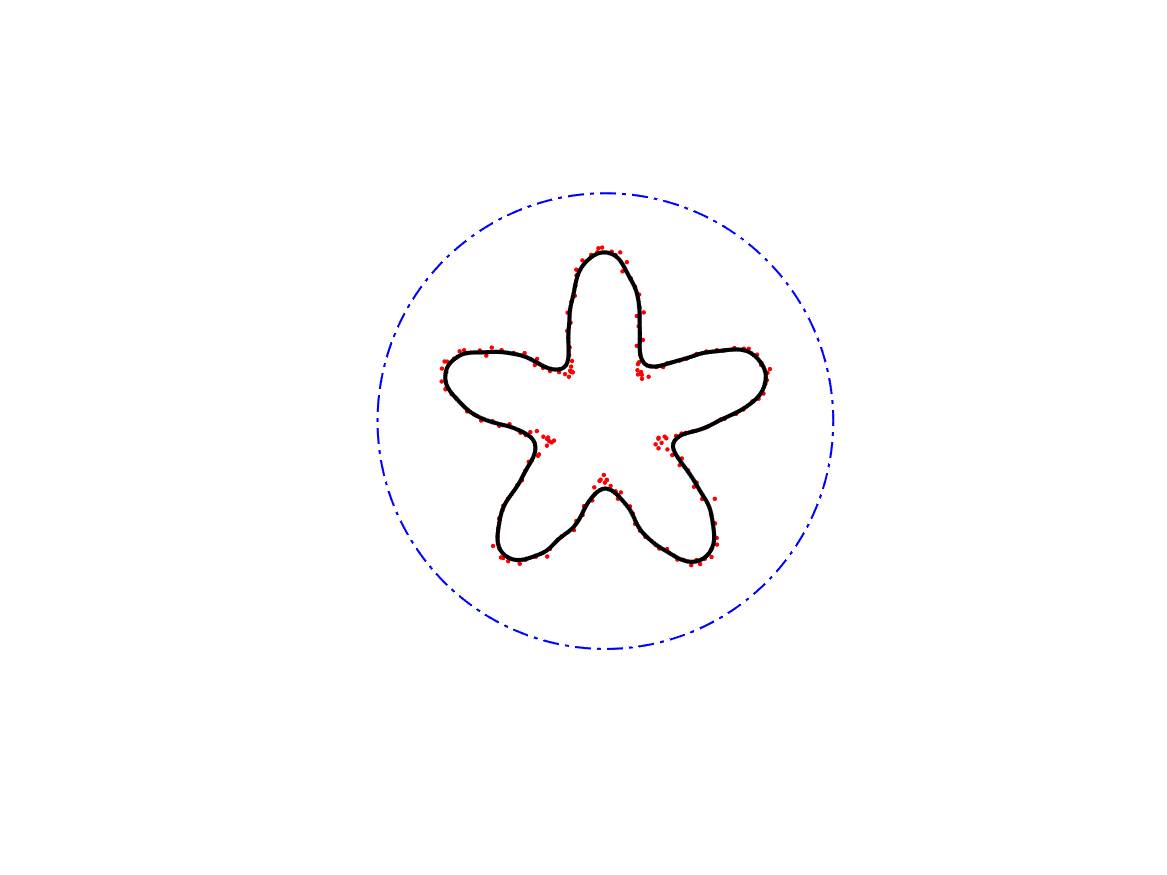}
\includegraphics[width = 0.24\textwidth, clip,trim = 6cm 3cm 5cm 3cm]{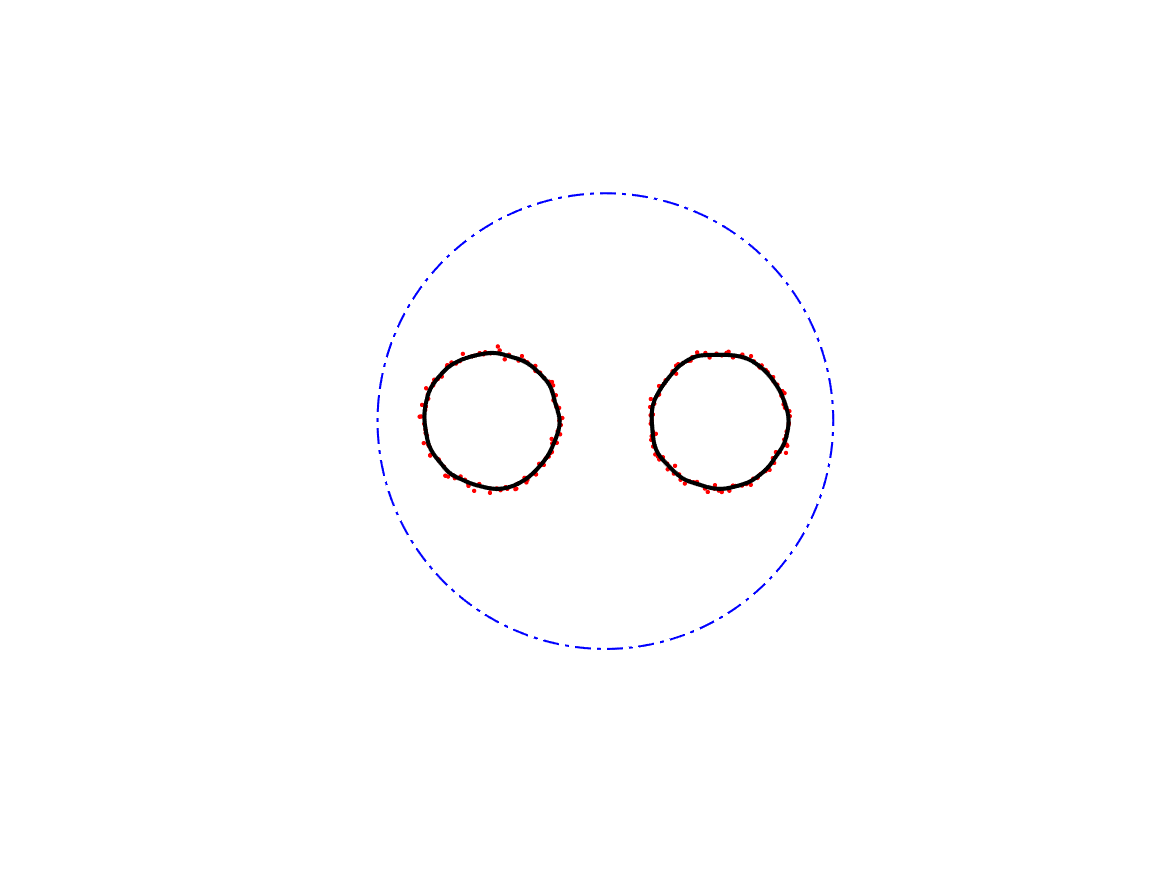}\\
\includegraphics[width = 0.24\textwidth, clip,trim = 4cm 2cm 4cm 2cm]{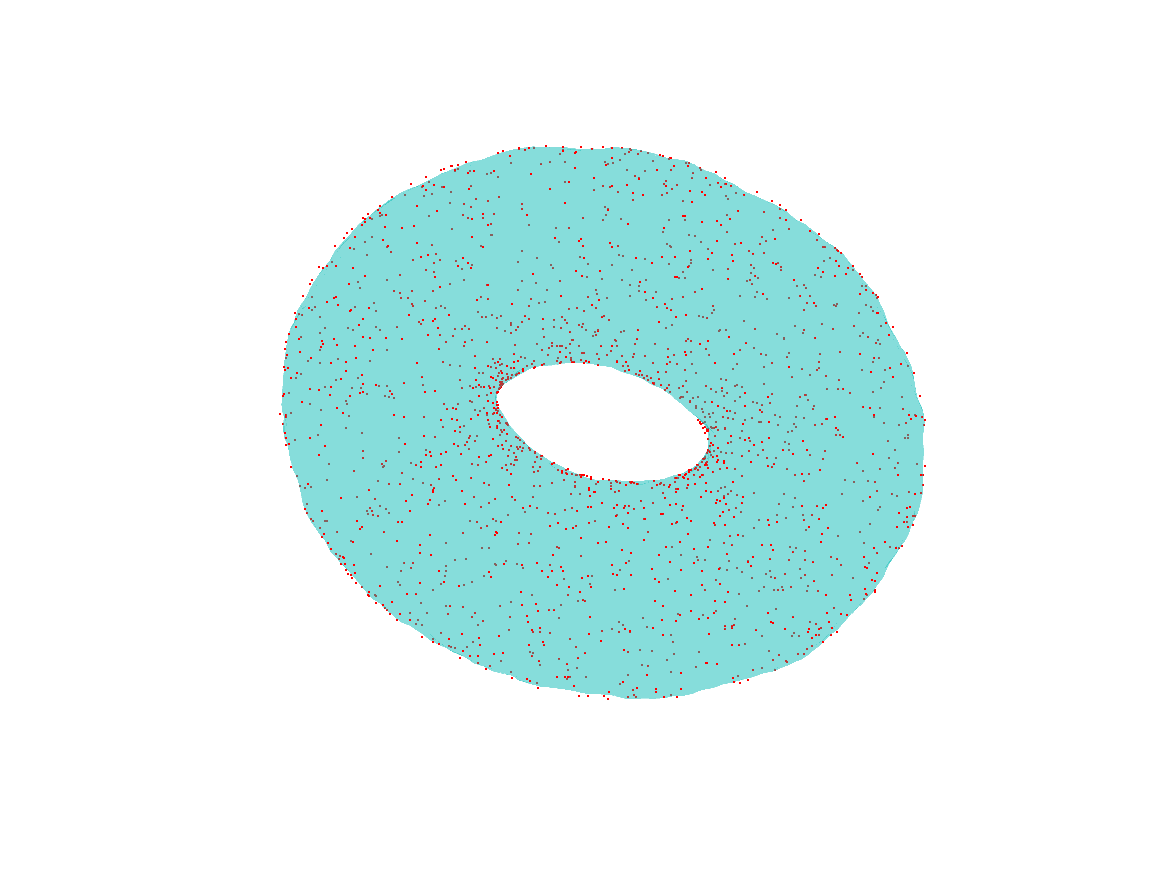}
\includegraphics[width = 0.24\textwidth, clip,trim = 6cm 4cm 6cm 2cm]{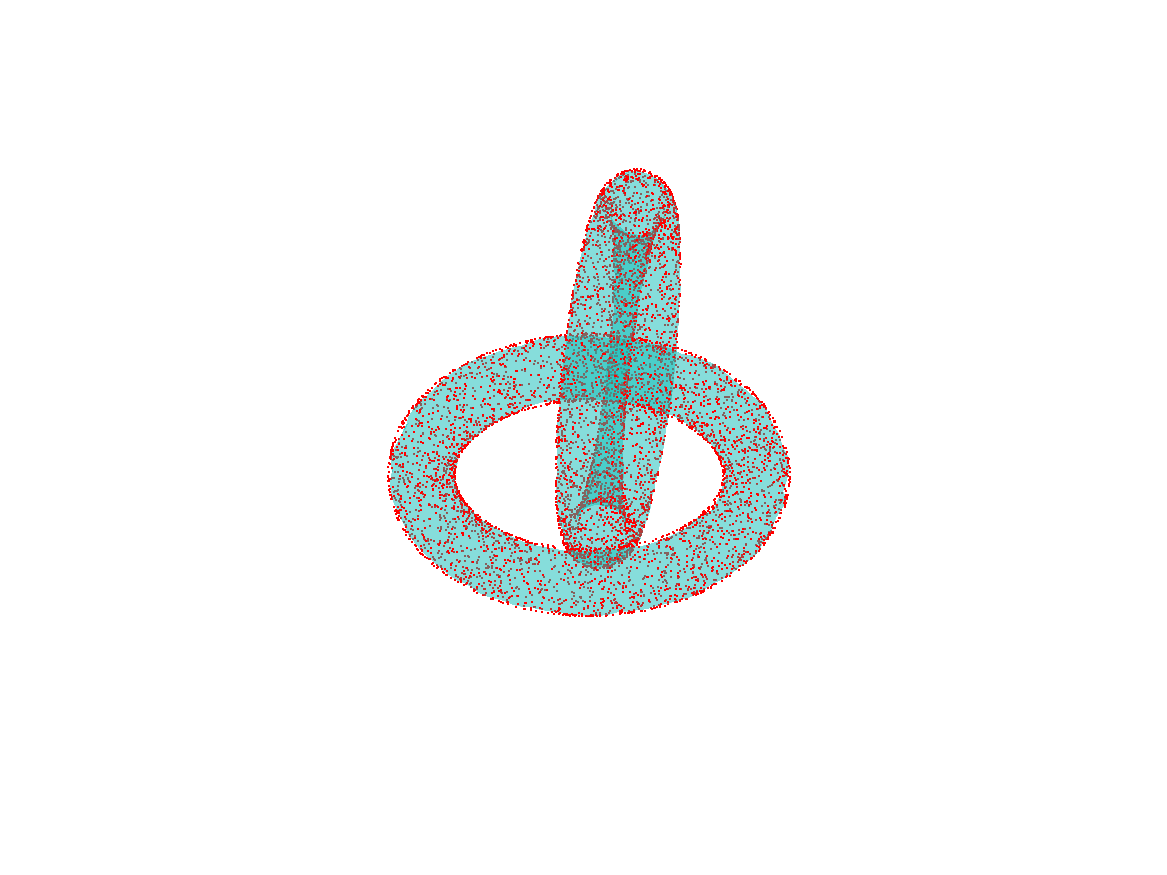}
\includegraphics[width = 0.24\textwidth, clip,trim = 5cm 2cm 4cm 2cm]{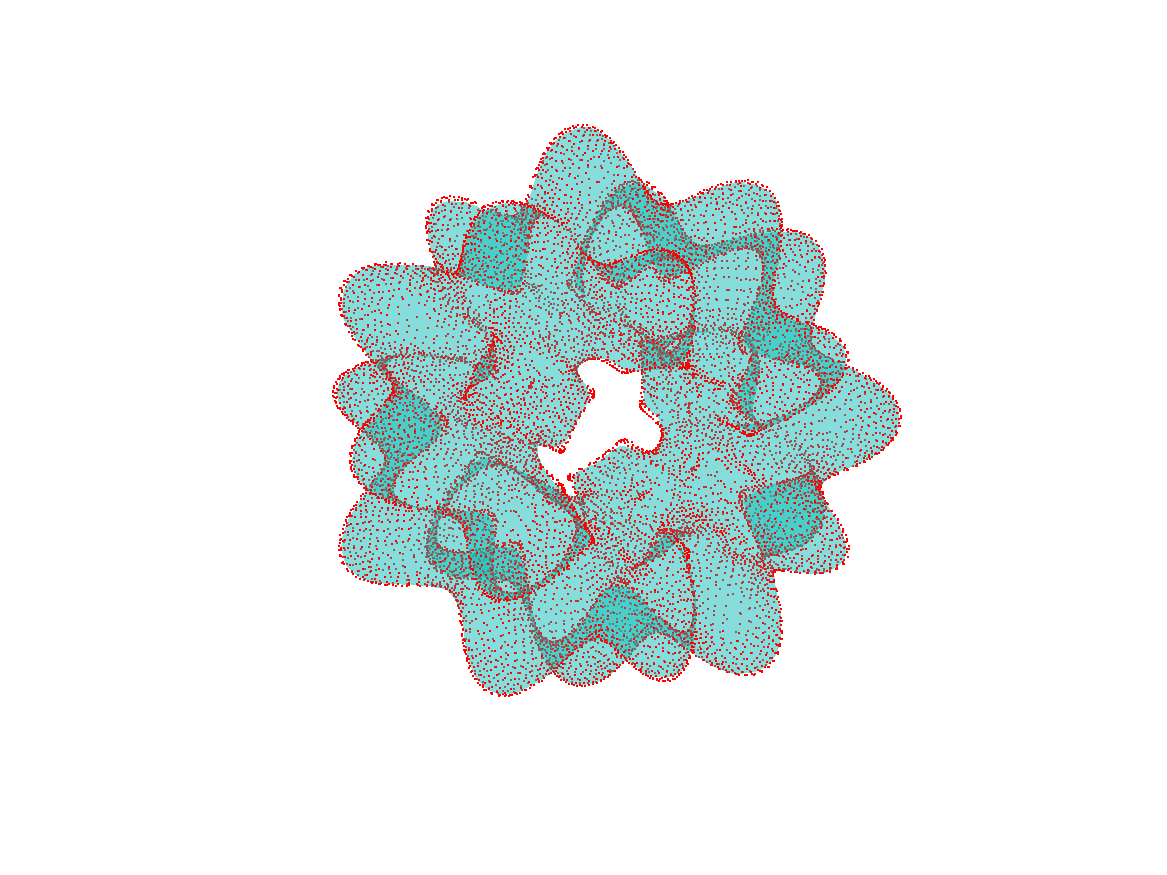}
\includegraphics[width = 0.24\textwidth, clip,trim = 5cm 2cm 4cm 2cm]{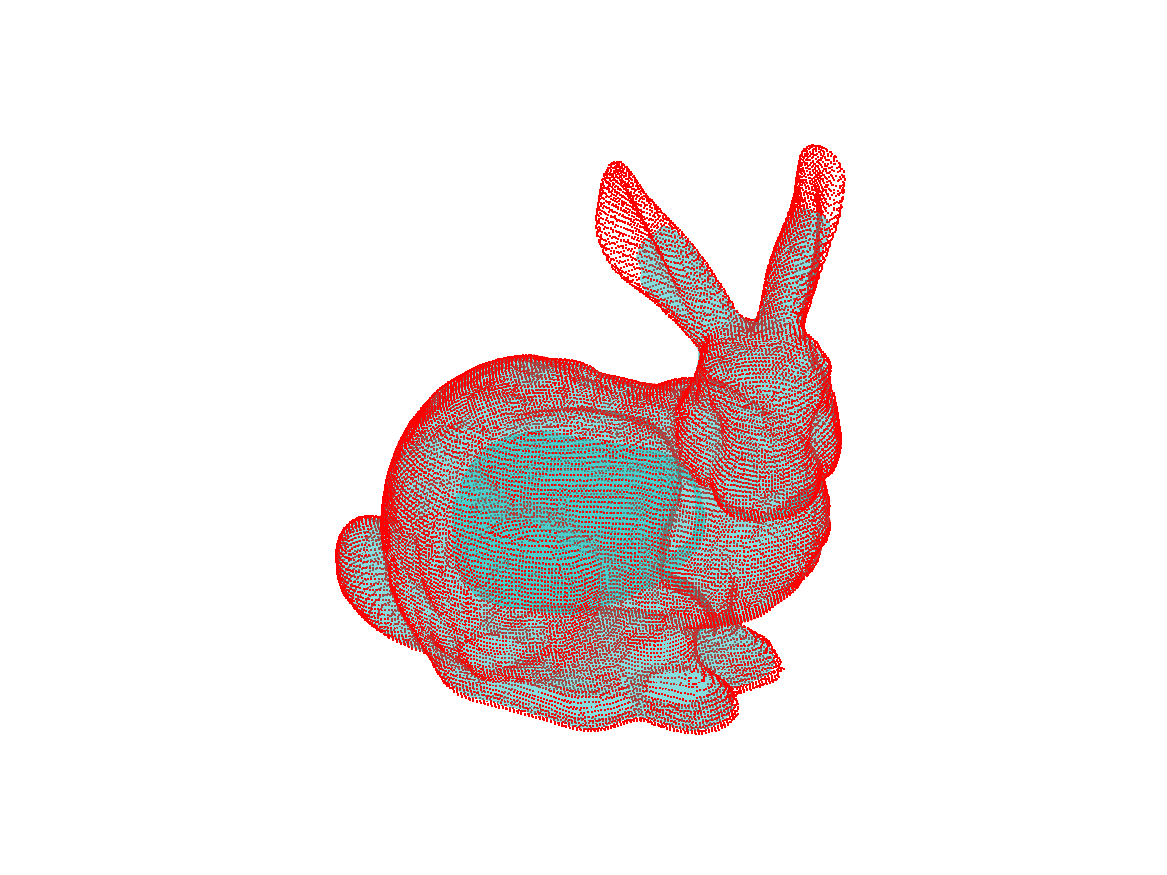}
\caption{Results obtained from \eqref{eq:iscthresholding} for different point clouds. See Section~\ref{sec:surface}.}\label{fig:isc3} 
\end{figure}

{Furthermore, we use several simple examples to compare the efficiency between the method and level set approaches. A recent work in \cite{he2019fast} has carefully studied the efficiency comparison between the semi-implicit method (SIM) and the classic level set approach in \cite{Zhao_2000} and showed a big improvement in the efficiency. Therefore, we simply compare the efficiency between our method and SIM in \cite{he2019fast}. 

We consider different point clouds (different $m$) generated using $N=200$ uniform points $\theta_i$ in $[0,2\pi]$:
\[\begin{cases}
x_i = r_i \cos(\theta_i), \\
y_i = r_i \sin(\theta_i),
\end{cases}
\]
where $r_i = 1+0.4\sin(m\theta_i)$ with different $m = 3, 4, 5, 6, 7$, and $8$. We use the same initial guess as shown in Figure~\ref{fig:com} and same discretization ($128 \times 128$ grids) of the computational domain for two methods. The table at the bottom of  Figure~\ref{fig:com} shows the dramatical acceleration in the computational CPU time and figures indicate the improvement on the accuracy.  

\begin{figure}[ht!]
\centering
\includegraphics[width = 0.3\textwidth,clip, trim = 4cm 2cm 4cm 1.5cm]{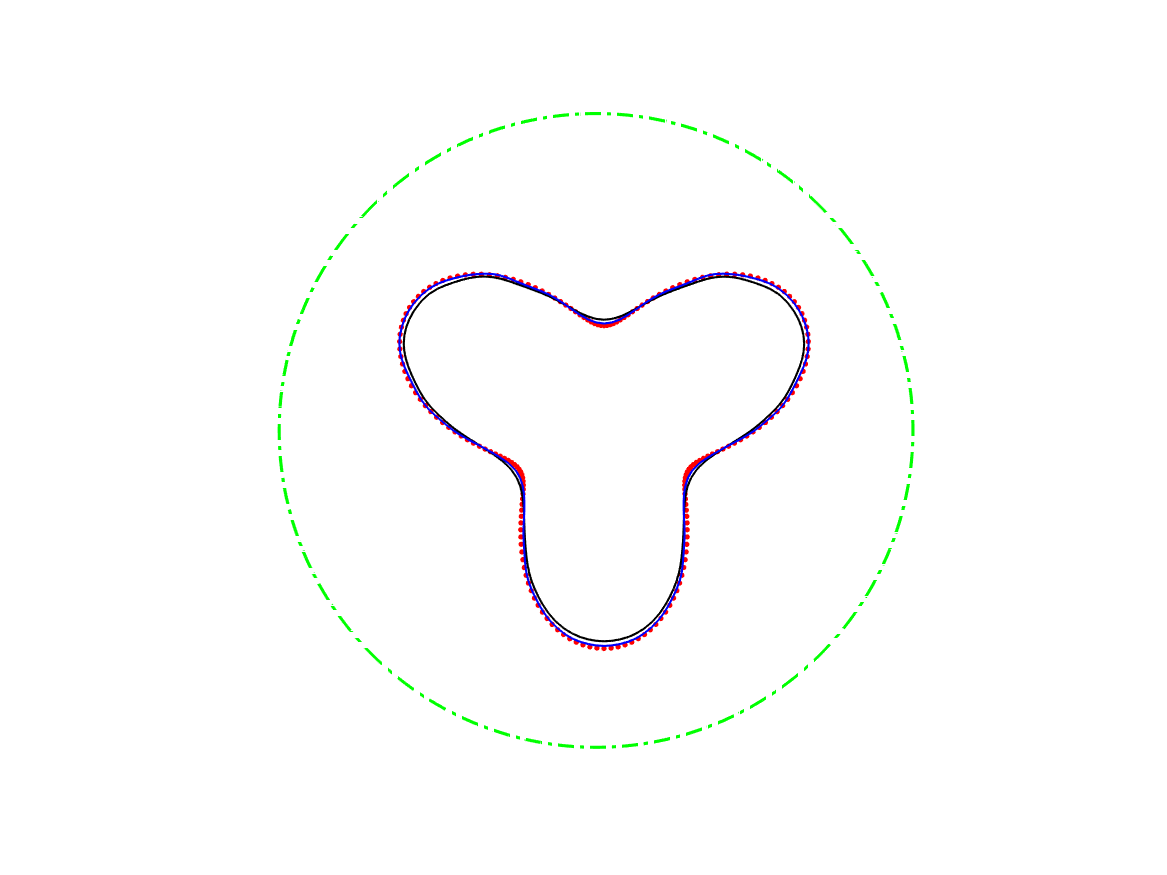}
\includegraphics[width = 0.3\textwidth,clip, trim = 4cm 2cm 4cm 1.5cm]{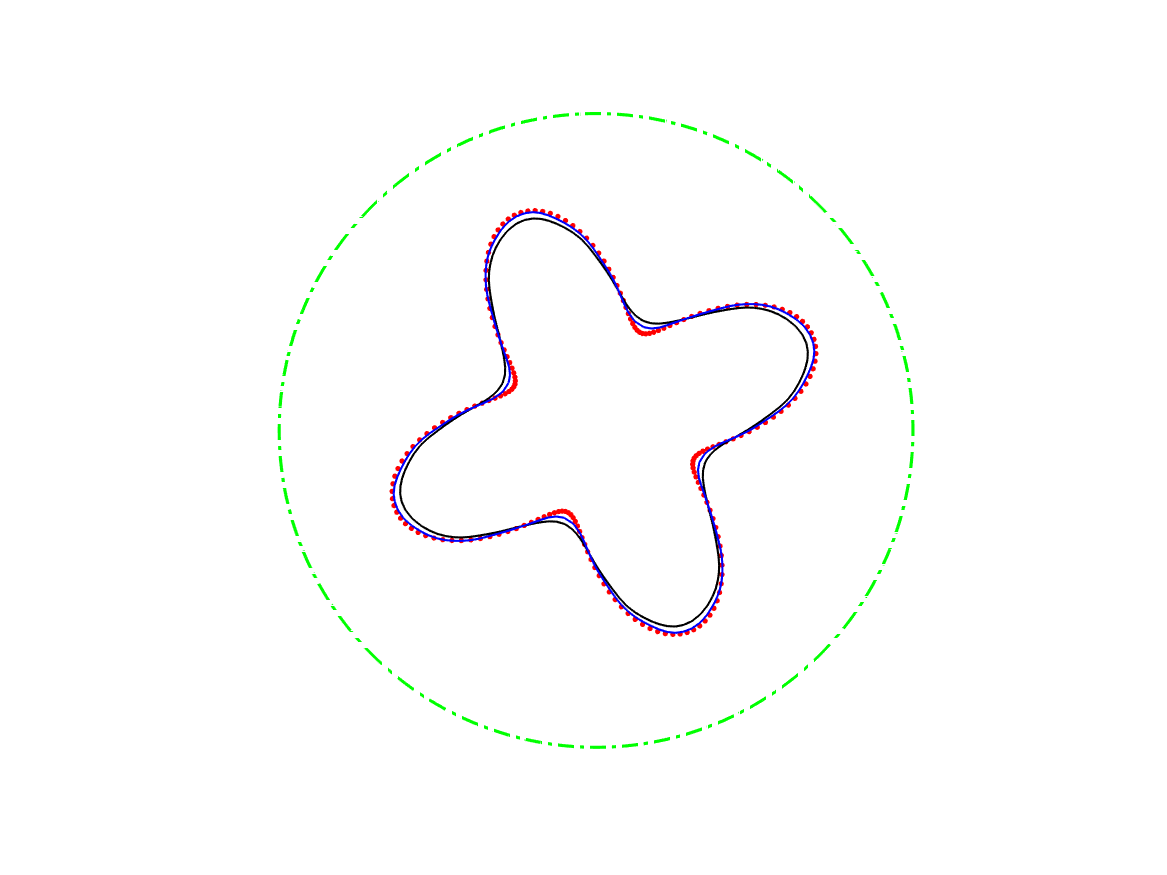}
\includegraphics[width = 0.3\textwidth,clip, trim = 4cm 2cm 4cm 1.5cm]{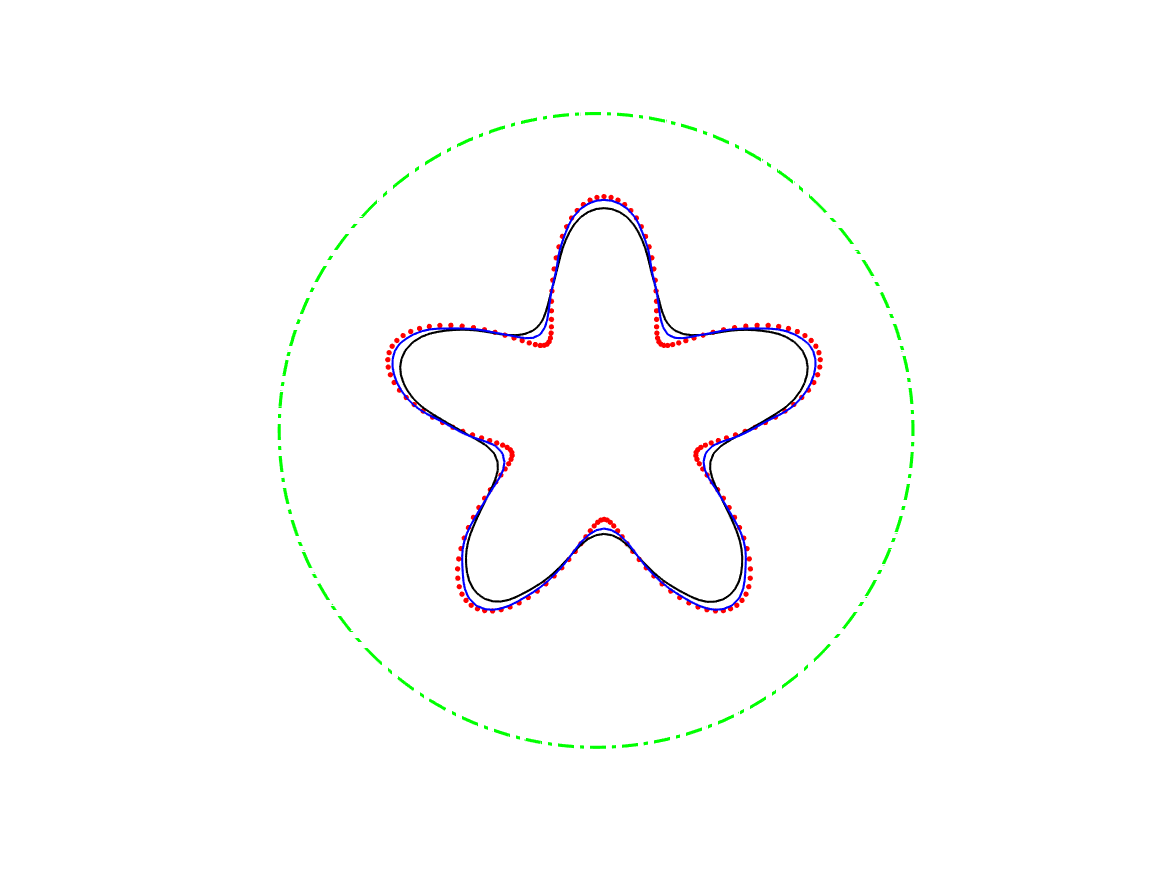}
\includegraphics[width = 0.3\textwidth,clip, trim = 4cm 2cm 4cm 1.5cm]{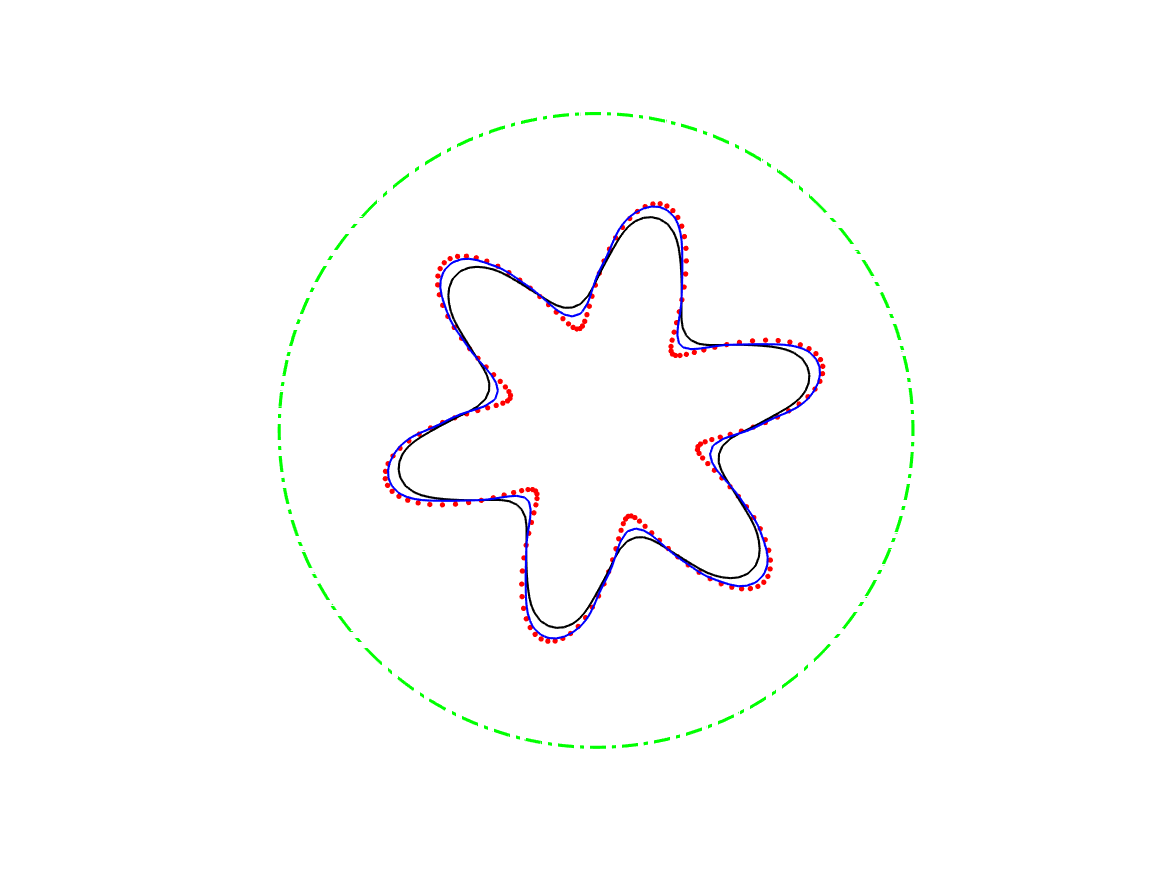}
\includegraphics[width = 0.3\textwidth,clip, trim = 4cm 2cm 4cm 1.5cm]{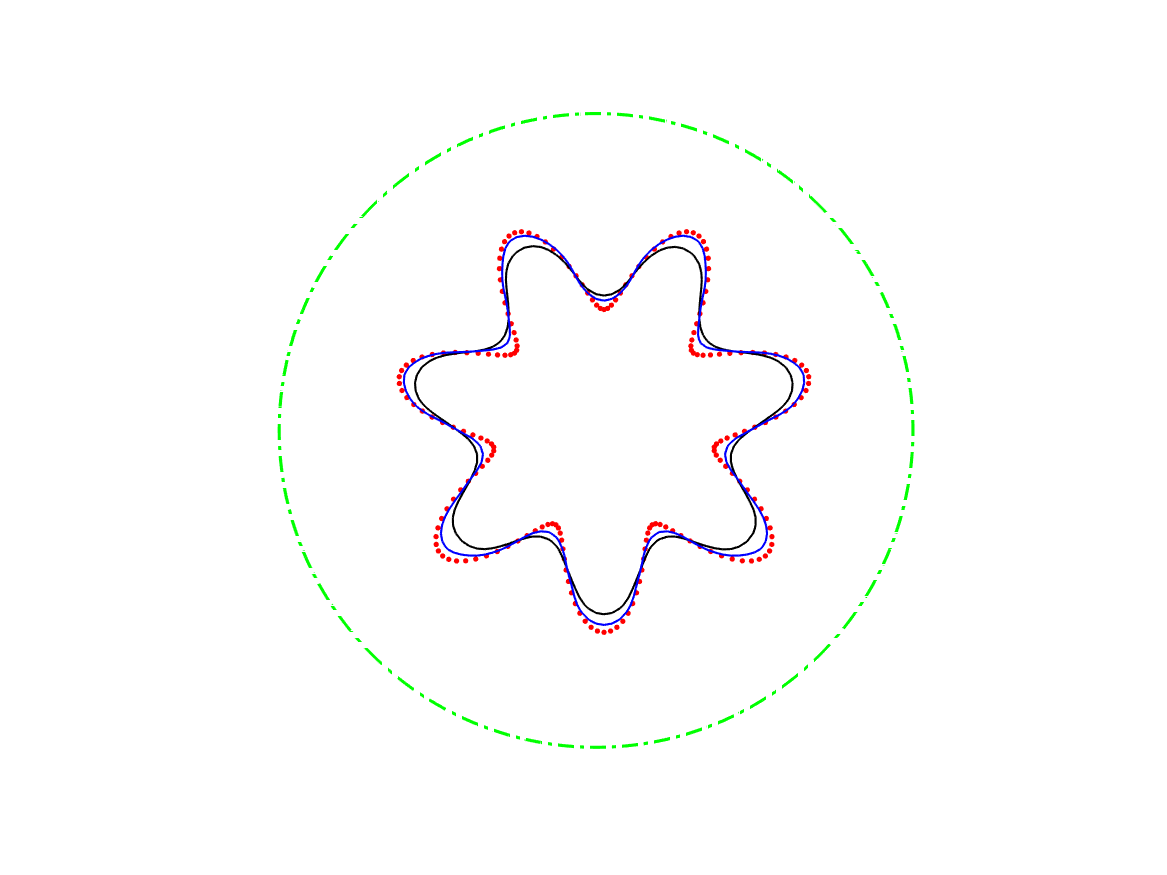}
\includegraphics[width = 0.3\textwidth,clip, trim = 4cm 2cm 4cm 1.5cm]{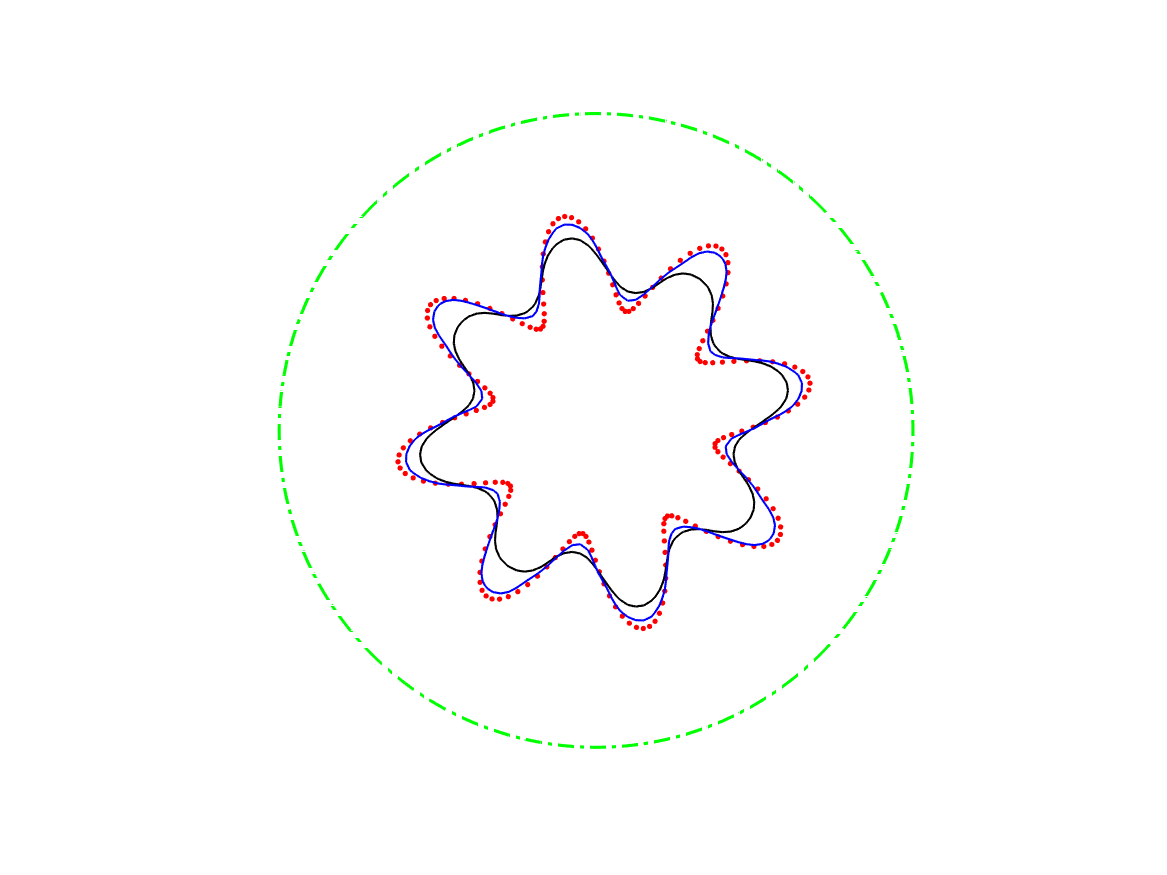}

\medskip
{\begin{tabular}{c|c|c|c|c|c|c}
SIM in \cite{he2019fast} & $3.2$ s &  $4.47$ s & $5.26$ s & $6.40$ s & $7.18$ s & $4.56$ s  \\
\hline
\hline
Our method  & $0.029$ s & $0.036$ s & $0.029$ s & $0.026$ s & $0.031$ s & $0.030$ s
\end{tabular}
\caption{Comparisons between our method and SIM in \cite{he2019fast}. Blue curve: results obtained from our method. Black curve: results obtained from SIM in \cite{he2019fast}. Green curve: initial guess.  Red points: point cloud with $200$ points. Table: CPU times for the computations. See Section~\ref{sec:surface}.} \label{fig:com}
}
\end{figure}
}

\subsection{Application to the Chan--Vese model for image segmentation}\label{sec:cv}

In this experiment, we consider the well-known Chan--Vese (CV) model \cite{Chan_2001} for image segmentation approximated using characteristic functions. Specifically, in the CV model, the objective functional for the n-segment case is 
\begin{align}
\mathcal{E}_{CV}(\Omega_1, \ldots, \Omega_n, \theta_1, \ldots, \theta_n) =\lambda  \sum_{i=1}^n |\partial \Omega_i|+\sum_{i=1}^n \int_{\Omega_i}|\theta_i-I|^2 \ dx \label{CV} 
\end{align}
where $\partial \Omega_i$ is the boundary of the $i$-th segment $\Omega_i$, $|\partial \Omega_i|$ denotes the perimeter of the domain $\Omega_i$, $I: \Omega \rightarrow [0,1]^d$ is the image, $d$ is the channel of the image ({\it e.g.}, $d=1$: gray image, $d=3$: RGB image), and $\lambda$ is a positive parameter.

Using the approximation to $|\partial \Omega_i|$ by using characteristic functions, $u_i(x)$, of each $\Omega_i$, the approximate objective functional is written into 

\begin{align}
\mathcal{E}_{CV}^\tau = \lambda \sqrt{\frac{\pi}{\tau}} \sum\limits_{i=1}^n \int_{\Omega} u_i G_{\tau} *(1-u_i) \  dx + \sum_{i=1}^n \int_{\Omega} u_i |\theta_i-I|^2 \ dx. \label{approxCV}
\end{align}

After discretization using the information on each pixel, the discretized problem is then written into 

\begin{align}
\min_{u \in C, \theta \in \mathbb R^n}  \Psi(u,\theta) = \sum_{i=1}^n \sum_{j = 1}^p u_{i,j} |\theta_i-I_j|^2 + \lambda \sum_{i=1}^n  (1-u_i) K_\tau u_{i}^T
\end{align}
where $u_i =(u_{i,1},u_{i,2}, \cdots, u_{i,p})$ here is the $i$-th row in $u$, $K_\tau$ is a symmetric positive definite matrix comes from the discretization and rearrangement of scaled Gaussian kernel, $p$ here is the number of pixels of the image, and $\theta_i$ $(i = 1,2,\cdots, n)$ is the state variable for each segment. (More details can be referred to \cite{wang2016efficient}.)

Because $\Psi(u,\theta)$ is strictly convex with respect to $\theta_i$, subproblem \eqref{subproblem1} can be explicitly given by solving 
\[\dfrac{\partial \Psi(u^k,\theta)}{\partial \theta_i} = 0\]
which is 
\[ \theta_i^{k+1} = \dfrac{ \sum_{j = 1}^p u^k_{i,j} I_{j}}{ \sum_{j = 1}^p u^k_{i,j}}.\]

We then apply Algorithm~\ref{alg3} to the above minimization problem and the subproblem \eqref{subproblem2} for $u_{i,j}$ for each $j = 1, 2, \cdots, p$ is iteratively solved by \eqref{thresholding};
\begin{align}
u^{k+1}_{i,j} = \begin{cases}
1 & \textrm{if} \ \ i = \min\left\{\arg\min_m  |\theta_m^k-I_{m,j}|^2+\lambda \left(K_\tau{(1-2u_m^k)}^T\right)_{j} \right\}, \\
0 & \textrm{otherwise.}
\end{cases}
\end{align}

In the practical implementation, the matrix multiplication $K_\tau {(1-2u_m^k)}^T$ is computed effectively by fast Fourier transform (FFT). In the follows, we apply the above algorithm into the segmentation on both gray images and colorful images. In Figure~\ref{fig1}, we list the initial guesses ($u^0$) for different images with different number of segments in the left, the final converged solution in the middle, and the decaying curve of the objective function value with respect to the iteration steps. $\lambda$ are set to be $0.03$ for the gray image and $0.1$ for the RGB image.

\begin{figure}[t!]
\centering
\includegraphics[width = 0.3\textwidth]{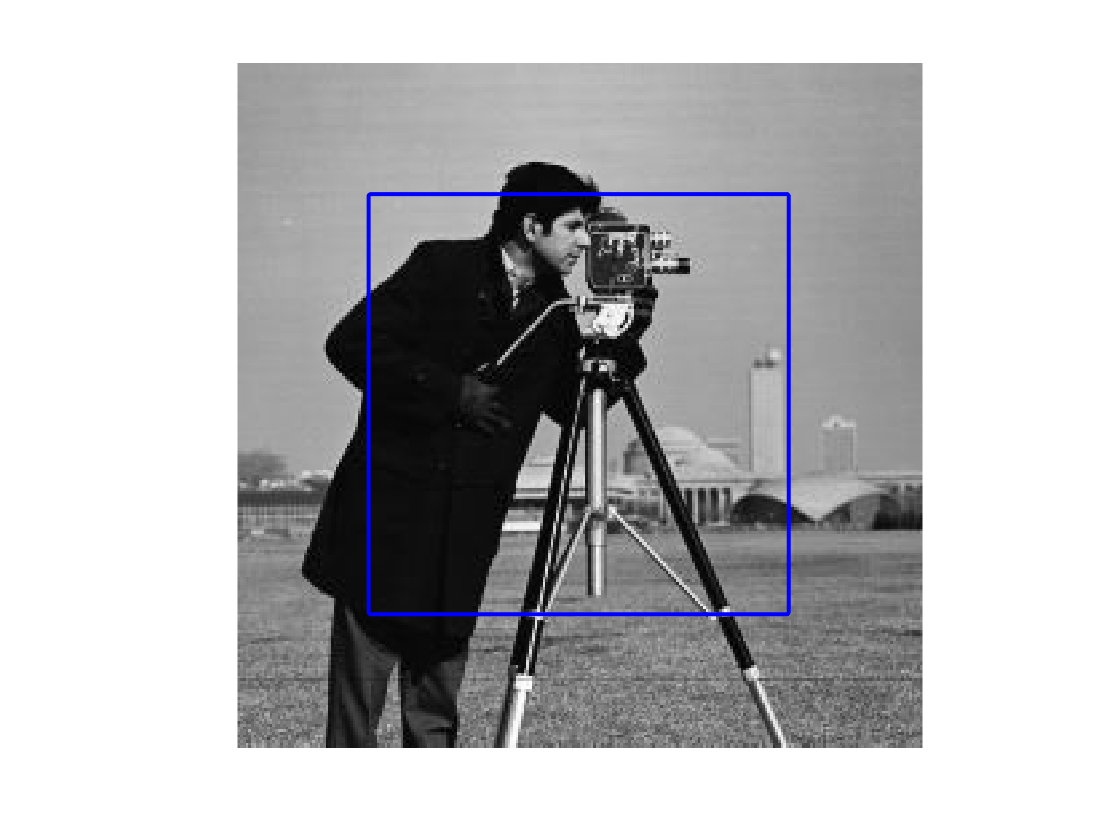}
\includegraphics[width = 0.3\textwidth]{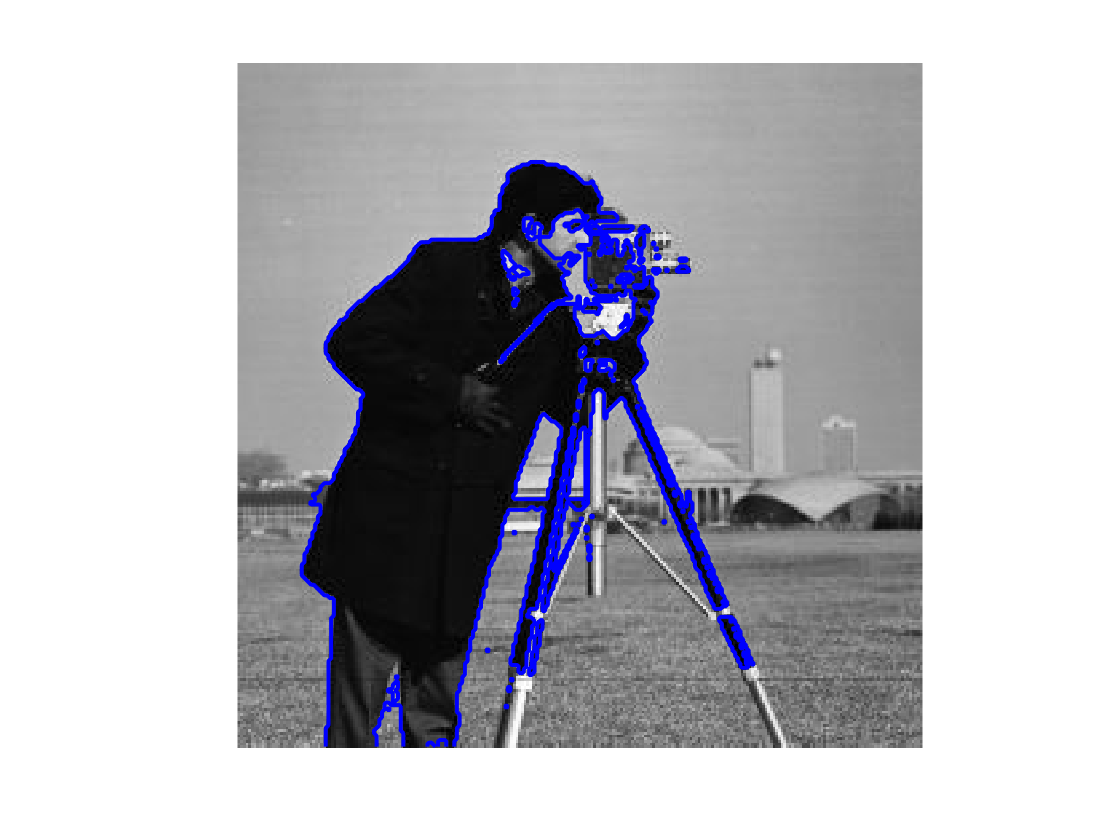}
\includegraphics[width = 0.3\textwidth]{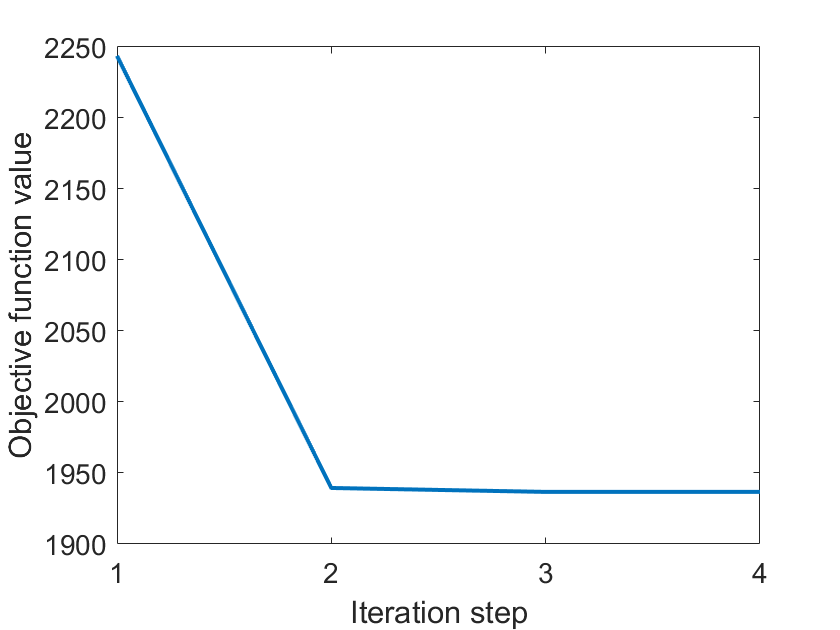}\\ \ 
\includegraphics[width = 0.3\textwidth]{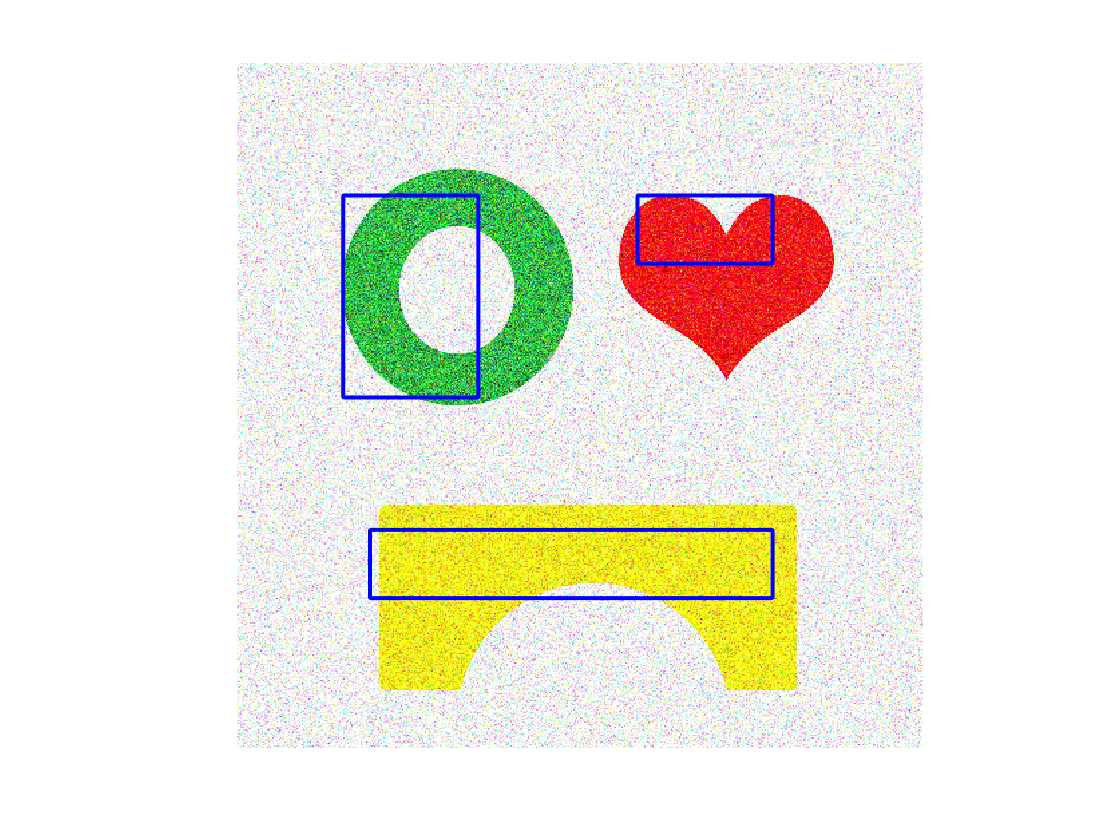}
\includegraphics[width = 0.3\textwidth]{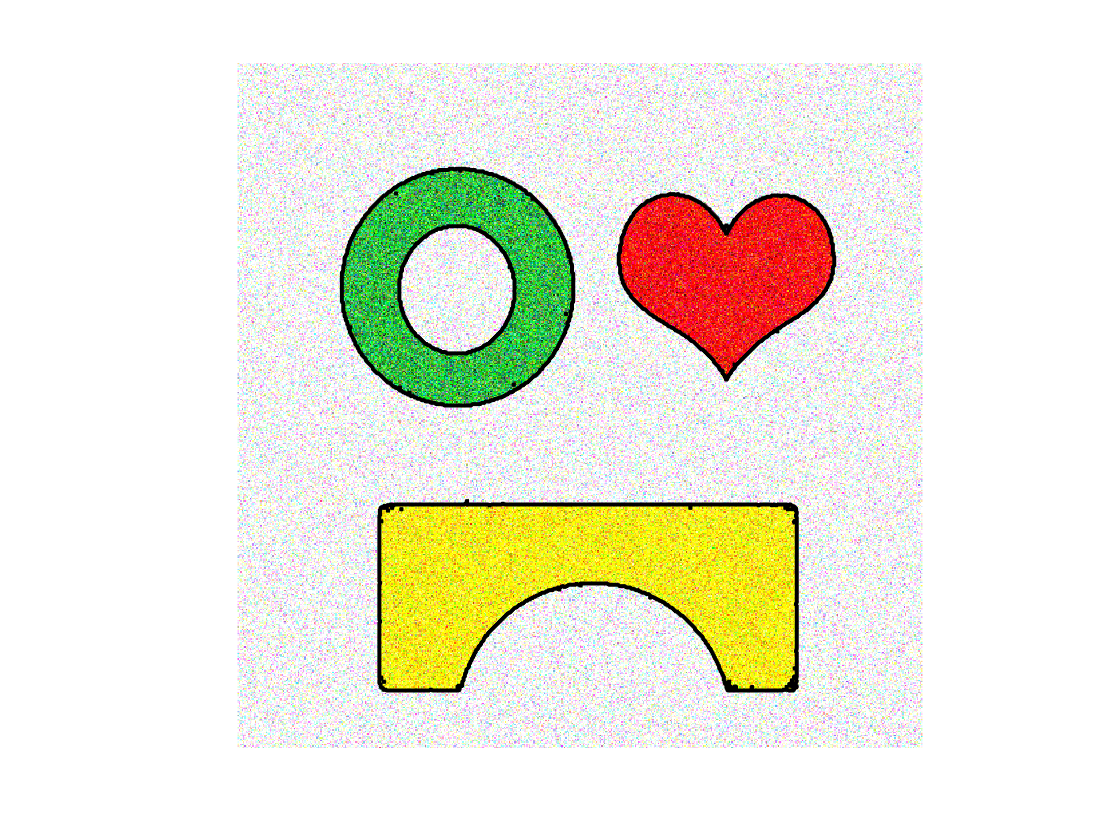}
\includegraphics[width = 0.3\textwidth]{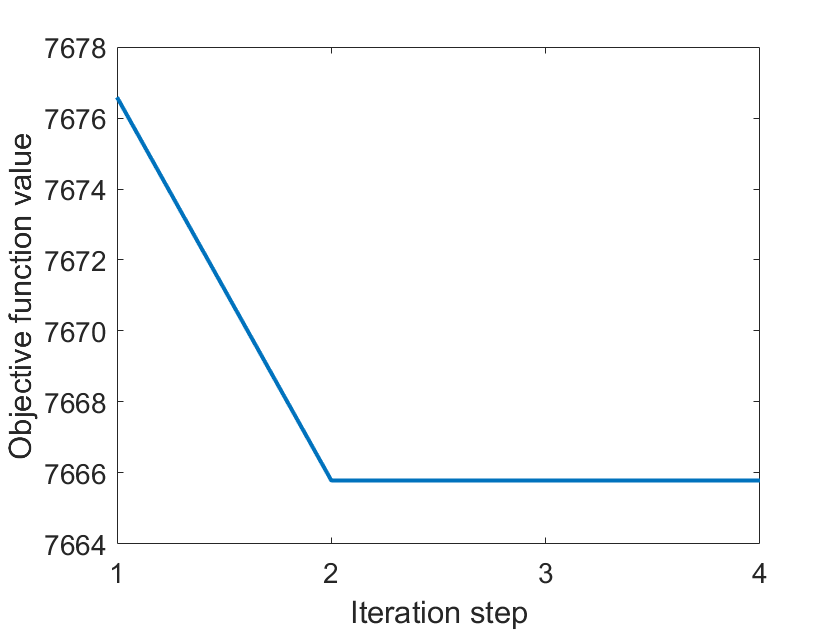}
\caption{Left: the initial guesses for the iteration. Middle: the converged results. Right: curves for the objective function values with respect to iteration step. In two images, $\lambda$ are set to be $0.03$ and $0.1$ respectively.  See Section~\ref{sec:cv}.}\label{fig1}
\end{figure}

The initial guesses are the characteristic functions of the squares bounded by blue lines. For example, in the gray image, the initial guess $u^0 = (u_1^0;u_2^0)$ is given as below,
\[u_{1,j}^0 = \begin{cases}
1 &\ \  \textrm{if $j$-th pixel is inside the region bounded by the blue curve}, \\
0 & \ \ \textrm{Otherwise},
\end{cases}
\]
and $u_{2,j} = 1-u_{1,j}$.

One can observe that the algorithm converges to an optimal solution in just 4 steps, implying the efficiency of the algorithm.

\subsection{Application to the Local Intensity Fitting (LIF) model for image segmentation}\label{sec:lif}

In this section,  we apply Algorithm~\ref{alg3} to the LIF model \cite{Chunming_Li_2008} for the two-phase case with the objective functional
\begin{align}
&\mathcal{E}_{LIF}(\Omega_1,\Omega_2, \theta_1,\theta_2)=\lambda  \sum_{i=1}^2 |\partial \Omega_i|  + \mu \sum_{i=1}^2 \int_{\Omega}\int_{\Omega_i}G_{\sigma}(x-y)|\theta_i(x)-I(y)|^2 \ dydx  \label{LBF}. 
\end{align}
where $\mu_i$ are given constants. 

Using the characteristic representation, approximation similar to that in \eqref{approxCV}, and discretization on pixels, one arrives at the following problem
\begin{align}
\min_{u \in C, \theta_i \in \mathbb R^p}  \Psi(u,\theta) =   \sum_{i=1}^2 \lambda (1-u_i) K_\tau u_{i}^T+ \mu \left(u_{i} K_\sigma {(\theta_{i}^2+I^2)}^T  - 2(u_{i}\circ I)K_\sigma \theta_{i}^T\right) \label{minLBF}
\end{align}
where $\theta_i\in \mathbb R^p$ is a vector for $\theta_i(x)$ on discrete pixels,   $K_\tau$ and $K_\sigma$ are two $p\times p$ matrices whose $(i,j)$-th entries are $G_\tau(x_i-x_j)$ and $G_\sigma(x_i-x_j)$, $\theta_i^2$ and $I^2$ are square of values at each entry, and $\circ$ denotes the Hadamard product.

In this case, using the fact that $K_\sigma$ is a symmetric positive definite matrix and  $$(u_{i}\circ I)K_\sigma \theta_{i}^T = \theta_{i} K_\sigma(u_{i}\circ I)^T \ \ {\rm and}   \ \ u_{i} K_\sigma {(\theta_{i}^2)}^T  = \theta_{i}^2 K_\sigma u_{i}^T .$$
subproblem \eqref{subproblem1} can also be explicitly solved by 
\[\theta_{i,j} = \frac{(K_\sigma(u_{i}\circ  I)^T)_j}{(K_\sigma u_{i}^T)_j}.\]
Here,  the subscript $j$ denotes the $j$-th entry in matrix vector multiplication.
To make the calculation of $\theta_{i,j}$ to be stable, one may use a regularized formula
\[\theta_{i,j}^{k+1} = \frac{(K_\sigma(u_{i}\circ  I)^T)_j+\varepsilon}{(K_\sigma u_{i}^T)_j+\varepsilon}.\]

One can apply Algorithm~\ref{alg3} to problem \eqref{minLBF} and the subproblem \eqref{subproblem2} for $u_{i,j}$ for each $j = 1, 2, \cdots, p$ at each iteration can be explicitly solved by iterating the follows;
\begin{align}
u_{i,j}^{k+1} = \begin{cases}
1 & \textrm{if} \ \ i = \min\left\{\arg\min_m (\psi_m^k)_j \right\}, \\
0 & \textrm{otherwise}
\end{cases}
\end{align}
where 
\[(\psi_m^k)_j =  \mu \left[K_\sigma {\left( {(\theta_{m}^{k+1})}^2+I^2\right)}^T  - 2 I^T \circ K_\sigma {(\theta_{m}^{k+1})}^T\right]_j+\lambda \left(K_\tau{(1-2u_m^k)}^T\right)_{j}.\]

We apply the algorithm into several intensity inhomogeneous images as listed in Figure~\ref{fig2}. In the first row, we use blue curve to implicitly denote the initial guesses. The segment results are shown in the second row with red curves. In both rows, we set $u_{1,j}=1$ at the pixel $j$ if it is located inside the curves and $u_{2,j}=1$ at the pixel $j$ if it is located outside the curves. The third row lists the corresponding objective function value decaying curves for the iterate starting from the initial guess. Again, one can observe that the algorithm can find the solution in very few steps.
In five images, the parameters $(\tau,\lambda,\mu,\sigma)$ are set to be $(5,1,150,3)$, $(3,1,245,3)$, $(10,1,110,3)$, $(2,1,90,3)$, $(3,1,80,3)$, respectively.

In the table of Figure~\ref{fig2}, we compare Algorithm~\ref{alg3} and the level-set method used by Li et al. \cite{Chunming_Li_2008} in terms of the number of iterations for convergence and the CPU time. One can easily observe that Algorithm~\ref{alg3} converges in significantly fewer iterations and a shorter CPU time, demonstrating its higher efficiency. The results obtained by   Li et al. \cite{Chunming_Li_2008} are similar to those in Figure~\ref{fig2}. More details and comparison can be referred to \cite{wang2019iterative}.

\begin{figure}[ht!]
\centering
\includegraphics[width = 0.19\textwidth]{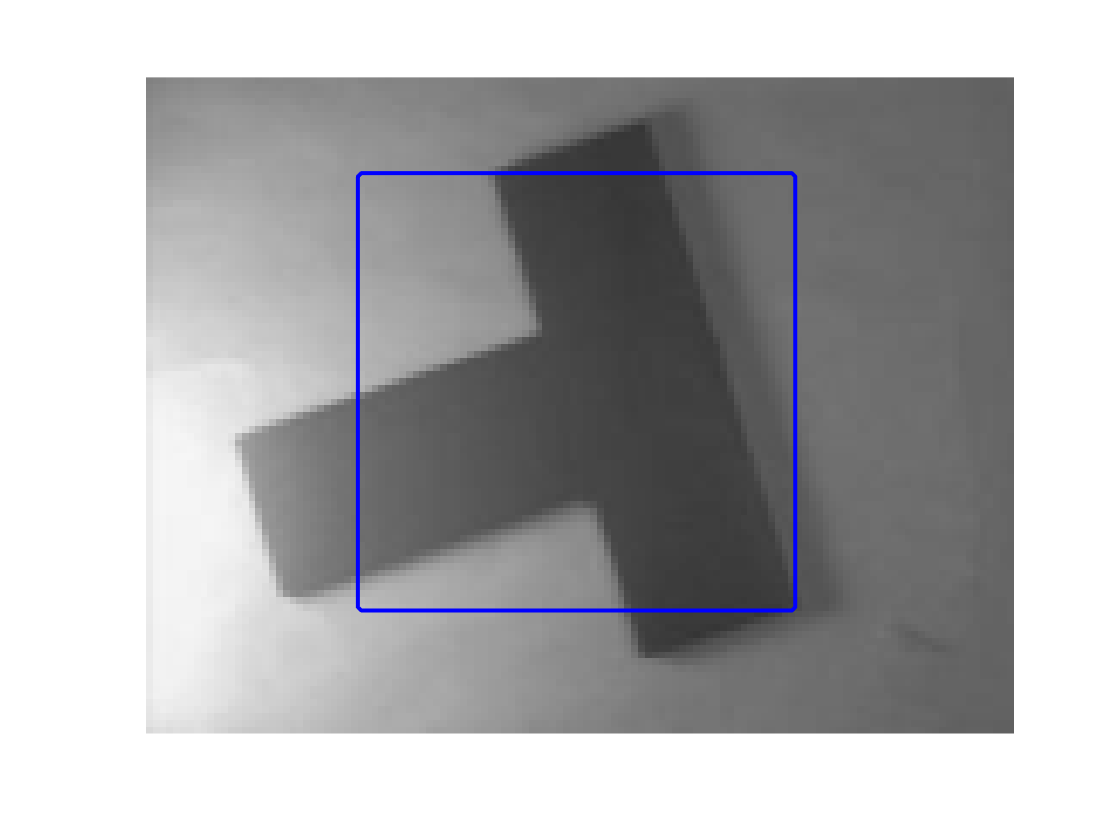}
\includegraphics[width = 0.19\textwidth]{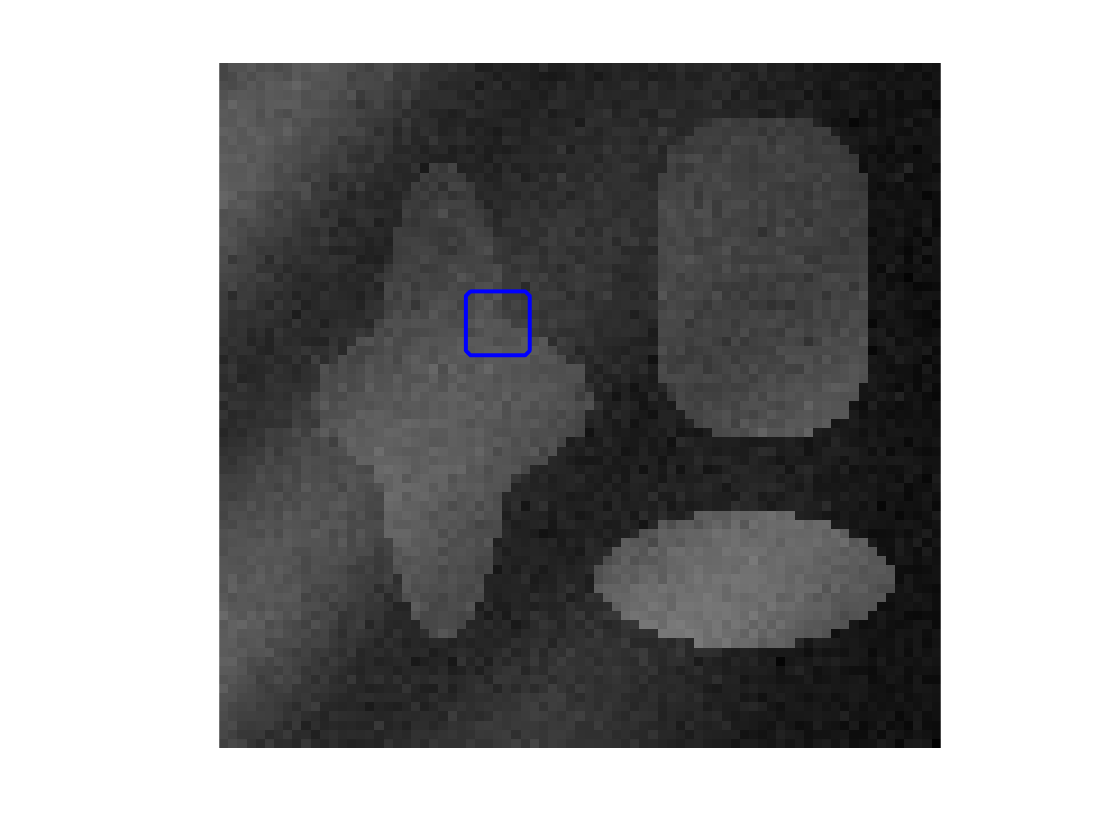}
\includegraphics[width = 0.19\textwidth]{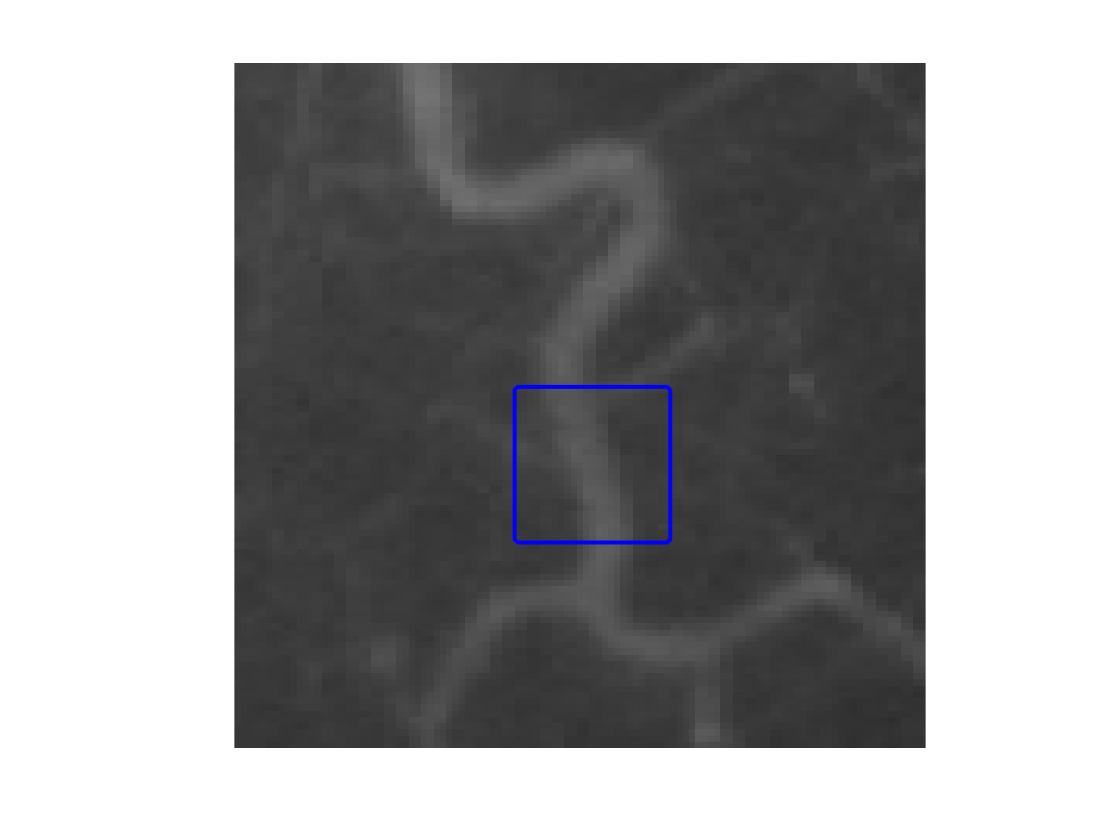}
\includegraphics[width = 0.19\textwidth]{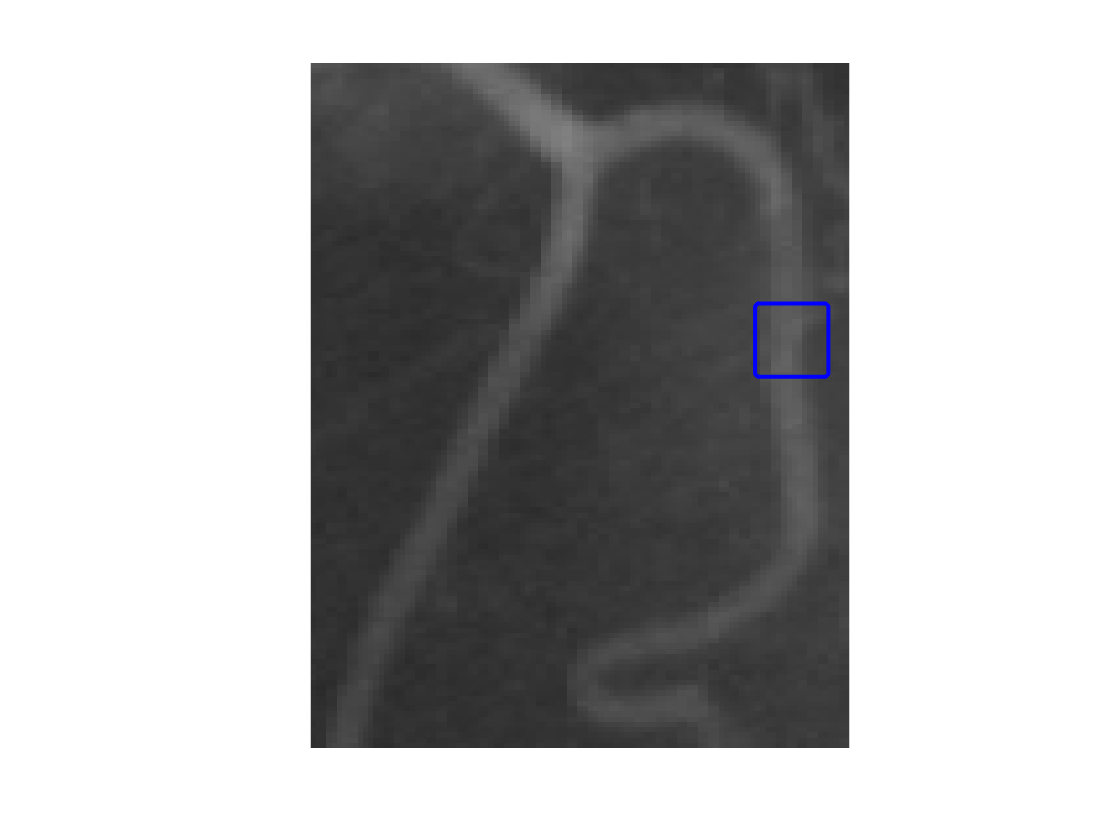}
\includegraphics[width = 0.19\textwidth]{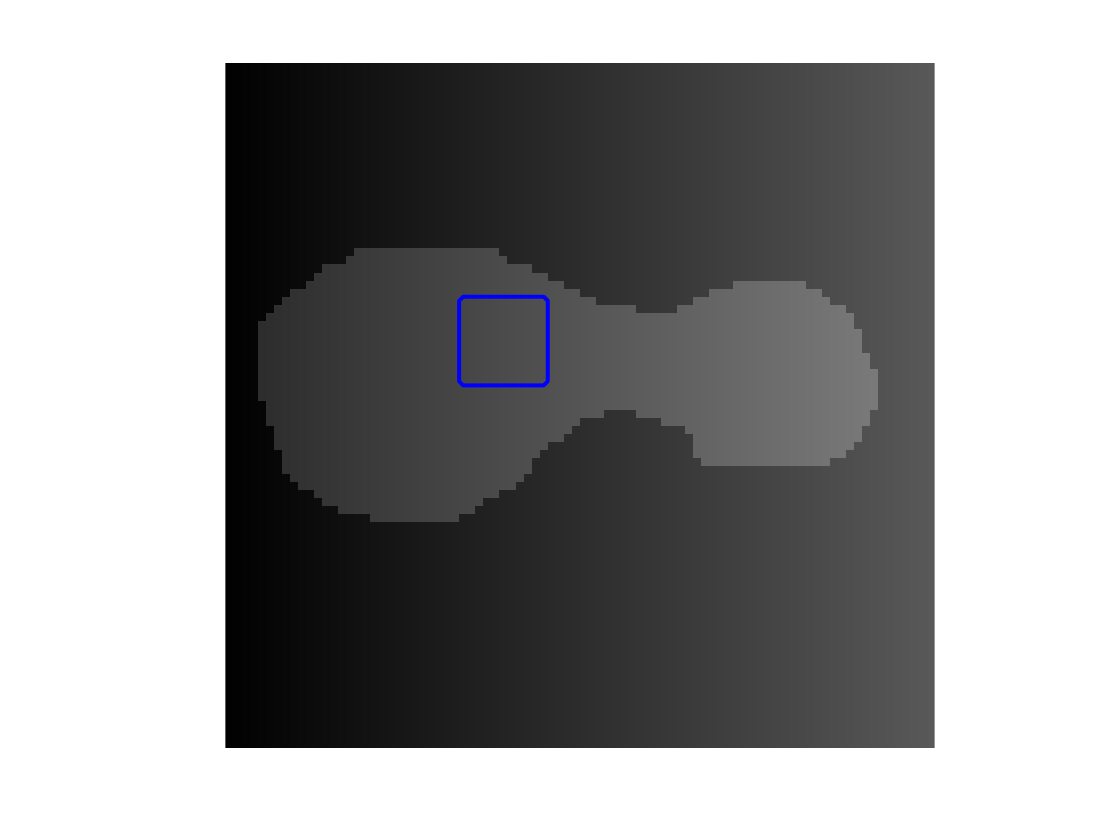}\\
\includegraphics[width = 0.19\textwidth]{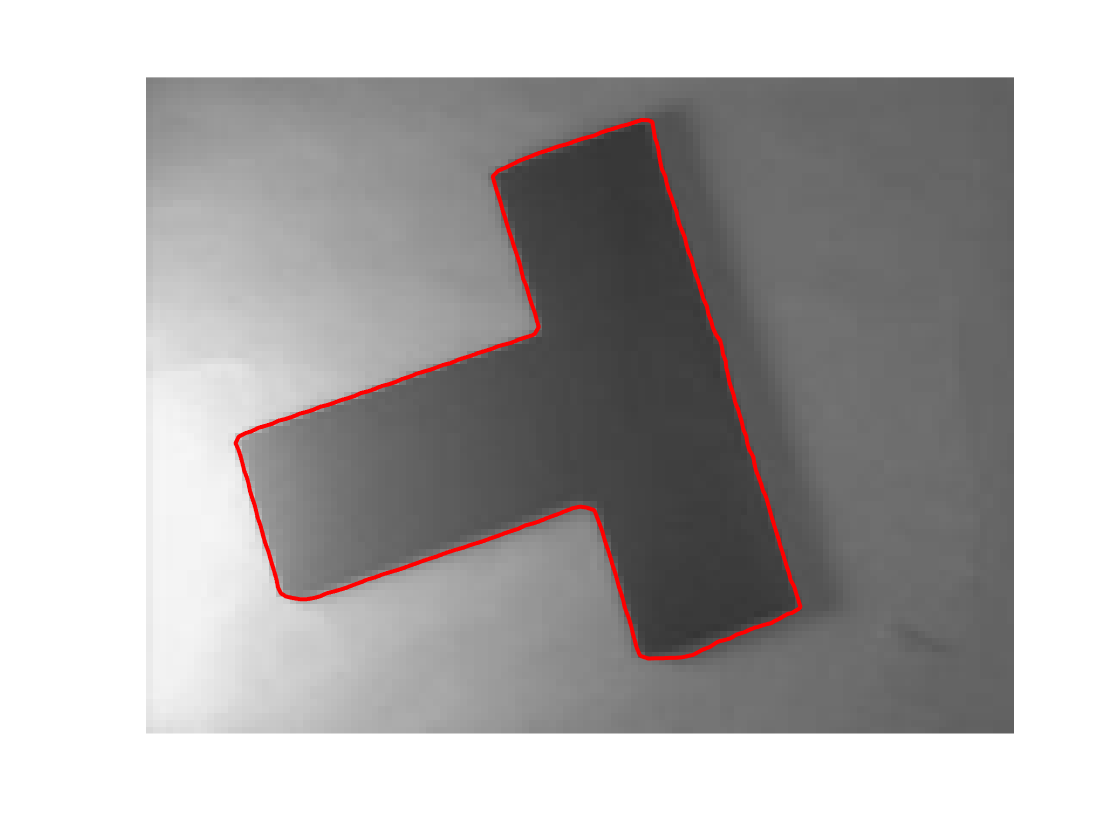}
\includegraphics[width = 0.19\textwidth]{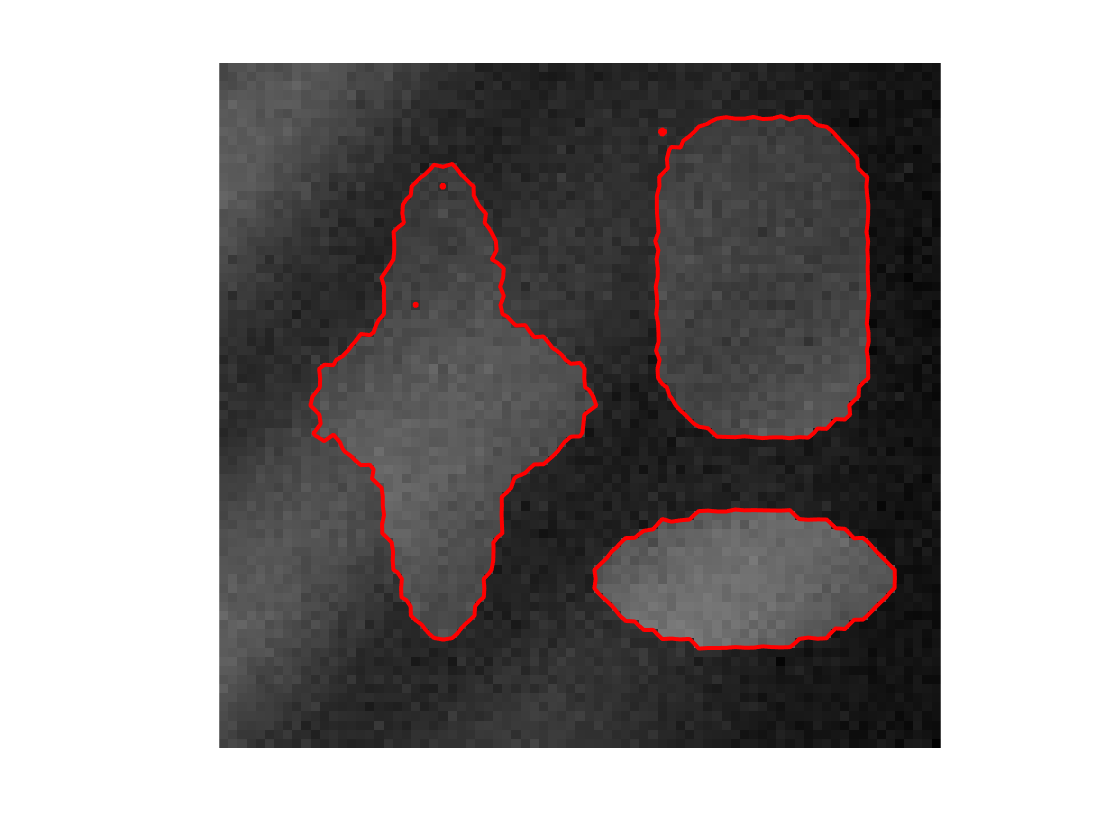}
\includegraphics[width = 0.19\textwidth]{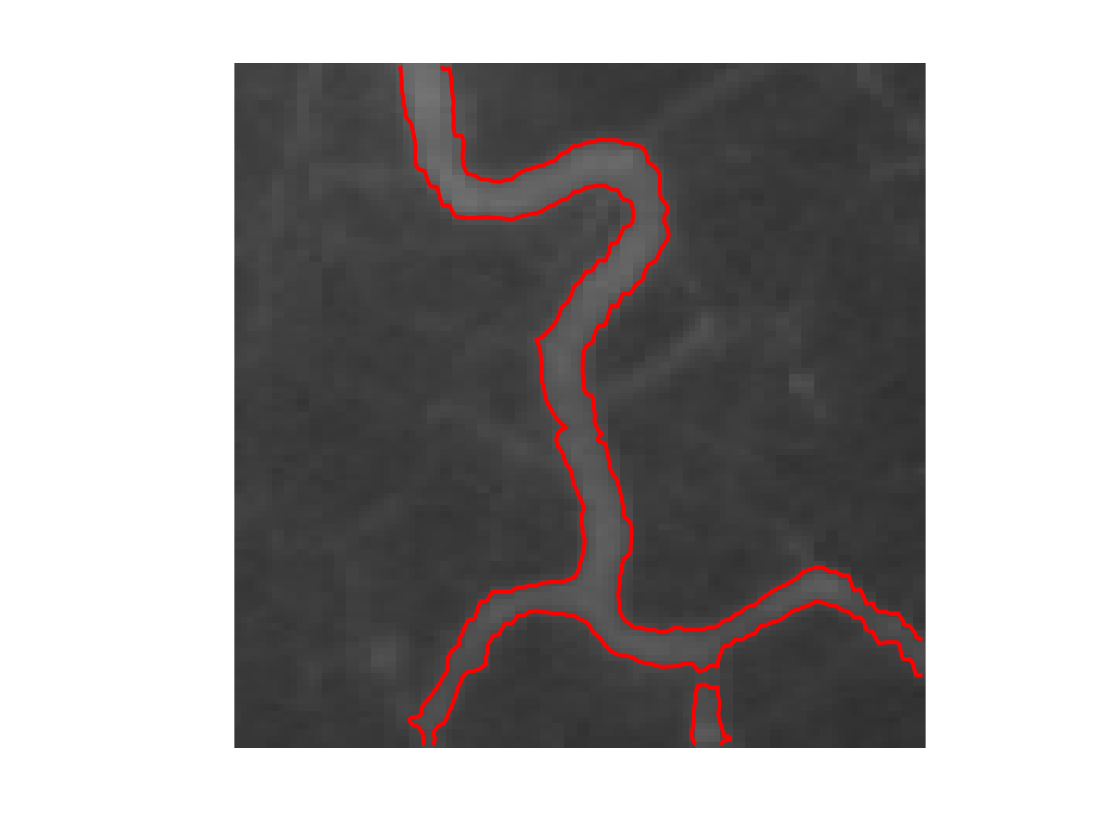}
\includegraphics[width = 0.19\textwidth]{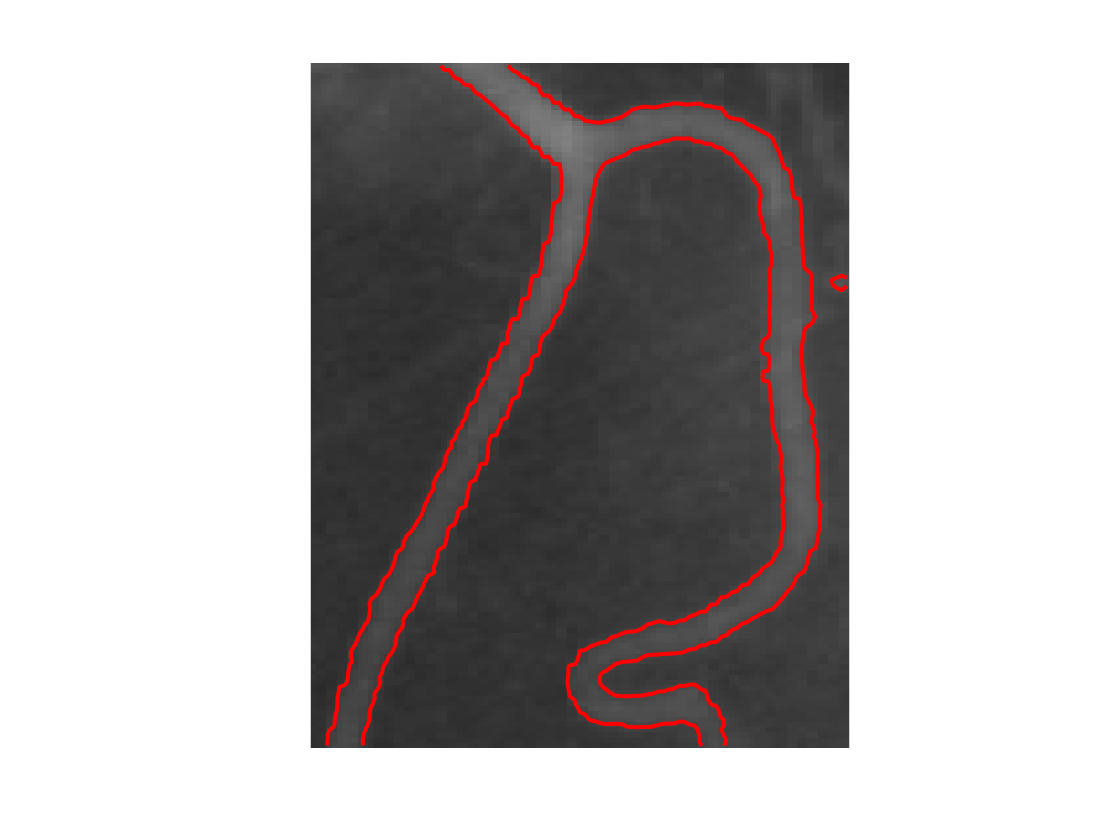}
\includegraphics[width = 0.19\textwidth]{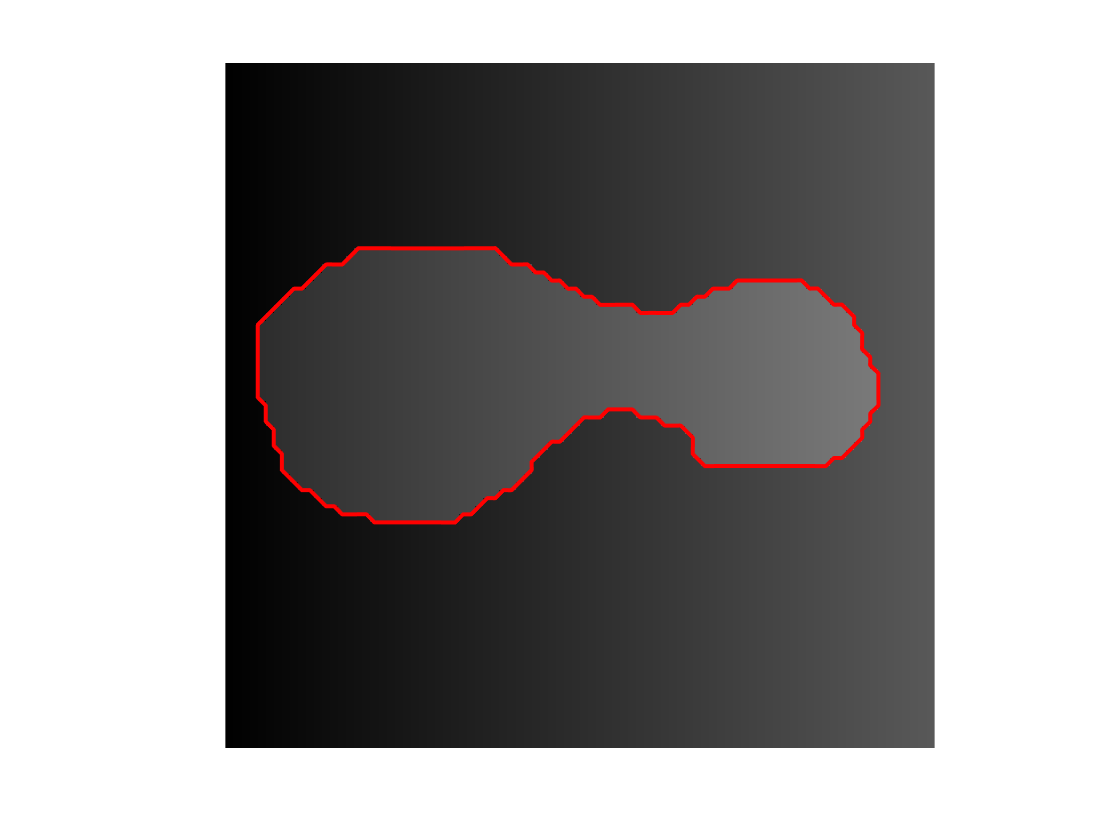}\\
\includegraphics[width = 0.19\textwidth]{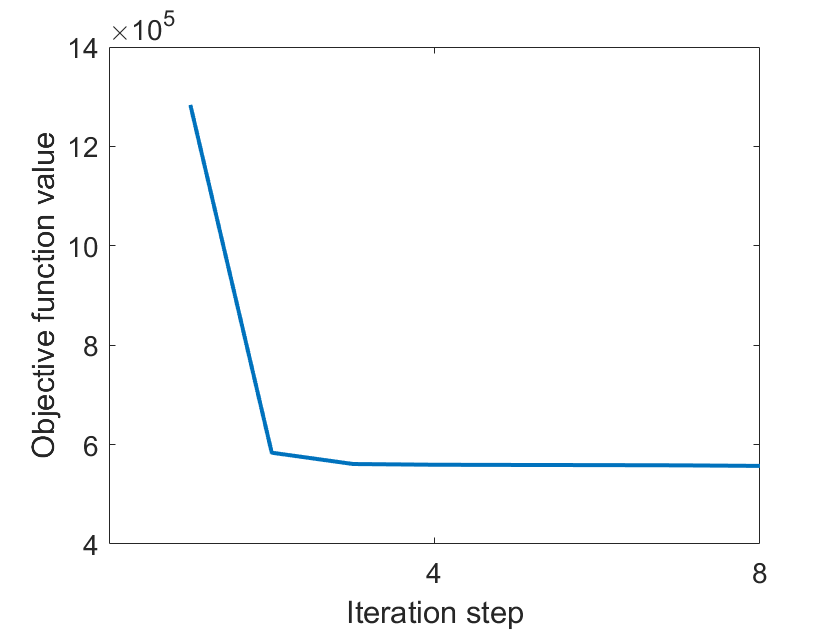}
\includegraphics[width = 0.19\textwidth]{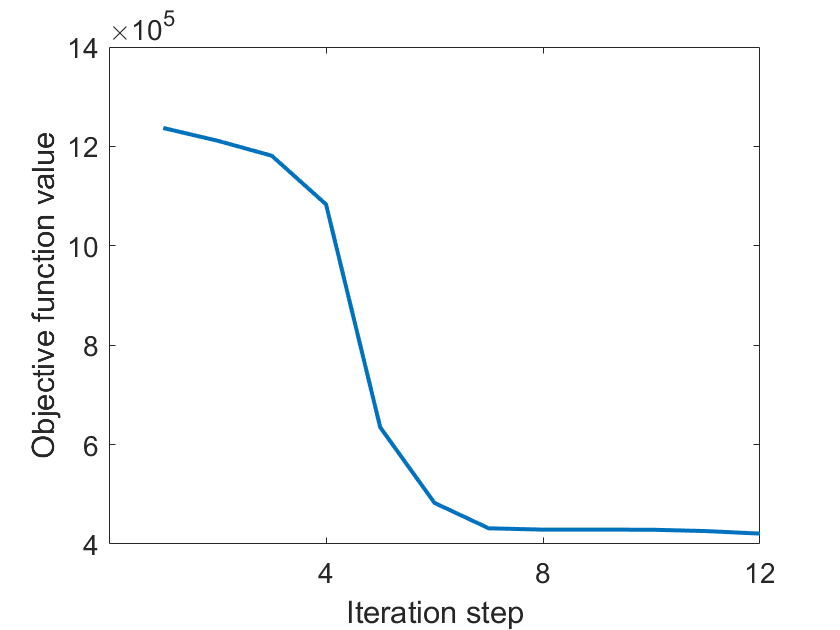}
\includegraphics[width = 0.19\textwidth]{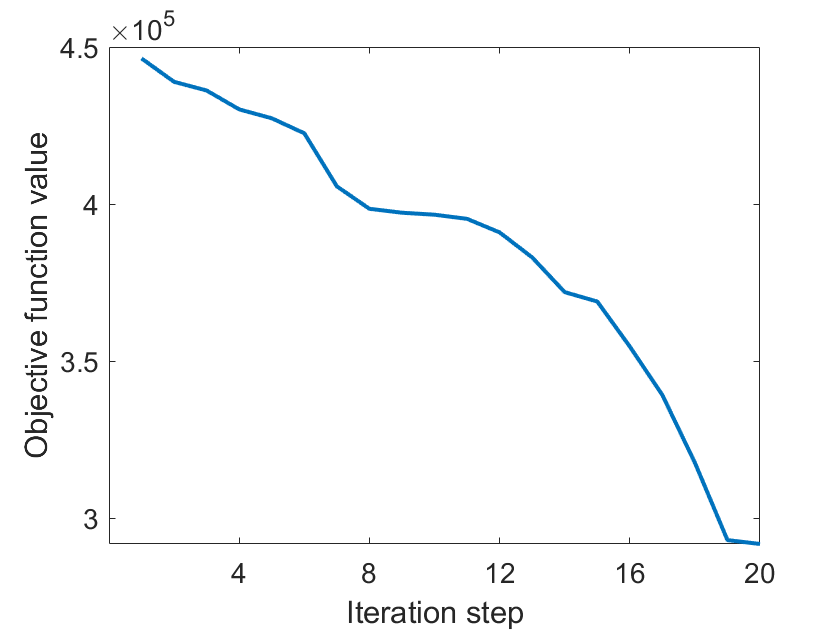}
\includegraphics[width = 0.19\textwidth]{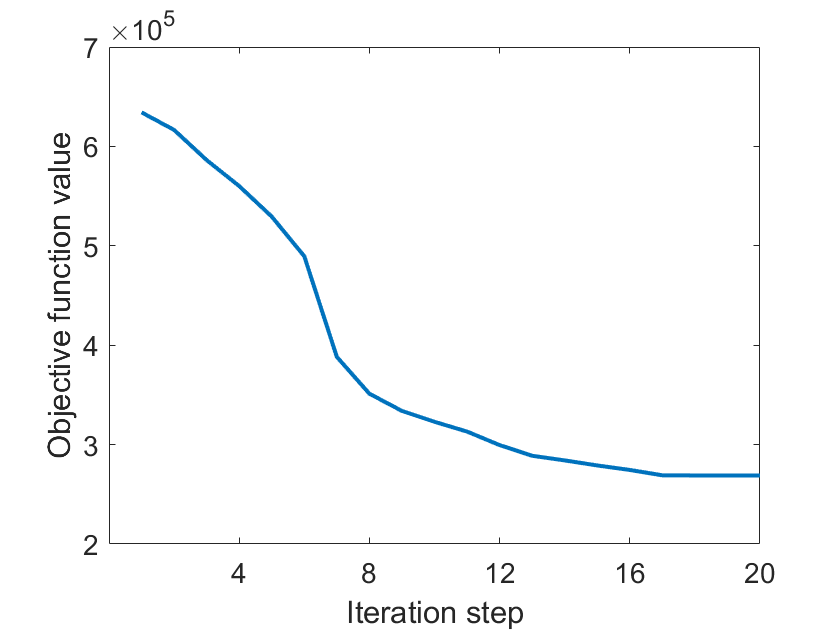}
\includegraphics[width = 0.19\textwidth]{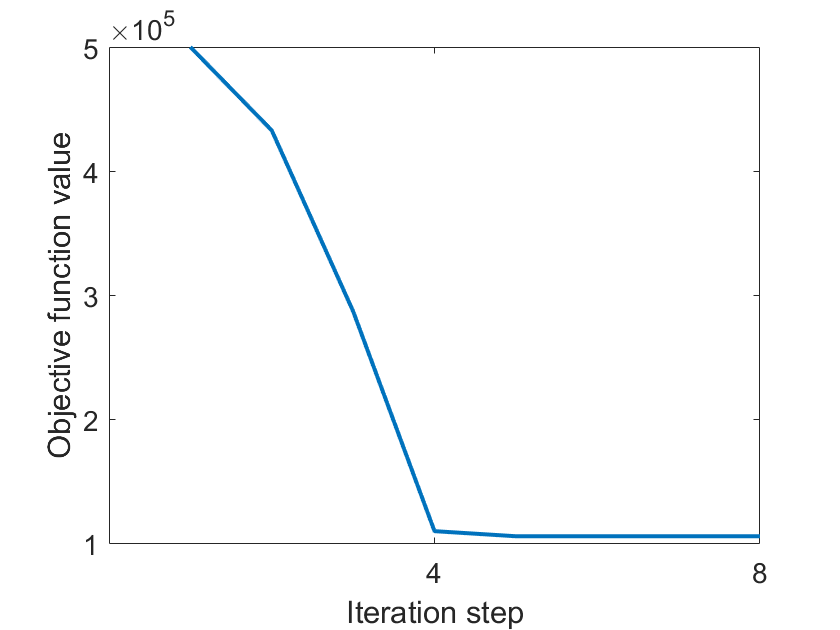}\\

\ \\
{\begin{tabular}{c|c|c|c|c|c|c}
\hline
\# of & Algorithm~\ref{alg3} & 6 & 12& 19 & 17 & 5  \\ 
\cline{2-7}
 iterations & Method in \cite{Chunming_Li_2008} & 256 & 131 & 117 & 209& 29\\
\hline
\hline
CPU time & Algorithm~\ref{alg3} & 0.08 & 0.07& 0.15 & 0.14& 0.06 \\
\cline{2-7}
(second)& Method in \cite{Chunming_Li_2008} &0.59  &0.43  & 0.48 &1.03& 0.15\\
\hline
\end{tabular}

\caption{First row: the initial guesses for the iteration. Second row: the converged results. Third row: curves for the objective function values with respect to iteration step. In five images, the parameters $(\tau,\lambda,\mu,\sigma)$ are set to be $(5,1,150,3)$, $(3,1,245,3)$, $(10,1,110,3)$, $(2,1,90,3)$, $(3,1,80,3)$. Table: comparison of the number of iterations and CPU time between Algorithm~\ref{alg3} and the level-set method. See Section~\ref{sec:lif}.}\label{fig2}
}
\end{figure}

\section{Conclusion and discussions} \label{sec:con}

In this paper, we gave a rigorous study on the convergence of the ICTM for solving a general problem raised from the discretization of many interface related optimization problems, including image segmentation, topology optimization, surface reconstruction, and so on. We studied three different situations where different solutions of minimization can be obtained. To our knowledge, this is the first systematical study on the convergence of the ICTM, which verifies the empirical observation from numerical experiments.

Similar analysis could be extended to the iterative method for Dirichlet partition problems \cite{wang2022efficient}, where a concave objective functional is minimized on a $L^2$ circle. The algorithmic framework can also be applied into data classification problems when one considers corresponding operators on point clouds. In addition, because of the small number of iterations for the minimization, it is straightforward to combine the algorithm with deep neural networks via the deep unfolding/unrolling ideas to have a mechanism and data driven methods for problems in image processing, surface reconstruction and topology optimization.

\section*{Acknowledgments}
 Dr. Wang acknowledges support from National Natural Science Foundation of China grant (Grant No. 12101524), Guangdong Basic and Applied Basic Research Foundation (Grant No. 2023A1515012199) and Shenzhen Science and Technology Innovation Program (Grant No. JCYJ20220530143803007, RCYX20221008092843046). Dr. Zhang  acknowledges support from NSFC 12222106, Shenzhen Science and
Technology Program (No. RCYX20200714114700072) and Guangdong Basic and Applied Basic Research Foundation 2022B1515020082.

\bibliographystyle{plain}
\bibliography{refs}


\end{document}